\documentclass[a4paper,11pt,reqno,oneside]{amsart}
\usepackage{amsmath,amssymb,amsthm,mathtools}
\usepackage{enumitem,xcolor}
\usepackage[scale=0.75,hcentering]{geometry}
\usepackage{cite}
\usepackage{xurl}
\usepackage{microtype,aliascnt,hyperref}
\usepackage[capitalize,nameinlink,noabbrev]{cleveref}
\makeatletter
\@ifundefined{subjclassname@2020}{\@namedef{subjclassname@2020}{%
\textup{2020} Mathematics Subject Classification}}{}
\makeatother
\definecolor{cobalt}{rgb}{0.0,0.28,0.67}
\hypersetup{bookmarksopen=true,
pdftitle={Rank and Border Rank Additivity of Complex Tensors with a Hankel Summand}, 
pdfauthor={Jianze Li},
pdfsubject={Exact and border rank additivity for complex Hankel tensors},
pdfkeywords={Hankel tensor, CP rank, border rank, direct sum, apolarity},
colorlinks=true,linkcolor=cobalt,citecolor=cobalt,urlcolor=cobalt}
\theoremstyle{plain}
\newtheorem{theorem}{Theorem}[section]
\newaliascnt{proposition}{theorem}
\newtheorem{proposition}[proposition]{Proposition}
\aliascntresetthe{proposition}
\newaliascnt{problem}{theorem}
\newtheorem{problem}[problem]{Open Problem}
\aliascntresetthe{problem}
\newaliascnt{lemma}{theorem}
\newtheorem{lemma}[lemma]{Lemma}
\aliascntresetthe{lemma}
\newaliascnt{corollary}{theorem}
\newtheorem{corollary}[corollary]{Corollary}
\aliascntresetthe{corollary}
\theoremstyle{remark}
\newaliascnt{remark}{theorem}
\newtheorem{remark}[remark]{Remark}
\aliascntresetthe{remark}
\newaliascnt{example}{theorem}
\newtheorem{example}[example]{Example}
\aliascntresetthe{example}
\crefname{theorem}{Theorem}{Theorems}
\crefname{proposition}{Proposition}{Propositions}
\crefname{lemma}{Lemma}{Lemmas}
\crefname{corollary}{Corollary}{Corollaries}
\crefname{remark}{Remark}{Remarks}
\crefname{example}{Example}{Examples}
\crefname{problem}{Open Problem}{Open Problems}
\newcommand{\tens}[1]{\boldsymbol{#1}}
\newcommand{\matr}[1]{\mathbf{#1}}
\newcommand{\vect}[1]{\boldsymbol{#1}}
\newcommand{\set}[1]{\mathcal{#1}}
\newcommand{\eqdef}{\mathrel{\triangleq}}
\newcommand{\CC}{\mathbb{C}}
\newcommand{\br}{\underline r}
\newcommand{\bs}{\underline s}
\DeclareMathOperator{\Sym}{\mathsf{Sym}}
\DeclareMathOperator{\Span}{\mathsf{span}}
\DeclareMathOperator{\rank}{\mathsf{rank}}
\DeclareMathOperator{\codim}{\mathsf{codim}}
\DeclareMathOperator{\Stab}{\mathsf{Stab}}
\numberwithin{equation}{section}
\allowdisplaybreaks[2]
\setlist[enumerate]{label=\textup{(\roman*)},leftmargin=*}
\title[Rank and Border Rank Additivity of Complex Tensors with a Hankel Summand]
{Rank and Border Rank Additivity of Complex Tensors with a Hankel Summand}
\author{Jianze Li}
\address{School of Science, Sun Yat-sen University, Shenzhen, Guangdong, China}
\email{lijianze@mail.sysu.edu.cn}
\date{\today}
\subjclass[2020]{15A69, 14N07, 13H10, 68Q17}
\keywords{Hankel tensor, CP rank, symmetric rank, direct sum,
border rank, apolarity, Frobenius algebra, Koszul flattening}

\begin{document}
\begin{abstract}
We prove that the CP rank of a complex Hankel tensor of order
$m\ge3$ is additive with that of every complex tensor of the same
order. Together with the known equality of CP and symmetric ranks
for complex Hankel tensors, this yields symmetric rank additivity
whenever the other summand has equal CP and symmetric ranks.
We also prove that ordinary and symmetric border ranks coincide
and are additive on direct sums of complex Hankel tensors
of any common order at least three. Their common value is the sum
of the middle binary catalecticant ranks. 
For order three, we further prove border-rank additivity when the
other summand has border rank at most three, or is symmetric with
at most five essential variables. 
\end{abstract}

\maketitle
\enlargethispage{3pt}

\section{Introduction}\label{sec:intro}

\subsection{Tensor CP rank, symmetric rank, and direct sums}
Tensor CP rank is the minimum number of simple tensors needed to
represent a tensor \cite{hitchcock1927expression,kruskal1977three,kolda2009tensor,landsberg2012tensors}. 
For symmetric tensors, requiring the simple summands to be symmetric
gives the symmetric rank, or equivalently the Waring rank of the
associated homogeneous polynomial
\cite{comon2008symmetric,landsberg2012tensors}.
In this paper, all vector spaces are finite-dimensional over
$\CC$, and all tensor ranks are computed over $\CC$.

In all rank definitions below, the minimum is taken over
$q\in\mathbb Z_{\ge0}$.
Let $m\ge2$ and let $\set{A}_1,\ldots,\set{A}_m$ be finite-dimensional vector spaces.
For a tensor $\tens{T}\in\bigotimes_{i=1}^m\set{A}_i$, its CP rank is
\[
r(\tens{T})\eqdef
\min\left\{q:\tens{T}=\sum_{h=1}^q
\vect{a}_{h,1}\otimes\cdots\otimes\vect{a}_{h,m},\
\vect{a}_{h,i}\in\set{A}_i\right\}.
\]
For tensors $\tens{T}_1\in\bigotimes_{i=1}^m\set{A}_i$ and $\tens{T}_2\in\bigotimes_{i=1}^m\set{B}_i$,
the direct sum $\tens{T}_1\oplus\tens{T}_2$ belongs to
$\bigotimes_{i=1}^m(\set{A}_i\oplus\set{B}_i)$.
Its two diagonal blocks are $\tens{T}_1$ and $\tens{T}_2$, and all
mixed blocks vanish. Concatenating minimum CP decompositions gives
$r(\tens{T}_1\oplus\tens{T}_2)\le r(\tens{T}_1)+r(\tens{T}_2)$.
For a symmetric tensor $\tens{S}\in\Sym^m(\set{A})$, its symmetric rank is
\[
s(\tens{S})\eqdef
\min\left\{q:\tens{S}=\sum_{h=1}^q
\lambda_h\vect{a}_h^{\otimes m},\
\lambda_h\in\CC,\ \vect{a}_h\in\set{A}\right\}.
\]
Thus $r(\tens{S})\le s(\tens{S})$, and symmetric rank is also
subadditive under direct sums. 
We define the Strassen CP rank gap, Strassen symmetric rank gap,
and Comon rank gap by
\begin{align*}
\Delta_r(\tens{T}_1,\tens{T}_2)
&\eqdef r(\tens{T}_1)+r(\tens{T}_2)-r(\tens{T}_1\oplus\tens{T}_2),\\
\Delta_s(\tens{S}_1,\tens{S}_2)
&\eqdef s(\tens{S}_1)+s(\tens{S}_2)-s(\tens{S}_1\oplus\tens{S}_2),\ \
\Delta_c(\tens{S})\eqdef s(\tens{S})-r(\tens{S}).
\end{align*}
The latter two definitions concern symmetric tensors.
All three gaps are nonnegative. In particular, a Strassen rank gap
vanishes exactly when the corresponding rank is additive.
The Comon rank gap is also defined in
\cite{li2026comonrankgap} for real symmetric third-order tensors.
Here we use the analogous definition over $\CC$ for symmetric tensors of arbitrary order.

We now recall Strassen's direct sum conjecture. In the setting
of bilinear complexity, Strassen asked in 1973 whether CP rank
of third-order tensors is always additive under direct sums
\cite{strassen1973division}. 
Feig and Winograd studied the stronger
requirement that every minimum decomposition separate into the two
blocks \cite{feig1984directsum}; see also
\cite{shitov2019directsum}.
For tensors of order three, positive cases include a summand with
an essential mode dimension at most two
\cite{jaja1986validity}, as well as several small formats and ranks
\cite{buczynski2020additivity,rupniewski2024additivity}.
However, Shitov disproved the general conjecture over every infinite
field \cite{shitov2019directsum} in 2019.
As a symmetric analogue, Strassen's symmetric direct sum conjecture asks for additivity of symmetric rank
\cite{carlini2015symmetricstrassen}.
Known positive cases include sums of monomials in disjoint variables
\cite{carlini2012monomials}, binary forms and further families
\cite{carlini2015symmetricstrassen,carlini2018ecomputability},
and criteria based on lower bounds or Waring loci
\cite{teitler2015sufficient,carlini2017waringloci}.
Shitov's preprint \cite[Theorem~3.9]{shitov2023higher} reports counterexamples
to this version as well. See \cite{casarotti2020comon} for a survey
of the earlier developments. 

\subsection{Tensor border rank and symmetric border rank}
The sets of tensors of bounded CP rank need not be closed; consequently,
a tensor may be approximated arbitrarily closely by tensors of
strictly smaller rank \cite{de2008tensor,landsberg2012tensors}.
Border rank captures this limiting complexity and underlies
approximate bilinear algorithms and their applications to fast matrix
multiplication \cite{bini1979approximate,bini1980approximate}.
The border rank and symmetric border rank are defined by
\begin{align*}
\br(\tens{T})&\eqdef\min\{q:\tens{T}\in
   \overline{\{\tens{T}':r(\tens{T}')\le q\}}\},\\
\bs(\tens{S})&\eqdef\min\{q:\tens{S}\in
   \overline{\{\tens{S}'\in\Sym^m(\set{A}):s(\tens{S}')\le q\}}\}.
\end{align*}
The second definition assumes that $\tens{S}$ is symmetric.
The closures are Zariski closures in the corresponding complex
tensor spaces; equivalently, one may use limits in the Euclidean
topology. In the first definition the approximating tensors are
arbitrary, even when $\tens{T}$ is symmetric or Hankel.
Thus $\br(\tens{S})\le r(\tens{S})\le s(\tens{S})$ and $\br(\tens{S})\le\bs(\tens{S})\le s(\tens{S})$ whenever the symmetric
quantities are defined. 
Both border ranks are subadditive under direct sums. 
We similarly define their three
corresponding gaps as 
\begin{align*}
\underline{\Delta}_r(\tens{T}_1,\tens{T}_2)
&\eqdef\br(\tens{T}_1)+\br(\tens{T}_2)-\br(\tens{T}_1\oplus\tens{T}_2),\\
\underline{\Delta}_s(\tens{S}_1,\tens{S}_2)
&\eqdef\bs(\tens{S}_1)+\bs(\tens{S}_2)-\bs(\tens{S}_1\oplus\tens{S}_2),\ \
\underline{\Delta}_c(\tens{S})\eqdef\bs(\tens{S})-\br(\tens{S}).
\end{align*}
These gaps are nonnegative, and the latter two definitions concern symmetric tensors.

The border-rank analogue of Strassen's direct sum conjecture asks whether
$\underline{\Delta}_r(\tens{T}_1,\tens{T}_2)=0$ always holds.
In early work on approximate matrix multiplication,
Sch\"onhage disproved this assertion for third-order tensors
\cite{schonhage1981partial}; his construction applies over $\CC$. Christandl, Gesmundo, Micha\l ek, and
Zuiddam gave complex counterexamples in every order at least four
\cite{christandl2021border}.
Positive results include complex third-order direct sums whose total mode dimensions are at most four
\cite{buczynski2020additivity}.
The symmetric border-rank question asks instead whether
$\underline{\Delta}_s(\tens{S}_1,\tens{S}_2)=0$ for forms in disjoint
sets of variables over $\CC$.
Teitler proves additivity when a common catalecticant degree
attains each summand's symmetric border rank
\cite[Example~3.6]{teitler2015sufficient}; binary forms are included
by their middle catalecticants
\cite[Theorem~2.2]{nie2019hankel}.
To our knowledge, the general symmetric border-rank
assertion over $\CC$ remains open.

\subsection{Hankel tensors and related work}
A tensor $\tens{H}\in(\CC^n)^{\otimes m}$ is Hankel \cite{luque2003hankel,qi2015hankel,nie2019hankel} if there exist
complex numbers $h_0,\ldots,h_{m(n-1)}$ such that
$H_{i_1\ldots i_m}=h_{i_1+\cdots+i_m}$ for
$0\le i_1,\ldots,i_m<n$.
The sequence $(h_0,\ldots,h_{m(n-1)})$ is called the
generating sequence of $\tens{H}$.
Every Hankel tensor is symmetric; we denote their space by
$\set{H}_m(n)$.
Early work studied Hankel hyperdeterminants and moment integrals
\cite{luque2003hankel}.
Structural developments include strong and complete Hankel tensors,
Vandermonde decompositions, and positivity over $\mathbb R$
\cite{qi2015hankel,xu2016hankel}, as well as inheritance properties
and sum-of-squares decompositions \cite{ding2017inheritance}.
Applications and algorithms include exponential data fitting
and fast Hankel tensor--vector products
\cite{papy2005exponential,ding2015fast}.
Related Hankel-matrix and moment-algebra methods connect tensor
decomposition with Artinian Gorenstein algebras and
polynomial--exponential representations
\cite{brachat2010symmetric,mourrain2018moments}.
Recent work addresses decomposition on rational varieties,
tensor completions for cactus rank, and multiplication-operator
methods for polynomial roots and tensor decomposition
\cite{bechere2026rational,bernardi2026hankel,barrilli2026eigensolvers}.

For CP rank and symmetric rank over $\CC$, Nie and Ye gave an algorithm
for computing the Vandermonde rank
\cite{nie2019hankel}, 
and proved rank equality for generic Hankel tensors of even
order and of order three
\cite{nie2019hankel},
and formulated Comon's conjecture for all complex Hankel tensors
\cite[Conjecture~6.3]{nie2019hankel}.
More recently, 
Colbrook's repository manuscript extends generic rank equality
to the remaining odd orders
\cite{colbrook2026generic}.
Holden's repository manuscript gives an exact formula and proves
that $\Delta_c(\tens{H})=0$ for every complex Hankel tensor
\cite{holden2026exact}. 
For border rank, Nie and Ye prove the single-tensor equalities
in even order and in odd order
$m=2q+1$ when the apolar degree $\rho$ satisfies
$\rho\le1+q(n-1)$ \cite{nie2019hankel}.
Holden establishes the single-tensor border-rank equalities
in every order at least three \cite{holden2026exact}.

\subsection{Contributions}
In this paper, we study Strassen's direct sum conjecture with a complex Hankel summand. 
Our main contributions include three parts. First, we prove CP rank
additivity for a complex Hankel tensor and an arbitrary complex
tensor of the same order, for every order at least three.
The proof combines a general criterion
for direct-sum additivity with a uniform lower bound on the
CP rank of a Hankel tensor after restricting one input.
Consequently, symmetric rank is also additive whenever the
partner has equal CP and symmetric ranks; in particular, this
applies to two Hankel summands. 
Second, we prove that ordinary and symmetric border ranks
coincide and are additive for direct sums of complex
Hankel tensors of any common order at least three.
Their common value is the sum of the middle binary catalecticant
ranks of the summands. 
Finally, for third-order tensors, we extend border-rank
additivity beyond Hankel partners in two directions.
The partner may be an arbitrary complex tensor of border rank
at most three, or a symmetric complex tensor with at most five
essential variables. For the latter class, we also establish
equality of ordinary and symmetric border ranks, without an
additional border-rank restriction. Therefore, both border ranks
are additive in the symmetric case.
These extensions reuse the Hankel matrix bounds and the
common-factor construction, supplemented by sharper bounds
for the partner.

\subsection{Organization}
The paper is organized as follows. 
The main results are stated in \cref{sec:main-results}.
\Cref{sec:criterion} gives the common gap identity and a criterion
for CP rank additivity.
\Cref{sec:moment} collects the shared apolar and moment data,
the coefficient pairing, and the known exact-rank formula.
\Cref{sec:partition,sec:restriction} prove the partition and
restriction bounds, and \cref{sec:restriction} concludes with
the proofs of \cref{thm:main,cor:symmetric,cor:pair}.
\Cref{sec:border} proves border additivity using the shared
algebraic data and a joint matrix certificate.
\Cref{sec:cubic-bounds} treats third-order partners of small border
rank and establishes a five-slice Hankel bound.
\Cref{sec:cubic-symmetric} then treats symmetric partners with at
most five essential variables and completes the third main proof.
Finally, \cref{sec:conclusion} gives the finite-sum consequences,
rank differences, and examples, while \cref{sec:outlook} states
the remaining general question.

\section{Main results}\label{sec:main-results}

\subsection{CP and symmetric rank additivity}
The following theorem proves CP rank additivity with every complex
tensor of the same order, for all orders at least three.
\begin{theorem}[CP rank additivity with a Hankel summand]
\label{thm:main}
Let $m\ge3$, let $\tens{H}\in\set{H}_m(n)$, and let
$\tens{T}\in\set{B}_1\otimes\cdots\otimes\set{B}_m$.
Then
\[
r(\tens{H}\oplus\tens{T})=r(\tens{H})+r(\tens{T}).
\]
Equivalently, we have $\Delta_r(\tens{H},\tens{T})=0$.
\end{theorem}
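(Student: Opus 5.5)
The plan is the substitution (slice-elimination) method, in the form of a general direct-sum criterion. The criterion asks for a restriction-robust lower bound on the rank of $\tens{H}$ and then peels off the partner's rank.

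\textbf{Step 1: the criterion.} Write $\tens{S}=\tens{H}\oplus\tens{T}$ and fix a minimal decomposition $\tens{S}=\sum_{h=1}^{q}\vect{a}_{h,1}\otimes\cdots\otimes\vect{a}_{h,m}$. Choose a mode, say mode $1$, and split each $\vect{a}_{h,1}=\vect{u}_h+\vect{v}_h$ with $\vect{u}_h\in\CC^n$ and $\vect{v}_h\in\set{B}_1$. Contract the first mode against a generic linear functional on $\set{B}_1\oplus\CC^n$ that kills a chosen subspace. Projecting onto the $\set{B}$-block shows that at least $r(\tens{T})$ of the terms have a nonzero $\set{B}$-component in every mode. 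The $\CC^n$-components of the remaining terms, together with corrections coming from the mixed terms, give a decomposition of a tensor obtained from $\tens{H}$ by restriction. Concretely, choose a functional on $\CC^n$ annihilating the first-mode $\CC^n$-parts of the terms charged to $\tens{T}$. The standard argument then yields
\[
r(\tens{H}\oplus\tens{T})\ \ge\ r(\tens{T})+\min_{W}\, r\bigl(\tens{H}|_{W}\bigr)+(\text{number of killed directions}),
\]
where $W$ ranges over subspaces of $\CC^n$ of small codimension in one input. I would formulate this as an exact gap identity. The gap $\Delta_r(\tens{H},\tens{T})$ is bounded by $\max_k\bigl(r(\tens{H})-k-\min_{\codim W=k} r(\tens{H}|_{W\text{ in mode }1})\bigr)$. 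This reduces the theorem to the following statement about $\tens{H}$ alone.

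\textbf{Step 2: the restriction bound.} The reduction requires
\[
r\bigl(\tens{H}|_{W}\bigr)\ \ge\ r(\tens{H})-\codim W
\qquad\text{for every subspace } W\subseteq\CC^n \text{ in one mode}.
\]
I would prove this using the exact-rank formula for Hankel tensors, taken from the moment/apolarity data (the binary-form Waring rank via the apolar ideal generated in degrees $\rho$ and $m(n-1)+2-\rho$). Restricting mode $1$ to $W$ means contracting $\tens{H}$ with $\codim W$ linear forms on the first factor. I would view the contraction as acting on the generating sequence, or equivalently as applying differential operators to the associated binary form $f$ of degree $m(n-1)$. The resulting tensor still contains Hankel-type blocks in the other $m-1\ge2$ modes. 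Its rank is at least the rank of a catalecticant (Hankel matrix) flattening built from modes $2,\dots,m$. Since each contraction lowers the relevant catalecticant rank by at most one, a partition bound splitting the modes into two groups should give the lower bound $r(\tens{H})-\codim W$. This is where $m\ge3$ is used: at least two modes remain to form a large flattening.

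\textbf{Main obstacle.} The difficulty is the non-generic case, where $r(\tens{H})=m(n-1)+2-\rho$ exceeds the border rank $\rho$ (the "wild" case). Flattening bounds only see $\rho$. For this case I would adapt the Sylvester/apolarity argument of the exact-rank proof. Suppose the restricted tensor had a decomposition of length $<r(\tens{H})-\codim W$. Adding $\codim W$ rank-one terms would reconstruct a decomposition of $\tens{H}$ itself, and from it a CP decomposition whose first-mode vectors span too little. Using the equality of CP and symmetric rank for Hankel tensors, this should force a squarefree apolar form of degree below $m(n-1)+2-\rho$, contradicting the structure of the apolar ideal. Making this "repair" step rigorous is the crux of the proof.
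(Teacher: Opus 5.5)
There is a genuine gap. The inequality your Step~2 is meant to prove, $r(\tens{H}|_W)\ge r(\tens{H})-\codim W$, is false, so the single-mode reduction of Step~1 cannot close.

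Take $m=3$, $n=2$ and the Hankel tensor with $h_1=1$ and all other moments zero, that is, $\tens{H}=\vect{e}_1\otimes\vect{e}_0\otimes\vect{e}_0+\vect{e}_0\otimes\vect{e}_1\otimes\vect{e}_0+\vect{e}_0\otimes\vect{e}_0\otimes\vect{e}_1$, of rank $3$. Restricting the first input to $\CC\vect{e}_1^*$ (codimension one) leaves $\vect{e}_0\otimes\vect{e}_0$, of rank $1<3-1$. Even $W=0$ would force $r(\tens{H})\le n$. This fails here, and whenever $\rho(\tens{H})>n$, e.g.\ for generic order-three Hankel tensors with $n\ge3$. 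The correct loss per unit of codimension is $m-1$ (\cref{prop:restriction}), and \cref{ex:sharp} shows it cannot be decreased.

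Your Step~1 bound is itself correct; it is the one-mode analogue of \cref{cor:error}. But whenever $r(\tens{H})>n$ it yields only $\Delta_r(\tens{H},\tens{T})\le\varepsilon$ with $\varepsilon\ge r(\tens{H})-n>0$.

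The missing idea is to eliminate terms in $m-1$ modes simultaneously, as in \cref{prop:criterion}. Fix a decomposition of $\tens{H}\oplus\tens{T}$ of length $L$. For a set $\set{S}$ of terms, quotient the controlled mode $j$ in which the $\CC^n$-components of $\set{S}$ span the least. The resulting loss $(m-1)\varrho_j(\set{S})$ is then at most $\sum_{i<m}\varrho_i(\set{S})$. The matroid union theorem now gives disjoint independent sets in modes $1,\dots,m-1$ with $r(\tens{H})$ indices in total. Linear maps fixing the partner cancel all of these terms, so $r(\tens{T})\le L-r(\tens{H})$.

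Your method for Step~2 would also not reach the needed bound. Catalecticant flattenings certify at most $\rho(\tens{H})$. The proposed ``repair'' rests on a misidentified rank: $r(\tens{H})$ is not the binary Waring rank $\rho$ or $m(n-1)+2-\rho$; that is only an upper bound via the Hankel embedding. The actual value is $U=\min\{(m-1)b-a-(m-2),\,(m-1)\rho-(m-2)\sigma\}$ from \cref{prop:upper}. For example, with $m=3$, $n\ge3$ and $g_H=t^2(t-1)$ one has $r(\tens{H})=4$, whereas $m(n-1)+2-\rho=3n-4$. So forcing a squarefree apolar form below degree $m(n-1)+2-\rho$ aims at the wrong quantity.

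The paper proves $r(\tens{H}|_{\set{K}})\ge U-(m-1)u$ algebraically, in four steps:
\begin{enumerate}
\item It writes $\tens{H}$ as $\lambda(p_1\cdots p_m)$ for a Frobenius functional on $\CC[t]/(g_H)$.
\item It divides the restricted input space by $\gcd(g_H,\set{K})$.
\item It applies an Alder--Strassen-type partition bound, obtained from the graph of a CP decomposition in $\set{A}\times\CC^R$ and its reduced stabilizer.
\item It controls partitions into short blocks with a block-count lemma for polynomial subspaces of bounded degree.
\end{enumerate}
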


Combining \cref{thm:main} with
$\Delta_c(\tens{H})=0$ from \cite{holden2026exact}
gives the following two consequences.

\begin{corollary}[Symmetric rank additivity]\label{cor:symmetric}
Let $m\ge3$, let $\tens{H}\in\set{H}_m(n)$, and let
$\tens{S}\in\Sym^m(\set{B})$ satisfy $\Delta_c(\tens{S})=0$.
Then
\[
s(\tens{H}\oplus\tens{S})=s(\tens{H})+s(\tens{S}).
\]
Moreover, we have $r(\tens{H}\oplus\tens{S})=s(\tens{H}\oplus\tens{S})$.
Thus $\Delta_r(\tens{H},\tens{S})=\Delta_s(\tens{H},\tens{S})
=\Delta_c(\tens{H}\oplus\tens{S})=0$.
\end{corollary}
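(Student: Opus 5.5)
The corollary should follow by sandwiching the symmetric rank of the direct sum between CP-rank quantities. The inputs are \cref{thm:main}, the equality $\Delta_c(\tens{H})=0$ from \cite{holden2026exact}, and the hypothesis $\Delta_c(\tens{S})=0$.

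First, $\tens{H}\oplus\tens{S}$ is a symmetric tensor in $\Sym^m(\CC^n\oplus\set{B})$. Indeed, $\tens{H}$ is symmetric because it is Hankel, $\tens{S}$ is symmetric by assumption, and all mixed blocks vanish. Hence $s(\tens{H}\oplus\tens{S})$ is defined.

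Next we get an upper bound from subadditivity. If $\tens{H}=\sum_h\lambda_h\vect{a}_h^{\otimes m}$ and $\tens{S}=\sum_k\mu_k\vect{b}_k^{\otimes m}$ are minimal symmetric decompositions, embed $\vect{a}_h$ as $(\vect{a}_h,0)$ and $\vect{b}_k$ as $(0,\vect{b}_k)$. This gives a symmetric decomposition of the direct sum, so
\[
s(\tens{H}\oplus\tens{S})\le s(\tens{H})+s(\tens{S}).
\]

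For the lower bound, use $r\le s$ and \cref{thm:main}. Since $\tens{S}$ lies in $\set{B}^{\otimes m}$, the theorem applies with $\tens{T}=\tens{S}$ and $\set{B}_i=\set{B}$. Then
\[
s(\tens{H}\oplus\tens{S})\ge r(\tens{H}\oplus\tens{S})=r(\tens{H})+r(\tens{S})=s(\tens{H})+s(\tens{S}),
\]
where the last equality uses $r(\tens{H})=s(\tens{H})$ from \cite{holden2026exact} and $r(\tens{S})=s(\tens{S})$ from the hypothesis.

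Combining the two bounds, the whole chain consists of equalities. This gives $s(\tens{H}\oplus\tens{S})=s(\tens{H})+s(\tens{S})$ and $r(\tens{H}\oplus\tens{S})=s(\tens{H}\oplus\tens{S})$. Reading off the definitions, $\Delta_s(\tens{H},\tens{S})=0$ and $\Delta_c(\tens{H}\oplus\tens{S})=0$, while $\Delta_r(\tens{H},\tens{S})=0$ is \cref{thm:main} itself.

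None of these steps is a real obstacle; all the difficulty lies in \cref{thm:main} and in the cited equality $\Delta_c(\tens{H})=0$. The only point that needs care is checking that the hypotheses of \cref{thm:main} hold with $\tens{T}=\tens{S}$, which they do because $\tens{S}$ is a tensor of the same order $m\ge3$.
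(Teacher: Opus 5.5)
Your proposal is correct and is essentially the paper's proof. The paper applies the gap-transfer identity \eqref{eq:gap-identity} with $\Delta_c(\tens{H})=0$, $\Delta_r(\tens{H},\tens{S})=0$ from \cref{thm:main}, and $\Delta_c(\tens{S})=0$, then uses nonnegativity to make both left-hand gaps vanish; unwinding that identity gives exactly your sandwich $s(\tens{H}\oplus\tens{S})\ge r(\tens{H}\oplus\tens{S})=r(\tens{H})+r(\tens{S})=s(\tens{H})+s(\tens{S})\ge s(\tens{H}\oplus\tens{S})$.
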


\begin{corollary}[Two Hankel summands]\label{cor:pair}
Let $m\ge3$, let $\tens{H}_1\in\set{H}_m(n)$, and let
$\tens{H}_2\in\set{H}_m(p)$. Then
\[
r(\tens{H}_1\oplus\tens{H}_2)=s(\tens{H}_1\oplus\tens{H}_2)
=s(\tens{H}_1)+s(\tens{H}_2).
\]
\end{corollary}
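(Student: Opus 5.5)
The plan is to derive \cref{cor:pair} by a squeeze: bound $s(\tens{H}_1\oplus\tens{H}_2)$ below by the CP rank, which \cref{thm:main} makes additive, and above by subadditivity of symmetric rank. The bridge between the two is the equality $\Delta_c(\tens{H})=0$ for every complex Hankel tensor from \cite{holden2026exact}. No new estimate is needed; it is all bookkeeping.

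First, $\tens{H}_2\in\set{H}_m(p)$ is Hankel and hence symmetric. Viewing $\tens{H}_2$ as an element of $\set{B}_1\otimes\cdots\otimes\set{B}_m$ with $\set{B}_i=\CC^p$, \cref{thm:main} with $\tens{H}=\tens{H}_1$ and $\tens{T}=\tens{H}_2$ gives
\[
r(\tens{H}_1\oplus\tens{H}_2)=r(\tens{H}_1)+r(\tens{H}_2).
\]
Second, \cite{holden2026exact} gives $\Delta_c(\tens{H}_1)=\Delta_c(\tens{H}_2)=0$, so $r(\tens{H}_i)=s(\tens{H}_i)$ for $i=1,2$.

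Third, combine the general inequality $r\le s$ for symmetric tensors, subadditivity of symmetric rank under direct sums, and the two facts above:
\[
r(\tens{H}_1\oplus\tens{H}_2)\le s(\tens{H}_1\oplus\tens{H}_2)\le s(\tens{H}_1)+s(\tens{H}_2)
=r(\tens{H}_1)+r(\tens{H}_2)=r(\tens{H}_1\oplus\tens{H}_2).
\]
All inequalities in this chain are therefore equalities, which is exactly the claimed identity. Here $\tens{H}_1\oplus\tens{H}_2$ is regarded in $\Sym^m(\CC^n\oplus\CC^p)$, so its symmetric rank makes sense.

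Equivalently, one may apply \cref{cor:symmetric} with $\set{B}=\CC^p$ and $\tens{S}=\tens{H}_2$, since its hypothesis $\Delta_c(\tens{S})=0$ is supplied by \cite{holden2026exact}. There is no real obstacle. The only point to check is that a Hankel tensor lies in $\Sym^m(\CC^p)$, so that it is admissible both as the arbitrary partner $\tens{T}$ in \cref{thm:main} and as the symmetric partner in \cref{cor:symmetric}.
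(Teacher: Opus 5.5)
Your proof is correct and is essentially the paper's argument: the paper feeds $\Delta_r(\tens{H}_1,\tens{H}_2)=0$ from \cref{thm:main} and $\Delta_c(\tens{H}_1)=\Delta_c(\tens{H}_2)=0$ from \cref{prop:upper} (Holden's formula) into the gap identity \eqref{eq:gap-identity}, and nonnegativity then forces $\Delta_s(\tens{H}_1,\tens{H}_2)=\Delta_c(\tens{H}_1\oplus\tens{H}_2)=0$. Your explicit squeeze $r(\tens{H}_1\oplus\tens{H}_2)\le s(\tens{H}_1\oplus\tens{H}_2)\le s(\tens{H}_1)+s(\tens{H}_2)=r(\tens{H}_1)+r(\tens{H}_2)=r(\tens{H}_1\oplus\tens{H}_2)$ is that identity written out, and your alternative route through \cref{cor:symmetric} with a Hankel partner is exactly how the paper phrases it.
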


\subsection{Border rank of Hankel sums}

For a nonzero $\tens{H}\in\set{H}_m(n)$, put $D=m(n-1)$
and define its middle binary catalecticant rank
\cite{nie2019hankel} by
\begin{equation}\label{eq:rho-middle}
\rho(\tens{H})\eqdef\rank
 (h_{i+j})_{0\le i\le\lfloor D/2\rfloor,\ 0\le j\le\lceil D/2\rceil}.
\end{equation}
Set $\rho(\tens{0})=0$. For a nonzero scalar tensor, the definition
gives $\rho=1$. We now establish the following theorem.

\begin{theorem}[Border rank of two Hankel summands]\label{thm:border}
Let $m\ge3$, let $\tens{H}_1\in\set{H}_m(n)$, and let
$\tens{H}_2\in\set{H}_m(p)$. Then
\begin{equation}\label{eq:border-finite}
\br(\tens{H}_1\oplus\tens{H}_2)=\bs(\tens{H}_1\oplus\tens{H}_2)
=\rho(\tens{H}_1)+\rho(\tens{H}_2).
\end{equation}
\end{theorem}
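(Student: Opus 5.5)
The statement reduces to a sandwich
\[
\rho(\tens{H}_1)+\rho(\tens{H}_2)\le\br(\tens{H}_1\oplus\tens{H}_2)\le\bs(\tens{H}_1\oplus\tens{H}_2)\le\bs(\tens{H}_1)+\bs(\tens{H}_2)=\rho(\tens{H}_1)+\rho(\tens{H}_2).
\]
The middle inequality is the general relation $\br\le\bs$ from the introduction. The third is subadditivity of symmetric border rank, obtained by concatenating symmetric approximating families for the two blocks. For the final equality I will use the single-tensor result $\bs(\tens{H})=\rho(\tens{H})$ from \cite{holden2026exact} (Nie--Ye in even order). If I want to avoid citing it, the upper bound $\bs(\tens{H})\le\rho(\tens{H})$ follows from the binary picture. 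A Hankel tensor of order $m$ and size $n$ corresponds to a binary form of degree $D=m(n-1)$ via the Veronese-type map $(x,y)\mapsto (x^{n-1},x^{n-2}y,\ldots,y^{n-1})$. A binary form of degree $D$ whose middle catalecticant has rank $\rho$ has binary border rank $\rho$ (Sylvester). Pushing a binary approximation $\sum\lambda_h\ell_h^D$ through this map gives symmetric rank-one Hankel terms $\lambda_h\vect{v}_h^{\otimes m}$ with $\vect{v}_h=(a_h^{n-1},\ldots,b_h^{n-1})$. The degenerate cases $\tens{H}=0$ and scalar tensors are trivial.

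The substance is the lower bound $\br(\tens{H}_1\oplus\tens{H}_2)\ge\rho(\tens{H}_1)+\rho(\tens{H}_2)$ for arbitrary (non-symmetric) approximations. I plan to use a flattening that is block diagonal on direct sums. Border rank is bounded below by the rank of any linear map $\tens{T}\mapsto M(\tens{T})$ whose value on simple tensors has rank at most one, or at most $c$ after normalising. So I need one linear matrix construction $M$ with three properties: it is defined for all tensors in $\bigotimes(\set{A}_i\oplus\set{B}_i)$; it satisfies $\rank M(\vect{a}_1\otimes\cdots\otimes\vect{a}_m)\le c$; and on a Hankel tensor it has rank $c\,\rho(\tens{H})$ up to a defect that I must show does not accumulate across the two blocks. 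The naive flattening $\set{A}_1^*\otimes\cdots\to\set{A}_k\otimes\cdots$ only reaches the catalecticant of degree about $k(n-1)$, not the middle $D/2$. This suffices in even order $m=2q$ (take $k=q$), and there the block-diagonal structure gives additivity directly: the flattening of $\tens{H}_1\oplus\tens{H}_2$ contains the two flattenings as diagonal blocks, and all mixed blocks vanish.

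In odd order $m=2q+1$, the middle catalecticant sits half a step beyond any ordinary flattening. For this case I would use a Koszul flattening in the style of Landsberg--Ottaviani, using the binary structure to linearize the extra half-mode. Concretely, I would treat one factor $\set{A}_m=\CC^n$ as carrying the shift structure of $\CC[x,y]_{n-1}$. I would then compose the flattening $\set{A}_1^*\otimes\cdots\otimes\set{A}_q^*\to\set{A}_{q+1}\otimes\cdots\otimes\set{A}_{2q}\otimes\set{A}_m$ with a multiplication-type map, so that the Hankel image becomes the catalecticant $(h_{i+j})$ of shape $\lfloor D/2\rfloor\times\lceil D/2\rceil$ tensored with a fixed matrix of rank $c$. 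For a general simple tensor, the rank of the image should be bounded by $c$, and the quotient bound then gives $\rho$.

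The main obstacle is making this Koszul construction work uniformly. It must remain a bona fide flattening for non-Hankel simple tensors, with rank bounded by the same $c$. It must also respect the direct sum, since the shift structure is attached to $\set{A}_m$ and $\set{B}_m$ separately. I would try applying the Koszul map blockwise on $\set{A}_m\oplus\set{B}_m$ and checking that the mixed blocks of the image vanish. Failing that, I would fall back on the paper's ``joint matrix certificate'' route: build one matrix polynomial whose rank is a lower semicontinuous lower bound for $\br$ and which is block diagonal on the sum, with diagonal blocks of ranks $\rho(\tens{H}_1)$ and $\rho(\tens{H}_2)$.
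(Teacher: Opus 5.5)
Your upper bound (binary border decompositions pushed through the Hankel embedding, then subadditivity) and your even-order lower bound (the $q\mid q$ flattening of $\tens{H}_1\oplus\tens{H}_2$ is block diagonal with blocks of rank $\rho(\tens{H}_1)$ and $\rho(\tens{H}_2)$) are correct and are what the paper does. The gap is the odd case $m=2q+1$, which you call the substance but leave as a plan. The first missing ingredient is an explicit compression with an \emph{exact} rank computation for one Hankel tensor. The paper groups the modes as $1,q,q$. In each $q$-group it keeps one coordinate of each index sum $0,\ldots,N-1$, with $N=q\ell+1$ and $\ell=n-1$. It maps the singleton factor to $\CC^3$ by $\vect{z}\mapsto(z_0,z_c,z_\ell)$ with $c=\lfloor\ell/2\rfloor$. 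The three slices are then $(h_{i+k+s_\nu})_{0\le i,k<N}$ with $(s_0,s_1,s_2)=(0,c,\ell)$. The block identity $\matr{K}(\tens{H}')=\matr{E}\left(\begin{smallmatrix}0&\matr{C}\\-\matr{C}^{\mathsf T}&0\end{smallmatrix}\right)\matr{E}^{\mathsf T}$, with $\matr{C}$ the middle catalecticant and $\matr{E}$ injective, gives rank exactly $2\rho(\tens{H})$. Your claim that the image ``becomes the catalecticant tensored with a fixed matrix of rank $c$'' asserts this conclusion without a map that realizes it. It is not automatic: every ordinary flattening of an odd-order Hankel tensor has rank at most $\min\{\rho,N\}$. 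This is strictly smaller than $\rho$ in compressed cases such as $m=5$, $n=3$, $\rho=6$, where $N=5$.

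The second missing piece is direct-sum compatibility. Suppose you keep the two singleton factors in separate copies of $\CC^3$ and apply the Koszul map over $\CC^3\oplus\CC^3=\CC^6$. Then a simple tensor gives rank $\binom{5}{1}=5$ instead of $2$. The mixed component $\CC^3\wedge\CC^3$ of $\bigwedge\nolimits^2\CC^6$ does not vanish. Write $\rho_\nu=\rho(\tens{H}_\nu)$. For each summand, that component contributes only three times the rank $\min\{\rho_\nu,N_\nu\}$ of the flattening $\set{U}_\nu^*\to\CC^3\otimes\set{W}_\nu$ of the compressed summand. The resulting bound $\lceil\sum_\nu(2\rho_\nu+3\min\{\rho_\nu,N_\nu\})/5\rceil$ is too weak: two copies of the tensor above give $\lceil54/5\rceil=11<12$. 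The idea you need is the common first factor of \cref{lem:joint-koszul}. Send both singleton factors into \emph{one} $\CC^3$ by $(\vect{a},\vect{b})\mapsto\phi_1(\vect{a})+\phi_2(\vect{b})$. Annihilate the mixed-source blocks of the grouped $q$-factors, and keep the other two grouped factors in separate summands $\set{U}_1\oplus\set{U}_2$ and $\set{W}_1\oplus\set{W}_2$. The slices become $\matr{M}_{1,j}\oplus\matr{M}_{2,j}$, so after permutation the Koszul matrix is $\matr{K}(\tens{H}_1')\oplus\matr{K}(\tens{H}_2')$, of rank $2\rho_1+2\rho_2$. The divisor stays $2$, which yields $\br(\tens{H}_1\oplus\tens{H}_2)\ge\rho_1+\rho_2$. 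Your fallback names this route but does not construct it, so the odd-order lower bound, and with it the theorem for odd $m$, is not yet proved.
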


Taking one summand to be zero recovers the single-tensor equality
of \cite[Theorem~6.1]{holden2026exact}.
\begin{corollary}[Border ranks of a complex Hankel tensor]
\label{cor:hankel-border-ranks}
Let $m\ge3$ and let $\tens{H}\in\set{H}_m(n)$. Then
\[
\br(\tens{H})=\bs(\tens{H})=\rho(\tens{H}).
\]
\end{corollary}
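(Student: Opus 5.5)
This corollary should follow from \cref{thm:border} by choosing the second summand to be zero. Let $\tens{H}_2=\tens{0}\in\set{H}_m(p)$; the case $p=1$ with generating sequence $h_0=0$ is enough. Then $\tens{H}\oplus\tens{0}$ is $\tens{H}$ padded by zero blocks, and $\rho(\tens{0})=0$ by convention. Hence \eqref{eq:border-finite} reads
\[
\br(\tens{H}\oplus\tens{0})=\bs(\tens{H}\oplus\tens{0})=\rho(\tens{H}).
\]
It then remains to check that padding with zeros changes neither border rank.

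For the ordinary border rank, write $\set{A}=\CC^n\subseteq\set{A}'=\CC^n\oplus\CC^p$, and let $\pi\colon\set{A}'\to\set{A}$ be the coordinate projection. Inclusion gives $\br(\tens{H})\ge\br(\tens{H}\oplus\tens{0})$. For the reverse inequality, apply $\pi^{\otimes m}$ to a sequence of rank-$\le q$ approximants of $\tens{H}\oplus\tens{0}$. This map is linear, hence continuous, and does not increase CP rank. It also sends $\tens{H}\oplus\tens{0}$ to $\tens{H}$, so the images approximate $\tens{H}$ and $\br(\tens{H})\le\br(\tens{H}\oplus\tens{0})$.

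For the symmetric border rank, the same argument works because $\pi^{\otimes m}$ maps $\lambda\vect{a}^{\otimes m}$ to $\lambda(\pi\vect{a})^{\otimes m}$. It therefore sends $\Sym^m(\set{A}')$ to $\Sym^m(\set{A})$ and does not increase symmetric rank. Together with the inclusion $\Sym^m(\set{A})\subseteq\Sym^m(\set{A}')$, this gives $\bs(\tens{H})=\bs(\tens{H}\oplus\tens{0})$.

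The only points needing care are degenerate cases. If $\tens{H}=\tens{0}$, all three quantities are $0$. If $n=1$, then $D=0$ and $\rho=1$ for a nonzero scalar tensor. Both cases are covered by the theorem as stated, so no separate argument is needed. There is no real obstacle here: all the substance lies in \cref{thm:border}, and the padding invariance is the only step to write out.
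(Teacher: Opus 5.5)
Your proposal is correct and matches the paper's proof: the paper also takes $\tens{H}_2=\tens{0}$ in \cref{thm:border} and then uses the invariance of both border ranks under adjoining zero coordinates. Your inclusion-and-projection argument simply writes out the observation the paper records in \cref{sec:criterion}, namely that linear maps are continuous and that the inclusions have linear retractions.
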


The symmetric consequence for a more general partner requires
ordinary border-rank additivity as well as equality of that
partner's two border ranks.
\begin{corollary}[Transfer to symmetric border rank]
\label{cor:border-symmetric}
Let $\tens{H}\in\set{H}_m(n)$ and
$\tens{S}\in\Sym^m(\set{B})$ with $m\ge3$. If
$\underline{\Delta}_r(\tens{H},\tens{S})=0$ and
$\underline{\Delta}_c(\tens{S})=0$, then
\[
\br(\tens{H}\oplus\tens{S})=\bs(\tens{H}\oplus\tens{S})
=\rho(\tens{H})+\bs(\tens{S}).
\]
Thus $\underline{\Delta}_s(\tens{H},\tens{S})
=\underline{\Delta}_c(\tens{H}\oplus\tens{S})=0$.
Both hypotheses hold when $\tens{S}$ is itself a Hankel tensor of
order $m$.
\end{corollary}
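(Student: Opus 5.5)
The plan is a chain of inequalities that squeezes $\br(\tens{H}\oplus\tens{S})$ and $\bs(\tens{H}\oplus\tens{S})$ between the same value. We use \cref{cor:hankel-border-ranks} for the Hankel summand and the two hypotheses for $\tens{S}$.

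First, I compute the ordinary border rank exactly. By $\underline{\Delta}_r(\tens{H},\tens{S})=0$,
\[
\br(\tens{H}\oplus\tens{S})=\br(\tens{H})+\br(\tens{S}).
\]
By \cref{cor:hankel-border-ranks}, $\br(\tens{H})=\rho(\tens{H})$. By $\underline{\Delta}_c(\tens{S})=0$, $\br(\tens{S})=\bs(\tens{S})$. Together these give
\[
\br(\tens{H}\oplus\tens{S})=\rho(\tens{H})+\bs(\tens{S}).
\]

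Next, I bound the symmetric border rank of the direct sum from both sides. The direct sum $\tens{H}\oplus\tens{S}$ is a symmetric tensor in $\Sym^m(\CC^n\oplus\set{B})$, so $\bs(\tens{H}\oplus\tens{S})$ is defined. For the lower bound, the general inequality $\br\le\bs$ gives $\bs(\tens{H}\oplus\tens{S})\ge\rho(\tens{H})+\bs(\tens{S})$. For the upper bound, I use subadditivity of symmetric border rank under direct sums, which holds by adding approximating sequences of symmetric tensors supported on the two factors. Combined with $\bs(\tens{H})=\rho(\tens{H})$ from \cref{cor:hankel-border-ranks}, this gives $\bs(\tens{H}\oplus\tens{S})\le\rho(\tens{H})+\bs(\tens{S})$. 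The two bounds force equality, which proves the displayed formula.

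The gap statements then follow directly. For $\underline{\Delta}_s(\tens{H},\tens{S})$, we have $\bs(\tens{H})+\bs(\tens{S})=\rho(\tens{H})+\bs(\tens{S})=\bs(\tens{H}\oplus\tens{S})$, so it is $0$. For $\underline{\Delta}_c(\tens{H}\oplus\tens{S})$, the ordinary and symmetric border ranks of the sum coincide, so it is also $0$.

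For the final sentence, take $\tens{S}=\tens{H}_2$ Hankel of order $m$. \cref{thm:border} gives $\br(\tens{H}\oplus\tens{H}_2)=\rho(\tens{H})+\rho(\tens{H}_2)$, and \cref{cor:hankel-border-ranks} applied to $\tens{H}_2$ gives $\br(\tens{H}_2)=\bs(\tens{H}_2)=\rho(\tens{H}_2)$. From these, $\underline{\Delta}_c(\tens{H}_2)=0$ and $\underline{\Delta}_r(\tens{H},\tens{H}_2)=0$.

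There is no real obstacle. The only point needing care is that all approximations in the subadditivity step stay symmetric and live in $\Sym^m(\CC^n\oplus\set{B})$. This holds because $\Sym^m(\CC^n)$ and $\Sym^m(\set{B})$ embed into $\Sym^m(\CC^n\oplus\set{B})$, and sums of symmetric rank-one tensors remain symmetric.
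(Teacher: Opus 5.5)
Your proof is correct and is essentially the paper's argument. The paper invokes the border gap identity \eqref{eq:border-gap-identity} of \cref{lem:gap-transfer}, whose right side vanishes under the hypotheses together with $\underline{\Delta}_c(\tens{H})=0$; nonnegativity of the two left-hand gaps is exactly your pair of inequalities $\br\le\bs$ and subadditivity of $\bs$, and the Hankel case is handled as you do via \cref{thm:border,cor:hankel-border-ranks}.
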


\subsection{Border rank with third-order partners}\label{subsec:bord_rank_part}
Unlike \cref{thm:main}, \cref{thm:border} requires both summands to be
Hankel. 
The present results do not cover border-rank additivity with an unrestricted partner.
In fact, even the special case
$\br(\tens{T} \oplus 1)=\br(\tens{T})+1,$ 
where $\tens{T}$ is an arbitrary complex third-order tensor and
$1\in\CC\otimes\CC\otimes\CC$, remains open. This rank-one additivity
problem is attributed to Landsberg in
\cite[Problem~2]{buczynski2020additivity}; to our knowledge,
neither a proof nor a counterexample is known.

In this subsection, we restrict our attention to third-order complex tensors
and establish two sufficient conditions for border-rank additivity.
For a symmetric tensor $\tens{S}$, let $d(\tens{S})$ denote its essential
dimension, namely, the dimension of its common mode support or,
equivalently, the rank of any one-mode flattening of $\tens{S}$.

\begin{theorem}[Border rank additivity with a third-order Hankel summand]
\label{thm:cubic-partners}
Let $\tens{H}\in\set{H}_3(n)$ and let $\tens{T}$ be a complex
third-order tensor. Suppose that either
\begin{enumerate}
\item $\br(\tens{T})\le3$; or
\item $\tens{T}$ is symmetric and $d(\tens{T})\le5$.
\end{enumerate}
Then $\br(\tens{H}\oplus\tens{T})=\br(\tens{H})+\br(\tens{T})$.
Moreover, whenever $\tens{T}$ is symmetric under either condition,
one has $\br(\tens{T})=\bs(\tens{T})$ and
$\bs(\tens{H}\oplus\tens{T})=\br(\tens{H}\oplus\tens{T})
=\bs(\tens{H})+\bs(\tens{T})$.
\end{theorem}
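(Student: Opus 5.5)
Since subadditivity gives $\br(\tens{H}\oplus\tens{T})\le\br(\tens{H})+\br(\tens{T})$, only the lower bound $\br(\tens{H}\oplus\tens{T})\ge\rho(\tens{H})+\br(\tens{T})$ needs proof. By \cref{cor:hankel-border-ranks}, $\rho(\tens{H})=\br(\tens{H})$, so we must exhibit a Zariski-closed condition, such as a rank condition on a linear map depending linearly on the tensor, that certifies border rank at least $\rho(\tens{H})+\br(\tens{T})$ on the direct sum. The natural tool is a Koszul-type flattening, or a joint matrix certificate as in the proof of \cref{thm:border}, applied to $\tens{H}\oplus\tens{T}$. Any flattening built from a linear map in the tensor, which is block diagonal on a direct sum after restricting the auxiliary space appropriately, has rank additive over the two blocks. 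The plan is therefore to find, for each partner class, a single flattening family that is \emph{sharp} on both $\tens{H}$ (giving $\rho(\tens{H})$) and $\tens{T}$ (giving $\br(\tens{T})$), and that uses a common normalization so that the ranks add.

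For $\tens{H}$ of order three, I would reuse the Hankel matrix bounds from the border section: the middle binary catalecticant of $\tens{H}$ is realized after contracting one factor against a suitable slice (the "common-factor construction"). This gives a flattening whose rank, divided by a uniform constant $c$ (for ordinary flattenings $c=1$; for Koszul flattenings $c=\binom{k-1}{p}$-type factors), equals $\rho(\tens{H})$. The five-slice Hankel bound from \cref{sec:cubic-bounds} should supply exactly this when one restricts the first factor to a five-dimensional space. That restriction is needed to pair with partners of essential dimension at most five.

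For case (i), $\br(\tens{T})\le3$. Border rank $\le1$ is detected by the ordinary flattening. Border rank $2$ is detected by some flattening of rank $2$ unless $\tens{T}$ is concise in a small format. Border rank $3$ is detected either by a flattening or, for concise $3\times3\times3$ tensors, by the Strassen commutator equations. So I would split according to the maximal flattening rank of $\tens{T}$. If it equals $\br(\tens{T})$, use the ordinary flattening in that mode combined with the Hankel flattening, choosing the Hankel side's mode freely by symmetry. Otherwise $\tens{T}$ is, up to degeneration, one of the classified border-rank-$\le3$ normal forms (e.g.\ $W$-type tensors). For these I would use the Strassen degree-$3$ Koszul flattening $\CC^3\otimes\set{A}^*\to\Lambda^2\CC^3\otimes\set{C}$ restricted to a three-dimensional subspace on the $\set{H}$-side as well. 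The sharper partner bounds from \cref{sec:cubic-bounds} should provide rank $2\br(\tens{T})$ on the $\tens{T}$ block, and the five-slice/three-slice Hankel bound provides $2\rho(\tens{H})$ on the $\tens{H}$ block.

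For case (ii), $\tens{T}$ is a ternary-to-quinary cubic, and I would follow the same pattern with the Koszul flattening for $p=2$ on a five-dimensional restriction (Landsberg--Ottaviani style), together with the known classification of border ranks of cubics in at most five variables. In the few cases where the Koszul flattening is not sharp, I would invoke a direct argument: $\br(\tens{T})\le\bs(\tens{T})$, and the upper bound $\bs$ comes from explicit limits. The equality $\br(\tens{T})=\bs(\tens{T})$ in (ii) is exactly what the flattening bound proves when it is sharp, since its value matches a symmetric upper bound. For symmetric $\tens{T}$ in (i), the same holds because border rank $\le3$ symmetric tensors are known to have $\br=\bs$. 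The final symmetric statement then follows from \cref{cor:border-symmetric}.

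The main obstacle will be the genuinely wild cases: concise cubics in five variables, or small $\tens{T}$, where no Koszul flattening reaches $\br(\tens{T})$. In those cases I would try to combine a flattening with the substitution method, degenerating $\tens{H}\oplus\tens{T}$ along the $\tens{H}$-directions to strip off $\rho(\tens{H})$ without disturbing $\tens{T}$.
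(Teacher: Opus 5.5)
\textbf{Verdict.} The framework is right, but the proposal has genuine gaps. The block-diagonal Koszul certificate on a common first factor, with the Hankel block certified by \cref{prop:border-certificate} and \cref{lem:joint-koszul}, is the right one. Your execution of case (i) is flawed, and the essential content of case (ii) is missing.

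\textbf{Case (i).} In order three, no contraction or ordinary flattening of $\tens{H}$ certifies $\rho(\tens{H})$. Its flattening rank is $b=\min(n,\rho)$, which is smaller than $\rho$ whenever $\rho>n$, and this can happen once $n\ge3$. The only sharp Hankel certificate is the three-slice Koszul map, with divisor two, on a common $\CC^3$. So the partner must be certified by the same map: you need $\kappa(\tens{T})=\br(\tens{T})$, i.e.\ partner Koszul rank at least $2\br(\tens{T})-1$ (the odd case is absorbed by rounding, since $2\rho$ is even). Pairing an ordinary flattening of $\tens{T}$, which has divisor one, with the Hankel certificate gives no valid bound.

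Your case split in (i) is also vacuous. When $\br(\tens{T})\le3$, the maximal flattening rank always equals $\br(\tens{T})$, because every $2\times2\times2$ tensor has border rank at most two. So concise $3\times3\times3$ tensors land in your flattening branch, which is exactly where the argument breaks. Your ``otherwise'' branch is the right tool, but it must handle every partner. The paper does this with the two-slice identity $\rank\matr{K}(\matr{M}_0,\matr{M}_1,0)=\rank\binom{\matr{M}_0}{\matr{M}_1}+\rank(\matr{M}_0\ \matr{M}_1)$ for the non-concise formats. For the four Buczy\'nski--Landsberg normal forms it computes Koszul rank six directly.

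\textbf{The five-slice bound.} It is not exact: it gives only $\rank\matr{K}_{\CC^5}^{2}(\widehat{\tens{H}})\ge6\rho-2$. It is used only when $\bs(\tens{T})=8$. It works there because Ottaviani's invariant gives partner rank $45=6\cdot8-3$, so $\lceil(6\rho-2+45)/6\rceil=\rho+8$.

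\textbf{Case (ii): the main gap.} An explicit upper bound $\bs(\tens{T})\le q$ does nothing for $\br(\tens{H}\oplus\tens{T})$ when the certificate for $\tens{T}$ falls short of $q$. What must be proved is that this never happens. An unspecified classification of border ranks of quinary cubics would not supply a divisor-compatible certificate, nor $\br=\bs$. The paper proves sharpness as follows.
\begin{itemize}
\item With an invertible polar $\matr{Q}$, $\rank\matr{K}(\matr{Q},\matr{P},\matr{R})=2d+\rank(\matr{P}\matr{Q}^{-1}\matr{R}-\matr{R}\matr{Q}^{-1}\matr{P})$.
\item Non-commuting polars therefore give $\kappa\ge d+1$, which meets Alexander--Hirschowitz for $d\le4$.
\item Commuting polars give a Frobenius algebra of length $d$, smoothable by Casnati--Jelisiejew--Notari, so $\br=\bs=d$.
\item For $d=5$ the critical case is $\kappa(\tens{T})=6$. \cref{prop:six-cubes} shows this forces $\bs(\tens{T})\le6$. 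The commutator operator on $\bigwedge^2\set{U}$ preserves decomposables; Chow's theorem then gives either a commuting extension to $\CC^6$ or a commuting hyperplane, handled via \cref{lem:small-algebra}.
\item Vanishing-Hessian cubics reduce to the Perazzo form. It has border rank five and an explicit three-slice certificate of rank ten, so the ``wild'' case you anticipate is harmless.
\end{itemize}

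\textbf{Your fallback.} Stripping off $\rho(\tens{H})$ by a substitution-type degeneration while leaving $\tens{T}$ intact is not an available border-rank tool. A general step of that kind would, for $n=1$, settle the open problem $\br(\tens{T}\oplus1)=\br(\tens{T})+1$.
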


% \begin{remark}
% Over $\CC$, Buczy\'nski, Postinghel, and Rupniewski prove
% border rank additivity when each mode of the entire direct sum
% has dimension at most four
% \cite{buczynski2020additivity}.
% In contrast, \cref{thm:cubic-partners} uses the Hankel structure
% of $\tens{H}$ and places no upper bound on its dimension. 
% For symmetric cubic tensors over $\CC$, Seigal proves
% $\br(\tens{S})=\bs(\tens{S})$ when $d(\tens{S})\le4$ and,
% in arbitrary dimension, when $\bs(\tens{S})\le5$
% \cite{seigal2020ranks}.
% Condition~(ii) of \cref{thm:cubic-partners} extends the dimension range to five
% essential variables without a separate border rank assumption.
% It also establishes both direct sum additivity statements
% with a Hankel summand of arbitrary dimension. 
% \end{remark}

\begin{remark}
Over $\CC$, Buczy\'nski, Postinghel, and Rupniewski prove
border rank additivity when each mode of the entire direct sum
has dimension at most four
\cite{buczynski2020additivity}.
In contrast, \cref{thm:cubic-partners} uses the Hankel structure
of $\tens{H}$ and places no upper bound on its dimension.
For symmetric cubic tensors over $\CC$, Seigal proves
$\br(\tens{S})=\bs(\tens{S})$ when $d(\tens{S})\le4$ and,
in arbitrary dimension, when $\bs(\tens{S})\le5$
\cite{seigal2020ranks}.
Ma\'ndziuk and Ventura establish this equality for concise
symmetric cubic tensors of minimal border rank in at most five
variables, and for cubic forms of symmetric border rank at most six
\cite[Proposition~5.14 and Corollary~5.15]{mandziuk2026symmetrization}.
Condition~(ii) of \cref{thm:cubic-partners} covers all symmetric
cubic tensors in at most five essential variables, without a
separate border-rank assumption.
It also establishes both ordinary and symmetric border-rank
additivity with a Hankel summand of arbitrary dimension.
The proof supplies the matrix lower bounds needed for these
additivity statements.
\end{remark}

\section{A criterion for CP rank additivity}\label{sec:criterion}

\subsection{Notation, supports, and matroid union}
We use bold italic uppercase letters for tensors and bold italic
lowercase letters for vectors. Vectors are written as columns.
Bold upright uppercase letters denote matrices and linear operators
in specified bases. Calligraphic uppercase letters denote spaces,
algebras, and finite index sets.
We use $\tens{T}$ for a general tensor, $\tens{S}$ when symmetry is
assumed from the outset, and $\tens{H}$ for a Hankel tensor, with
subscripts to distinguish tensors of the same type.
Scalars, coordinates, polynomials, linear functionals, and abstract
algebra elements use ordinary italic letters.
We write $\Span$ for linear span and $\rank$ for matrix or linear
map rank; the operator $\Stab$ denotes the stabilizer of a subspace.
All transposes are ordinary transposes over $\CC$.

The support in mode $i$ is the image of the flattening obtained by
contracting all other modes. A tensor is concise if each support
equals its ambient factor space. Replacing the factor spaces by these
supports preserves CP rank. This support property holds over
any field; see \cite[Lemma~2.7]{buczynski2020additivity}.
For symmetric tensors, the common support can be used in all modes,
and inclusion and projection also preserve symmetric rank.
The same arguments preserve both border ranks, since linear maps
are continuous and the inclusions have linear retractions.
A tensor in $\bigotimes_i\set{A}_i$ is viewed as a multilinear form
on $\prod_i\set{A}_i^*$.
For $\set{C}\subseteq\set{A}_i$, quotienting the $i$th factor
restricts that input to the annihilator $\set{C}^\perp\subseteq\set{A}_i^*$.
Indeed, the canonical identification is
$\set{A}_i/\set{C}\simeq(\set{C}^\perp)^*$, and
$\codim\set{C}^\perp=\dim\set{C}$.

The following identity provides the common transfer from ordinary
to symmetric rank, both for exact rank and for border rank.
In either identity, vanishing of the three gaps on the right
forces both gaps on the left to vanish. We use this observation
for all symmetric consequences below. 
We omit its simple proof.

\begin{lemma}[Gap transfer]\label{lem:gap-transfer}
For symmetric tensors $\tens{S}_1$ and $\tens{S}_2$ of the same order,
the following identities hold:
\begin{align}
\Delta_c(\tens{S}_1\oplus\tens{S}_2)+\Delta_s(\tens{S}_1,\tens{S}_2)
&=\Delta_c(\tens{S}_1)+\Delta_c(\tens{S}_2)
  +\Delta_r(\tens{S}_1,\tens{S}_2),\label{eq:gap-identity}\\
\underline{\Delta}_c(\tens{S}_1\oplus\tens{S}_2)
+\underline{\Delta}_s(\tens{S}_1,\tens{S}_2)
&=\underline{\Delta}_c(\tens{S}_1)+\underline{\Delta}_c(\tens{S}_2)
  +\underline{\Delta}_r(\tens{S}_1,\tens{S}_2).
\label{eq:border-gap-identity}
\end{align}
\end{lemma}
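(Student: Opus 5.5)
The identity is purely formal, so the plan is to expand every gap by its definition and compare the two sides term by term, with no appeal to any structural property of the tensors. All quantities are finite integers: ranks of finite-dimensional tensors, and border ranks bounded above by them. Hence the gaps can be manipulated as ordinary integers. The only input beyond the definitions is that the direct sum of two symmetric tensors of the same order is again symmetric, so that $s(\tens{S}_1\oplus\tens{S}_2)$ and $\bs(\tens{S}_1\oplus\tens{S}_2)$ are defined. I would check this first: each entry of $\tens{S}_1\oplus\tens{S}_2$ with all indices in one block is an entry of $\tens{S}_1$ or $\tens{S}_2$, mixed-index entries vanish, and both conditions are invariant under permuting indices.

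For \eqref{eq:gap-identity}, I would write the left side as
\[
\bigl(s(\tens{S}_1\oplus\tens{S}_2)-r(\tens{S}_1\oplus\tens{S}_2)\bigr)
+\bigl(s(\tens{S}_1)+s(\tens{S}_2)-s(\tens{S}_1\oplus\tens{S}_2)\bigr).
\]
The term $s(\tens{S}_1\oplus\tens{S}_2)$ cancels, leaving
\[
s(\tens{S}_1)+s(\tens{S}_2)-r(\tens{S}_1\oplus\tens{S}_2).
\]
The right side is
\[
\bigl(s(\tens{S}_1)-r(\tens{S}_1)\bigr)+\bigl(s(\tens{S}_2)-r(\tens{S}_2)\bigr)
+\bigl(r(\tens{S}_1)+r(\tens{S}_2)-r(\tens{S}_1\oplus\tens{S}_2)\bigr).
\]
Here the terms $r(\tens{S}_1)$ and $r(\tens{S}_2)$ cancel, and the same expression remains. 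Equation \eqref{eq:border-gap-identity} follows verbatim after replacing $r,s$ by $\br,\bs$ and each $\Delta$ by $\underline{\Delta}$, since the border gaps have exactly the same formal shape.

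There is no real obstacle; the one point worth stating is the well-definedness of the symmetric quantities on the direct sum, handled above. Afterward I would record the consequence used later. All five gaps are nonnegative, which follows from the inequalities listed in the introduction. Therefore, if the three gaps on the right vanish, the left side is a sum of two nonnegative integers equal to zero, so both gaps on the left vanish.
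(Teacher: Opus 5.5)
Your proposal is correct and is the same argument the paper has in mind; the paper omits the proof as simple. After cancellation, both sides of \eqref{eq:gap-identity} reduce to $s(\tens{S}_1)+s(\tens{S}_2)-r(\tens{S}_1\oplus\tens{S}_2)$, and \eqref{eq:border-gap-identity} is the same computation with $\br,\bs$; your remarks on the symmetry of the direct sum and on nonnegativity match how the paper uses the lemma.
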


A vector family indexed by a finite set $\set{E}$ defines a vector
matroid: its independent sets index linearly independent vectors,
and its rank on $\set{S}\subseteq \set{E}$ is the dimension of their span.
We use the standard matroid union theorem
\cite{edmonds1968partition}; the following form is stated directly
in \cite[Theorem~4]{goemans2009union}.

\begin{lemma}[Matroid union]\label{lem:matroid}
Let $M_1,\ldots,M_q$ be matroids on a finite set $\set{E}$, with
rank functions $\varrho_1,\ldots,\varrho_q$.
The largest value of $\sum_i|\set{I}_i|$ over pairwise disjoint sets
$\set{I}_i$ that are independent in $M_i$ is
\begin{equation}\label{eq:matroid-union}
\min_{\set{S}\subseteq \set{E}}
\left(|\set{E}\setminus \set{S}|+\sum_{i=1}^q\varrho_i(\set{S})\right).
\end{equation}
\end{lemma}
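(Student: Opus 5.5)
Since \cref{lem:matroid} is the classical matroid union (partition) theorem, the plan is to prove the two inequalities separately. The upper bound is elementary; the lower bound uses an exchange-graph augmentation argument in the style of Edmonds.

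\emph{Weak duality.} Fix pairwise disjoint sets $\set{I}_1,\ldots,\set{I}_q$ with $\set{I}_i$ independent in $M_i$, and fix any $\set{S}\subseteq\set{E}$. Split each $\set{I}_i$ as $(\set{I}_i\cap\set{S})\cup(\set{I}_i\setminus\set{S})$. The first part is independent in $M_i$ and lies in $\set{S}$, so $|\set{I}_i\cap\set{S}|\le\varrho_i(\set{S})$. The second parts are disjoint subsets of $\set{E}\setminus\set{S}$, so their sizes sum to at most $|\set{E}\setminus\set{S}|$. Adding these bounds shows that every admissible $\sum_i|\set{I}_i|$ is at most every value inside the minimum in \eqref{eq:matroid-union}.

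\emph{Strong duality via augmenting paths.} Take disjoint independent sets $\set{I}_i$ with $\sum_i|\set{I}_i|$ maximum, and put $\set{U}=\bigcup_i\set{I}_i$. Build a directed exchange graph on $\set{E}$ as follows: for $y\in\set{I}_i$ and $x\notin\set{I}_i$, draw an arc $x\to y$ when $\set{I}_i-y+x$ is independent in $M_i$ while $\set{I}_i+x$ is not. Call $x$ a \emph{sink for $M_i$} if $x\notin\set{I}_i$ and $\set{I}_i+x$ is independent in $M_i$. Let $\set{S}$ be the set of elements from which some sink is reachable.
\begin{enumerate}
\item Every element of $\set{E}\setminus\set{U}$ lies in $\set{S}$. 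If some $x\in\set{E}\setminus\set{U}$ reached a sink, a \emph{shortest} such path would let us perform all exchanges simultaneously. This gives new disjoint independent sets with total size increased by one, contradicting maximality.
\item Consequently $\set{E}\setminus\set{S}\subseteq\set{U}$, and within $\set{E}\setminus\set{S}$ each element belongs to exactly one $\set{I}_i$.
\item For each $i$, the set $\set{I}_i\cap\set{S}$ spans $\set{S}$ in $M_i$, so $\varrho_i(\set{S})=|\set{I}_i\cap\set{S}|$. Take $x\in\set{S}\setminus\set{I}_i$. The element $x$ is not a sink for $M_i$, because sinks reach themselves and would lie outside $\set{S}$. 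Hence $\set{I}_i+x$ contains a unique circuit. Every $y$ on that circuit satisfies $x\to y$, and $y\in\set{S}$ because $\set{S}$ is closed under predecessors. So $x$ lies in the span of $\set{I}_i\cap\set{S}$.
\end{enumerate}
Putting these together,
\[
|\set{E}\setminus\set{S}|+\sum_i\varrho_i(\set{S})
=|\set{U}\setminus\set{S}|+\sum_i|\set{I}_i\cap\set{S}|
=\sum_i|\set{I}_i|,
\]
which matches the weak-duality bound and proves equality.

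The main obstacle is the claim in step (i) that exchanges along a shortest path keep every $\set{I}_i$ independent. This needs the standard lemma: if $\set{I}$ is independent and the pairs $(x_k,y_k)$ satisfy $\set{I}-y_k+x_k$ independent, and the pairs admit a unique perfect matching in the exchange relation, then $\set{I}-\{y_k\}+\{x_k\}$ is independent. Shortestness of the path supplies exactly this uniqueness. I would prove the lemma by induction on the number of pairs, using circuit elimination. The remaining steps are routine bookkeeping.

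Since the paper cites \cite{edmonds1968partition,goemans2009union}, one may of course simply invoke the theorem. The argument above is the self-contained route if a proof is wanted.
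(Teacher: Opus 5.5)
The paper does not prove \cref{lem:matroid}; it quotes the classical theorem from \cite{edmonds1968partition,goemans2009union}. The standard textbook route, as in the cited lecture notes, reduces it to matroid intersection: one takes a copy of $\set{E}$ for each matroid and intersects with a partition matroid. Your direct augmenting-path argument, in the spirit of Edmonds' partition algorithm, avoids matroid intersection and is constructive. Its cost is the unique-matching exchange lemma, which you correctly identify as the technical core. Your weak-duality half is correct. The strong-duality half has the right structure, but two points must be repaired.

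\emph{The set $\set{S}$ is defined backwards.} You define $\set{S}$ as the set of elements from which some sink is reachable. Step (i), however, proves that uncovered elements reach \emph{no} sink, and step (iii) uses that sinks lie outside $\set{S}$. Every step actually needs $\set{S}=\{x\in\set{E}:\text{no sink is reachable from }x\}$. For this set, the property used in (iii) is closure under \emph{successors}: if $x\in\set{S}$ and $x\to y$, then $y\in\set{S}$. Closure under predecessors is a property of the complement $\set{E}\setminus\set{S}$, and it would not place the circuit elements $y$ in $\set{S}$. With this correction, (i)--(iii) and your final count are valid.

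\emph{The last augmentation step is not covered by your lemma.} Let the shortest path be $x_0\to\cdots\to x_k$, with $x_k$ a sink for $M_j$. Let $X_j$ and $Y_j$ be the tails and heads of the path arcs $x_t\to x_{t+1}$ with $x_{t+1}\in\set{I}_j$. The new $j$th set is $(\set{I}_j\setminus Y_j)\cup X_j\cup\{x_k\}$, which is one element larger than $\set{I}_j$, whereas your lemma treats only size-preserving exchanges.

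One more use of shortestness closes this. No $x_t$ with $t<k$ is a sink, so $X_j\subseteq\operatorname{cl}_j(\set{I}_j)$, where $\operatorname{cl}_j$ denotes closure in $M_j$. The lemma makes $\set{I}_j'=(\set{I}_j\setminus Y_j)\cup X_j$ independent. Since $\set{I}_j'\subseteq\operatorname{cl}_j(\set{I}_j)$ and $|\set{I}_j'|=|\set{I}_j|$, the two closures coincide. This common closure does not contain $x_k$, so $\set{I}_j'\cup\{x_k\}$ is independent. Equivalently, you can adjoin a coloop $z$ to $M_j$ and treat the last step as exchanging $z$ for $x_k$.

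The same no-intermediate-sink property is also needed for your triangularity claim. Your graph omits arcs out of $x$ into $\set{I}_i$ when $\set{I}_i+x$ is independent. So the absence of a shortcut arc $x_s\to x_{t+1}$ shows that $\set{I}_i-x_{t+1}+x_s$ is dependent only once you know that $x_s$ is not a sink.
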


\subsection{Eliminating terms while preserving the partner}
\begin{proposition}[Elimination criterion]\label{prop:criterion}
Let $m\ge2$ and
$\tens{T}_1\in\set{A}_1\otimes\cdots\otimes\set{A}_m$.
Write $\tens{T}_{1,i,\set{C}}$ for its image under the quotient
$\set{A}_i\to\set{A}_i/\set{C}$ in mode $i$.
Suppose that, in $m-1$ modes, every subspace $\set{C}$ satisfies
\begin{equation}\label{eq:criterion}
 r(\tens{T}_{1,i,\set{C}})
 \ge r(\tens{T}_1)-(m-1)\dim\set{C}.
\end{equation}
Then $\Delta_r(\tens{T}_1,\tens{T}_2)=0$ for every tensor $\tens{T}_2$
of order $m$.
\end{proposition}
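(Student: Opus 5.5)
The plan is to argue by induction on $r(\tens{T}_2)$, or equivalently on the length of a decomposition of $\tens{T}_1\oplus\tens{T}_2$, using a substitution (elimination) argument of JaJa--Takche type. The hypothesis \eqref{eq:criterion} is what allows elimination steps to be paid for by the $\tens{T}_1$ side. After relabelling, assume \eqref{eq:criterion} holds in modes $1,\ldots,m-1$.

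\textbf{Setup.} Fix a minimum decomposition
\[
\tens{T}_1\oplus\tens{T}_2=\sum_{h\in\set{E}}\bigotimes_{i=1}^m(\vect{x}_{h,i},\vect{y}_{h,i}),
\qquad |\set{E}|=r(\tens{T}_1\oplus\tens{T}_2),
\]
where $(\vect{x}_{h,i},\vect{y}_{h,i})\in\set{A}_i\oplus\set{B}_i$. We want $|\set{E}|\ge r(\tens{T}_1)+r(\tens{T}_2)$.

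\textbf{Linear maps that fix the partner.} For each mode $i\le m-1$, pick $\set{I}_i\subseteq\set{E}$ such that the vectors $\vect{x}_{h,i}$, $h\in\set{I}_i$, are linearly independent. Put $\set{C}_i=\Span\{\vect{x}_{h,i}:h\in\set{I}_i\}$, and choose a linear map $f_i:\set{A}_i\to\set{B}_i$ with $f_i(\vect{x}_{h,i})=\vect{y}_{h,i}$ for $h\in\set{I}_i$; this is possible by independence. Then define
\[
L_i(\vect{x},\vect{y})=(\vect{x}\bmod\set{C}_i,\ \vect{y}-f_i(\vect{x})),
\]
so that $L_i$ annihilates the $i$th factor of every term $h\in\set{I}_i$ and is the identity on $\set{B}_i$.

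Apply $L_i$ in the modes $i\le m-1$, with the sets $\set{I}_i$ taken pairwise disjoint, and leave mode $m$ unchanged. The image has a decomposition with $|\set{E}|-\sum_i|\set{I}_i|$ terms. Since mode $m$ is untouched and $\tens{T}_1$ has mode-$m$ support in $\set{A}_m$, the image splits along mode $m$. Its $\set{B}_m$-part is exactly $\tens{T}_2$, placed in the pure $\set{B}$ block. Its $\set{A}_m$-part is a tensor $\tens{T}_1'$ whose pure $\set{A}$-block is the iterated quotient of $\tens{T}_1$ by the $\set{C}_i$. Projecting every mode to $\set{B}$ then yields a decomposition of $\tens{T}_2$ that uses only the surviving terms whose $\set{B}_m$-part is nonzero. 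Projecting every mode to $\set{A}$ instead yields a decomposition of the quotient of $\tens{T}_1$ that uses only the surviving terms whose $\set{A}_m$-part is nonzero.

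\textbf{Counting via matroid union.} The sets $\set{I}_i$ will be chosen with \cref{lem:matroid}, applied to the vector matroids $M_i$ of the families $(\vect{x}_{h,i})_{h\in\set{E}}$, $i\le m-1$. We take disjoint independent sets of maximum total size together with a minimizing set $\set{S}$ in \eqref{eq:matroid-union}. Terms outside $\set{S}$ are charged against $r(\tens{T}_2)$. Terms inside $\set{S}$ are controlled by the ranks $\varrho_i(\set{S})=\dim\Span\{\vect{x}_{h,i}:h\in\set{S}\}$. Quotienting by these spans kills the $\set{A}$-part of every term of $\set{S}$. Applying \eqref{eq:criterion}, mode by mode, with $\dim\set{C}=\varrho_i(\set{S})$, shows that the surviving terms still number at least $r(\tens{T}_1)-(m-1)\sum_i\varrho_i(\set{S})$ on the $\tens{T}_1$ side. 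Adding the two counts and using the min--max equality of \cref{lem:matroid}, the deficit term $\sum_i\varrho_i(\set{S})$ should cancel against the surplus $\sum_i|\set{I}_i|$ of eliminated terms. This would give $|\set{E}|\ge r(\tens{T}_1)+r(\tens{T}_2)$.

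\textbf{Main obstacle.} The hardest step is the bookkeeping in this last count. First, the terms with both $\set{A}$- and $\set{B}$-parts nonzero must not be counted twice. Second, the single-mode hypothesis \eqref{eq:criterion} must be applied to an iterated quotient across $m-1$ modes, while it is stated only for $\tens{T}_1$ itself. I would first try to avoid the iteration entirely: choose the minimizing $\set{S}$ so that all elimination is performed in the one mode $i$ with the largest $\varrho_i(\set{S})$, and use the other $m-1$ modes only to absorb the factor $(m-1)$. If that fails, I would instead run induction on $\dim\set{C}$, eliminating one term at a time. In that version each step costs at most $m-1$ in the rank of the $\tens{T}_1$ side and removes one term whose $\set{B}$-part must still be reproduced, following the substitution method for $\tens{T}_2$.
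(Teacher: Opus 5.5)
\textbf{There is a genuine gap in the counting step.} Your elimination maps are sound, and they already finish the proof once the right count is in hand. Composing $L_i$ with the projection onto $\set{B}_i$ gives $\vect{y}-f_i(\vect{x})$, and projecting mode $m$ onto $\set{B}_m$ kills $\tens{T}_1$ while fixing $\tens{T}_2$. So pairwise disjoint independent sets $\set{I}_1,\ldots,\set{I}_{m-1}$ of total size at least $R=r(\tens{T}_1)$ immediately give $r(\tens{T}_2)\le|\set{E}|-R$. You never need a separate count on the $\tens{T}_1$ side, and terms with both parts nonzero cause no double counting.

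The whole burden is to show that every entry of the minimum in \eqref{eq:matroid-union} is at least $R$, and your argument for this fails in two ways. First, quotienting all $m-1$ modes by the spans and applying \eqref{eq:criterion} mode by mode is not licensed: the hypothesis concerns only single-mode quotients of $\tens{T}_1$ itself, not iterated quotients. Second, even granting $|\set{E}\setminus\set{S}|\ge R-(m-1)\sum_{i<m}\varrho_i(\set{S})$, you would only get $|\set{E}\setminus\set{S}|+\sum_{i<m}\varrho_i(\set{S})\ge R-(m-2)\sum_{i<m}\varrho_i(\set{S})$. For $m\ge3$ this falls below $R$, so the deficit does not cancel against the matroid-union surplus.

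\textbf{The missing idea is that one mode already kills the $\set{A}$-parts of all terms in $\set{S}$.} Project every mode onto $\set{A}_i$ and quotient only the controlled mode $j$ in which $\varrho_j(\set{S})$ is \emph{smallest}. The terms outside $\set{S}$ then decompose $\tens{T}_{1,j,\set{C}}$ with $\dim\set{C}=\varrho_j(\set{S})$. Hence \eqref{eq:criterion} gives $|\set{E}\setminus\set{S}|\ge R-(m-1)\varrho_j(\set{S})\ge R-\sum_{i<m}\varrho_i(\set{S})$, because the minimum of the $m-1$ ranks is at most their average. This is how the factor $m-1$ in the hypothesis is matched to the $m-1$ matroids; it is the paper's Step~1, followed by \cref{lem:matroid} and exactly your elimination.

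Your fallback uses the mode with the \emph{largest} $\varrho_i(\set{S})$, which reverses this inequality and fails. Note also that the terms outside $\set{S}$ are charged against $\tens{T}_1$, through the rank of its single-mode quotient, not against $r(\tens{T}_2)$. The proposed induction on $r(\tens{T}_2)$, or elimination one term at a time, is not needed, and as stated it is too vague to check.
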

\begin{proof}
The zero case is immediate. Label the controlled modes by
$1,\ldots,m-1$ and write $R=r(\tens{T}_1)$.
Consider a CP decomposition of length $L$ of the form
\[
 \tens{T}_1\oplus\tens{T}_2
 =\sum_{\ell=1}^L\bigotimes_{i=1}^m
               (\vect{a}_{\ell i}+\vect{b}_{\ell i}),
 \ 
 \vect{a}_{\ell i}\in\set{A}_i,\ 
 \vect{b}_{\ell i}\in\set{B}_i.
\]
Let $\set{E}=\{1,\ldots,L\}$, and in mode $i<m$ use the vector
matroid of the original components $\vect{a}_{\ell i}$.
Its rank function is
$\varrho_i(\set{S})=\dim\Span\{\vect{a}_{\ell i}:\ell\in \set{S}\}.$

\emph{Step 1: selecting independent terms.}
For any $\set{S}\subseteq \set{E}$, choose a mode $j<m$ where
$\varrho_j(\set{S})$ is minimal. Project the decomposition onto
all original factors, then quotient mode $j$ by the span
of $\vect{a}_{\ell j}$ with $\ell\in \set{S}$.
Every term in $\set{S}$ vanishes, so \eqref{eq:criterion} implies
\[
 |\set{E}\setminus \set{S}|\ge R-(m-1)\varrho_j(\set{S}).
\]
Since $(m-1)\varrho_j(\set{S})\le\sum_{i=1}^{m-1}\varrho_i(\set{S})$,
every entry of the minimum in \eqref{eq:matroid-union} is
at least $R$. Thus \cref{lem:matroid} supplies pairwise
disjoint independent sets $\set{I}_1,\ldots,\set{I}_{m-1}$ containing
at least $R$ indices in total. Delete indices if necessary
so that their total size is exactly $R$.

\emph{Step 2: eliminating terms.}
For each $i<m$, the vectors
$\{\vect{a}_{\ell i}:\ell\in \set{I}_i\}$ are independent.
Hence there is a linear map
$\matr{F}_i:\set{A}_i\to\set{B}_i$ with
$\matr{F}_i(\vect{a}_{\ell i})=-\vect{b}_{\ell i}
(\ell\in \set{I}_i).$ 
Define it first on the indicated span and extend along a
complement. Set
\[
 \matr{P}_i(\vect{a}+\vect{b})
       =\matr{F}_i(\vect{a})+\vect{b}\ (i<m),
 \ 
 \matr{P}_m(\vect{a}+\vect{b})=\vect{b}.
\]
Each map fixes its partner space pointwise. The last map
annihilates the original space.

The product map $\bigotimes_{i=1}^m\matr{P}_i$ sends
$\tens{T}_1\oplus\tens{T}_2$ to the same tensor $\tens{T}_2$.
For every selected index, its simple term vanishes in its
assigned mode. At least $R$ terms are therefore deleted,
giving $r(\tens{T}_2)\le L-R$.
Taking a minimum joint CP decomposition and using subadditivity
proves the result.
\end{proof}

The proof is inspired by the projection arguments in the proof of
\cite[Lemma~3.5]{buczynski2020additivity} and combines them with
matroid union to eliminate terms in several modes while preserving
the partner.
The same argument gives the following quantitative form.

\begin{corollary}[A criterion with an error term]\label{cor:error}
If the hypotheses of \cref{prop:criterion} are replaced by
\[
r(\tens{T}_{1,i,\set{C}})
 \ge r(\tens{T}_1)-(m-1)\dim\set{C}-\varepsilon
\] 
for a nonnegative integer $\varepsilon$, then we have
$\Delta_r(\tens{T}_1,\tens{T}_2)\le\varepsilon$
for every tensor of the same order.
\end{corollary}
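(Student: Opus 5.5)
The plan is to rerun the proof of \cref{prop:criterion} almost verbatim, carrying the additive error $\varepsilon$ through the matroid-union step. Write $R=r(\tens{T}_1)$ and fix a minimum CP decomposition of $\tens{T}_1\oplus\tens{T}_2$ of length $L$, split into original and partner components $\vect{a}_{\ell i}+\vect{b}_{\ell i}$. As before, use the vector matroids of the original components in the controlled modes $1,\ldots,m-1$, all on the ground set $\set{E}=\{1,\ldots,L\}$.

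\emph{Step 1: the weakened matroid bound.} For any $\set{S}\subseteq\set{E}$, choose a controlled mode $j$ minimizing $\varrho_j(\set{S})$. Project onto the original factors, then quotient mode $j$ by the span of the $\vect{a}_{\ell j}$ with $\ell\in\set{S}$. This kills every term indexed by $\set{S}$ and leaves a decomposition of $\tens{T}_{1,j,\set{C}}$ with $\dim\set{C}=\varrho_j(\set{S})$. The weakened hypothesis then gives
\[
|\set{E}\setminus\set{S}|\ge R-(m-1)\varrho_j(\set{S})-\varepsilon\ge R-\varepsilon-\sum_{i=1}^{m-1}\varrho_i(\set{S}).
\]
So every entry of the minimum in \eqref{eq:matroid-union} is at least $R-\varepsilon$. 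If $R-\varepsilon\le0$, the claim $\Delta_r\le\varepsilon$ follows from the trivial bound $r(\tens{T}_2)\le r(\tens{T}_1\oplus\tens{T}_2)$, which holds because projection onto the partner factors is linear. Otherwise \cref{lem:matroid} supplies pairwise disjoint independent sets $\set{I}_1,\ldots,\set{I}_{m-1}$ of total size at least $R-\varepsilon$, and we trim them to exactly $R-\varepsilon$.

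\emph{Step 2: elimination.} Build the maps $\matr{F}_i$ and $\matr{P}_i$ exactly as in the proof of \cref{prop:criterion}. The product map sends $\tens{T}_1\oplus\tens{T}_2$ to $\tens{T}_2$ and annihilates every selected term, so $r(\tens{T}_2)\le L-(R-\varepsilon)$. Since $L=r(\tens{T}_1\oplus\tens{T}_2)$, rearranging gives $\Delta_r(\tens{T}_1,\tens{T}_2)\le\varepsilon$.

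I expect no real obstacle. The only point needing care is that the error enters additively and uniformly in $\set{S}$, so that it passes unchanged through the min–max duality of \cref{lem:matroid}. The trivial case $R\le\varepsilon$ also has to be handled separately, as above.
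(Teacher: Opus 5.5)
Your proposal is correct and follows the paper's own proof. The error term passes additively through the matroid-union minimum, which is therefore at least $r(\tens{T}_1)-\varepsilon$. When this bound is positive, the same elimination construction deletes that many terms, and otherwise the trivial bound $r(\tens{T}_1\oplus\tens{T}_2)\ge r(\tens{T}_2)$ gives the claim.
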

\begin{proof}
The minimum in \eqref{eq:matroid-union} is at least
$r(\tens{T}_1)-\varepsilon$. If this is positive, eliminate
that many indices by the same construction.
Otherwise use
$r(\tens{T}_1\oplus\tens{T}_2)\ge r(\tens{T}_2)$.
\end{proof}

\section{Hankel tensors and their moment algebras}\label{sec:moment}

\subsection{Binary apolar data}
Fix $m\ge3$. Let $\tens{H}\ne\tens{0}$ belong to
$\set{H}_m(n)$, initially with $n\ge2$, and put $D=m(n-1)$.
Identify each input with a polynomial in $\CC[t]_{<n}$,
where the subscript means degree strictly less than $n$.
Define $L(t^j)=h_j$ for $0\le j\le D$. Then
$\tens{H}(p_1,\ldots,p_m)=L(p_1\cdots p_m).$ 
The associated binary form is
$$F_H(x,y)=\sum_{j=0}^D\binom Dj h_jx^{D-j}y^j.$$ 
For $0\le d\le D$, define its degree $d$ apolar space by
\begin{equation*}
 \set{I}_d=
 \left\{\sum_{j=0}^d g_jX^{d-j}Y^j:
       \sum_{j=0}^d g_jh_{i+j}=0\ (0\le i\le D-d)\right\}.
\end{equation*}
The coefficient vectors of $\set{I}_d$ form the kernel of the
degree-$d$ catalecticant. Binary apolarity identifies
$\rho=\rho(\tens{H})$ from \eqref{eq:rho-middle} with the smallest
degree for which $\set{I}_\rho\ne0$; see \cite[Section~2]{nie2019hankel}.
Moreover, the symmetric border rank of $F_H$ over $\CC$
is $\rho$ by \cite[Theorem~2.2]{nie2019hankel}.
In particular, we have $1\le\rho\le\lfloor D/2\rfloor+1$.

Choose a nonzero $G_H\in\set{I}_\rho$.
A common binary coordinate change induces an invertible map
on forms of degree $n-1$ in every tensor mode, preserving all
four tensor ranks. It can be chosen so that the roots of $G_H$ avoid
infinity. After this change, rescale $G_H$ so that
$g_H(t)=G_H(1,t)$ is monic, and write
\begin{equation*}
 g_H(t)=G_H(1,t)=\prod_{\alpha=1}^{\sigma}
                    (t-\zeta_\alpha)^{\mu_\alpha},
 \  \sum_{\alpha=1}^{\sigma}\mu_\alpha=\rho,
\end{equation*}
with $g_H$ monic and the $\zeta_\alpha$ distinct.
Thus $\sigma$ counts the distinct projective roots of $G_H$.
The symbols $h_j$ refer to the transformed moments.
Equivariance also follows by viewing inputs as homogeneous
binary polynomials of degree $n-1$: multiplication of $m$
inputs commutes with the common coordinate change.

\subsection{A moment model and support}
A functional $\lambda:\set{A}\to\CC$ on a finite-dimensional
commutative unital algebra is \emph{Frobenius} if
$(x,y)\mapsto\lambda(xy)$ is nondegenerate.
Products of subspaces mean the linear span of their products.

\begin{lemma}[Moment model and support]\label{lem:moment}
There is a Frobenius functional on $\set{A}=\CC[t]/(g_H)$
such that $\lambda([t^j])=h_j$ for $0\le j\le D$.
The actual support dimension of $\tens{H}$ is
$b=\min(n,\rho)$. On its support, the tensor has the form
\begin{equation*}
 \tens{H}(p_1,\ldots,p_m)=\lambda(p_1\cdots p_m),
 \  p_i\in\set{V}=\CC[t]_{<b}\subseteq\set{A}.
\end{equation*}
With $a=\rho-b$, one has
$a\ge0, 2a\le(m-2)(b-1)$ and $\set{V}^{m-1}=\set{A}.$ 
If $a>0$, then we have $b=n$. If $m\ge4$, we also have
$\set{V}^{m-2}=\set{A}$.
\end{lemma}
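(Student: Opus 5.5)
The plan is to build $\lambda$ from the apolar relation and then read off every other claim from the size of the catalecticant. Since $G_H\in\set{I}_\rho$ and $g_H(t)=G_H(1,t)$ is monic of degree $\rho$ after the coordinate change, the defining condition of $\set{I}_\rho$ says exactly that the sequence $(h_j)_{0\le j\le D}$ satisfies the linear recurrence with characteristic polynomial $g_H$:
\[
\sum_{j=0}^{\rho} g_j h_{i+j}=0,\qquad 0\le i\le D-\rho .
\]
We extend $(h_j)$ to all $j\ge0$ by this recurrence and define $\lambda$ on $\CC[t]$ by $\lambda(t^j)=h_j$. The recurrence gives $\lambda(t^i g_H)=0$ for all $i\ge0$, so $\lambda$ vanishes on the ideal $(g_H)$ and descends to $\set{A}=\CC[t]/(g_H)$ with $\lambda([t^j])=h_j$ for $j\le D$.

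\emph{Nondegeneracy.} The kernel of $(x,y)\mapsto\lambda(xy)$ is an ideal of $\set{A}$, hence of the form $(q)/(g_H)$ for a proper divisor $q$ of $g_H$. Its Gram matrix in the basis $1,t,\ldots,t^{\rho-1}$ is $(h_{i+j})_{0\le i,j<\rho}$. If this matrix were singular, the sequence would satisfy a recurrence of order less than $\rho$, namely with characteristic polynomial $g_H/\gcd$. That recurrence holds on $[0,D]$ wherever it fits, so it would produce a nonzero element of $\set{I}_{d}$ for some $d<\rho$. This contradicts the minimality of $\rho$ from binary apolarity. The index bookkeeping is where I must check that the shorter recurrence really holds on the full range $0\le i\le D-d$ and not just on part of it; for this I will use $\rho\le\lfloor D/2\rfloor+1$. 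I expect this to be the delicate step.

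\emph{Support.} Mode one of $\tens{H}$ flattens to the $n\times n^{m-1}$ matrix whose columns involve $h_{i+k}$ for $0\le k\le (m-1)(n-1)$. This is the catalecticant with row index $i<n$ and column index $k\le (m-1)(n-1)$. Because $m\ge3$, its column range contains the middle range. Hence its rank is $\min(n,\rho)=b$: it is $\rho$ when $n\ge\rho$, by the recurrence and the nonsingular $\rho\times\rho$ block, and it is $n$ when $n<\rho$, since the rows are then independent by the minimality of $\rho$. The support in mode one is the image of $\CC[t]_{<n}\to\set{A}$, $p\mapsto [p]$, under the duality given by $\lambda$. We may therefore replace $\CC[t]_{<n}$ by $\set{V}=\CC[t]_{<b}$, since it has the same image in $\set{A}$ when $n\ge\rho$ and equals it when $b=n$. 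Then $\tens{H}(p_1,\ldots,p_m)=\lambda(p_1\cdots p_m)$ follows from the agreement of $\lambda$ with $h_j$ on degrees up to $D$.

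\emph{Numerical claims.} We have $a=\rho-b\ge0$ trivially, and if $a>0$ then $b<\rho$, so $b=n$. When $a>0$, the bound $2a\le(m-2)(b-1)$ becomes $2\rho\le 2n+(m-2)(n-1)=D+2$, which is $\rho\le D/2+1$. When $a=0$ the bound is immediate. The products $\set{V}^{k}$ are spanned by the classes of $t^j$ with $j\le k(b-1)$, so $\set{V}^{k}=\set{A}$ as soon as $k(b-1)\ge\rho-1$. For $k=m-1$ with $b=n$ this reads $(m-1)(n-1)\ge\rho-1$, which follows from $\rho\le D/2+1$ since $m\ge3$. For $b=\rho$ it holds trivially. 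For $m\ge4$ and $k=m-2$ we need $(m-2)(n-1)\ge D/2$, i.e.\ $m-2\ge m/2$, which holds.
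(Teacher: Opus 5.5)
Your proof is correct. Its skeleton matches the paper's: you define $\lambda$ through the $g_H$-recurrence, prove nondegeneracy, read the support off the algebra, and get the numerical claims from $\rho\le\lfloor D/2\rfloor+1$. Two steps, however, go differently.

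\emph{Nondegeneracy.} The paper observes that the middle catalecticant is the matrix of $(u,v)\mapsto\lambda(uv)$ pulled back through the surjections $\CC[t]_{\le\lfloor D/2\rfloor}\to\set{A}$ and $\CC[t]_{\le\lceil D/2\rceil}\to\set{A}$. This is where the paper uses $\rho\le\lfloor D/2\rfloor+1$. The pairing therefore has rank $\rho=\dim\set{A}$. You instead use that the radical is an ideal $(q)/(g_H)$ and invoke minimality of the apolar degree. Your route needs only that minimality.

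The step you flagged as delicate is not. Since $\lambda$ lives on all of $\CC[t]$ and agrees with $h_j$ for $j\le D$, one has $\sum_j q_jh_{i+j}=\lambda(qt^i)=0$ for exactly $0\le i\le D-\deg q$. So $q$ homogenizes to a nonzero element of $\set{I}_{\deg q}$, with no appeal to $\rho\le\lfloor D/2\rfloor+1$. One wording slip: the shorter recurrence has characteristic polynomial $q$ itself, the generator of the radical, not $g_H/\gcd$, which generates its annihilator.

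\emph{Support.} You compute the rank of the $n\times((m-1)(n-1)+1)$ catalecticant case by case. In the case $n<\rho$, a row dependence lands in $\set{I}_{n-1}$ precisely because $(m-1)(n-1)=D-(n-1)$. The paper instead notes that products of $m-1$ elements of $\CC[t]_{<n}$ already span $\set{A}$. Nondegeneracy then identifies the contraction kernel with $\ker(\CC[t]_{<n}\to\set{A})$ directly. This gives both the dimension $b$ and the moment form on the support in one step. Your version additionally needs the easy codimension comparison to conclude that these two kernels coincide.
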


\begin{proof}
Write $g_H(t)=\sum_{j=0}^{\rho}g_jt^j$ with $g_\rho=1$ and
define $\lambda([t^j])=h_j$ for $j<\rho$.
The apolar equations $\sum_{j=0}^{\rho}g_jh_{i+j}=0$ for
$0\le i\le D-\rho$ extend these identities to $j\le D$.
The middle catalecticant is the pullback of the pairing
$(u,v)\mapsto\lambda(uv)$ through two surjections onto $\set{A}$,
since both degree bounds are at least $\rho-1$.
Its rank is $\rho=\dim\set{A}$, so the pairing is nondegenerate.
This gives the moment model of \cite[Lemma~2.1]{holden2026exact}
directly from the binary apolar data.

Products of $m-1$ original inputs span $\set{A}$, since
$(m-1)(n-1)\ge\rho-1$.
Therefore the Frobenius pairing identifies the contraction kernel
with the kernel of $\CC[t]_{<n}\to\set{A}$.
The support dimension is $b=\min(n,\rho)$.
Passing to the image preserves all four ranks by the support
invariance established in \cref{sec:criterion}. 
If $a=0$, then we have $\set{V}=\set{A}$ and the remaining claims follow.
Otherwise we have $b=n$, and the apolar degree bound gives
$2a\le(m-2)(b-1)$.
For $m\ge4$, we also have
\[
\rho-1\le\lfloor m(b-1)/2\rfloor\le(m-2)(b-1).
\] 
Thus products of $m-2$ inputs already span $\set{A}$.
\end{proof}

To include the scalar case $n=1$, set
$(b,\rho,\sigma,a)=(1,1,1,0)$ and $\set{A}=\set{V}=\CC$,
with $\lambda(z)=h_0z$. 

\subsection{The border upper bound and the coefficient pairing}
The linear Hankel embedding sends the binary power
$(\alpha x+\beta y)^D$ to the symmetric tensor
$(\alpha^{n-1},\alpha^{n-2}\beta,\ldots,\beta^{n-1})^{\otimes m}.$ 
Applying it to binary border decompositions gives
\begin{equation}\label{eq:border-upper}
\br(\tens{H})\le\bs(\tens{H})\le\rho.
\end{equation}
This is the upper-bound argument of
\cite[Theorem~2.2, Lemma~3.2, and Section~4]{nie2019hankel};
it uses complex binary apolarity and does not require an exact
Hankel rank formula. 

We recall the standard coefficient pairing on a monogenic quotient algebra and record its orthogonality property for the degree filtration. We include the short proof for completeness.

\begin{lemma}[Coefficient pairing]\label{lem:coefficient-pairing}
Let $g$ be monic of degree $M\ge1$ and set
$\set{B}=\CC[t]/(g)$, represented by polynomials of degree less
than $M$. The functional $\lambda_0(q)=[t^{M-1}]q$ is Frobenius.
Here $\perp$ denotes orthogonality in $\set{B}$ for the pairing
$(u,v)\mapsto\lambda_0(uv)$. For $0\le a_0<M$, we have
\begin{equation}\label{eq:window-orthogonal}
(\CC[t]_{<M-a_0})^\perp=\CC[t]_{<a_0}.
\end{equation}
\end{lemma}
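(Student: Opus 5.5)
The plan is to prove both claims of \cref{lem:coefficient-pairing} directly in the monomial basis $1,t,\ldots,t^{M-1}$ of $\set{B}$, by looking at the Gram matrix of the pairing $(u,v)\mapsto\lambda_0(uv)$.

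\emph{Step 1: triangular structure.} For $0\le i,j<M$ consider $\lambda_0(t^{i+j})$. If $i+j<M-1$, the product $t^{i+j}$ is already reduced and its coefficient of $t^{M-1}$ is zero. If $i+j=M-1$, the coefficient is $1$. Only for $i+j\ge M$ does the reduction modulo the monic $g$ enter, and these entries are never needed below. Reversing the order of one basis, the Gram matrix $(\lambda_0(t^{i+j}))$ is therefore triangular with ones on the antidiagonal. Its determinant is $\pm1$, so the pairing is nondegenerate and $\lambda_0$ is Frobenius.

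\emph{Step 2: the inclusion $\CC[t]_{<a_0}\subseteq(\CC[t]_{<M-a_0})^\perp$.} Take $u\in\CC[t]_{<a_0}$ and $v\in\CC[t]_{<M-a_0}$. Then $\deg(uv)\le (a_0-1)+(M-a_0-1)=M-2$. So $uv$ needs no reduction modulo $g$ and has zero coefficient on $t^{M-1}$. Hence $\lambda_0(uv)=0$.

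\emph{Step 3: equality by dimensions.} Nondegeneracy gives $\dim W^\perp=M-\dim W$ for every subspace $W\subseteq\set{B}$. Applied to $W=\CC[t]_{<M-a_0}$, whose dimension is $M-a_0$ because $a_0\ge0$, this gives $\dim W^\perp=a_0$. That equals $\dim\CC[t]_{<a_0}$, so the inclusion of Step 2 is an equality, which proves \eqref{eq:window-orthogonal}. The edge case $a_0=0$ is consistent: $W=\set{B}$ and $W^\perp=0$.

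There is no real obstacle here. The only point needing care is that reduction modulo $g$ is needed only for degrees at least $M$, and both steps stay strictly below that.
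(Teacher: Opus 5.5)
Your proof is correct and follows the paper's argument. Nondegeneracy comes from $\lambda_0(t^{i+j})$ vanishing for $i+j<M-1$ and equaling $1$ for $i+j=M-1$; the paper phrases this as a nonzero remainder of degree $j$ pairing nontrivially with $t^{M-1-j}$, which is the row-by-row form of your triangular Gram matrix. Orthogonality holds because products have degree at most $M-2$, and equality then follows from the complementary dimensions. One cosmetic slip: after reversing one basis the ones lie on the diagonal rather than the antidiagonal, though the determinant is $\pm1$ either way.
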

\begin{proof}
A nonzero remainder of degree $j$ pairs nontrivially with
$t^{M-1-j}$, proving nondegeneracy. Products of elements in
the two displayed spaces have degree at most $M-2$, proving
orthogonality. Their dimensions are complementary.
\end{proof}
\subsection{The Hankel rank formula}
We use the following formula from
\cite[Theorem~1.1]{holden2026exact}, written in the moment coordinates
of \cref{lem:moment}.

\begin{proposition}[Hankel rank formula]\label{prop:upper}
Retain the notation of Lemma~\ref{lem:moment}, with
$b=\min(n,\rho)$ and $a=\rho-b$. Set
\begin{equation}\label{eq:U}
U\eqdef\min\bigl\{(m-1)b-a-(m-2),\
(m-1)\rho-(m-2)\sigma\bigr\}.
\end{equation}
Then $r(\tens{H})=s(\tens{H})=U$.
Moreover, the moment parameters satisfy
\begin{equation}\label{eq:first-threshold}
(m-1)b-a-(m-2)\ge\rho\ge b.
\end{equation}
\end{proposition}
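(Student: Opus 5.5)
The plan is to take the equality $r(\tens{H})=s(\tens{H})=U$ directly from \cite[Theorem~1.1]{holden2026exact}, so the work is in matching notation, and then to derive \eqref{eq:first-threshold} from \cref{lem:moment}. Both ranks are unchanged by the common binary coordinate change of \cref{sec:moment} and by passing to the support $\set{V}=\CC[t]_{<b}$ (support invariance, \cref{sec:criterion}). So it suffices to compute them for the moment model $\tens{H}(p_1,\ldots,p_m)=\lambda(p_1\cdots p_m)$ on $\set{V}\subseteq\set{A}=\CC[t]/(g_H)$.

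This model is precisely the one in \cite[Lemma~2.1]{holden2026exact}, as the proof of \cref{lem:moment} already records. I would then identify Holden's parameters with ours: the algebra dimension is $\rho$, the support dimension is $b$, the excess is $a=\rho-b$, and the number of distinct roots of the chosen apolar generator is $\sigma$. With these identifications his formula reads as the minimum in \eqref{eq:U}. The scalar case $n=1$, with $(b,\rho,\sigma,a)=(1,1,1,0)$, gives $U=1$, which is correct for a nonzero scalar.

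The step I expect to need care is the choice of $G_H$ when $\dim\set{I}_\rho\ge2$. This can happen only at the top value $\rho=\lfloor D/2\rfloor+1$, and there $\sigma$ depends on which generator we pick. In that case I would choose $G_H$ squarefree, so that $\sigma=\rho$, and then $U=\rho$ by the inequality proved below. A squarefree choice exists because a common root of all forms in $\set{I}_\rho$ would let us divide out a linear factor, contradicting the minimality of $\rho$. Hence the pencil has no base points, and its generic member is squarefree. When $\dim\set{I}_\rho=1$, the generator $G_H$ is unique up to scaling, so $\sigma$ is well defined and coincides with Holden's invariant.

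For \eqref{eq:first-threshold}, the inequality $\rho\ge b$ holds because $b=\min(n,\rho)$. For the other inequality, substitute $\rho=a+b$. The claim
\[
(m-1)b-a-(m-2)\ge a+b
\]
is then equivalent to $2a\le(m-2)(b-1)$, which is exactly the bound stated in \cref{lem:moment}.
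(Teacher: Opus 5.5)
Your plan is the paper's. It quotes $r(\tens{H})=s(\tens{H})=U$ from \cite[Theorem~1.1]{holden2026exact} and gets \eqref{eq:first-threshold} as exactly $(m-2)(b-1)-2a\ge0$ from \cref{lem:moment}; the paper also carries out the matching you only assert: Holden's first entry $D-\rho+2$ equals $(m-1)b-a-(m-2)$ when $\rho>n$, and for $\rho\le n$ the $\sigma$-entry is no larger than either version. Your squarefree re-choice of $G_H$ is unnecessary, and its minimality justification is incomplete, since $\ell g$ being apolar does not make $g$ apolar; a two-dimensional $\set{I}_\rho$ (with $n\ge2$) forces $D$ even and $\rho=D/2+1>n$, so the first entry already equals $\rho$ and $U=\rho$ for every choice of $G_H$, which is what the fixed notation of \cref{lem:moment} actually requires (the paper simply cites Holden for independence of the choice).
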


To compare the formulas, note that $b=n$ when $\rho>n$.
The first entry is then $m(n-1)-\rho+2$.
If $\rho\le n$, then we have $b=\rho$ and $a=0$, and the second
entry is no larger than either version of the first.
Finally, subtracting $\rho$ from the first entry gives
$$(m-2)(b-1)-2a\ge0,$$ 
proving \eqref{eq:first-threshold}.
The value is independent of the choice of a minimum apolar polynomial
by \cite[Theorem~1.1]{holden2026exact}.

\section{A partition lower bound}\label{sec:partition}

\subsection{Product growth with a Frobenius constraint}
For a subspace $\set{W}$ of a commutative algebra $\set{B}$, write
$\Stab_{\set{B}}(\set{W})\eqdef
\{z\in\set{B}:z\set{W}\subseteq\set{W}\}$.
Recall that an algebra is reduced if its only nilpotent element is zero.
The following is a consequence of
\cite[Lemma~4.2 and Corollary~4.3]{holden2026exact}, whose proof uses the
transformation argument of \cite[Lemma~4.2]{beck2017additive}.

\begin{lemma}[Constrained product growth]\label{lem:growth}
Let $m\ge2$ and let $\set{B}=\set{A}\times\CC^R$, where
$\set{A}$ is a finite-dimensional commutative complex algebra.
Let $\set{U}_1,\ldots,\set{U}_m\subseteq\set{B}$ contain $1$.
Suppose every product omitting one input space projects onto $\set{A}$.
Assume that a functional $\Lambda$ annihilates
$\set{W}=\prod_{i=1}^m\set{U}_i$ and that its restriction to
$\set{A}$ is Frobenius.
Then $\Stab_{\set{B}}(\set{W})$ is reduced.
If this stabilizer equals $\CC1$, then
\begin{equation*}
\dim\set{W}\ge\sum_{i=1}^m\dim\set{U}_i-(m-1).
\end{equation*}
\end{lemma}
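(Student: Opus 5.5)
The statement has two parts. The reducedness claim follows directly from the Frobenius hypothesis. The growth bound I would reduce, by induction on $m$, to a two-set linear Kneser inequality, proved with the transformation argument of \cite[Lemma~4.2]{beck2017additive}.

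\emph{Reducedness.} Write $\Lambda=\Lambda_{\set{A}}+\Lambda_R$ along $\set{B}=\set{A}\times\CC^R$. Let $z=(z_{\set{A}},z_R)\in\Stab_{\set{B}}(\set{W})$ be nilpotent.
\begin{enumerate}
\item Since $\CC^R$ is reduced, $z_R=0$.
\item Since $1\in\set{U}_m$, we have $\set{W}\supseteq\prod_{i<m}\set{U}_i$. By hypothesis this product projects onto $\set{A}$, so $\set{W}$ projects onto $\set{A}$.
\item For every $w\in\set{W}$ we have $zw\in\set{W}$. Hence $0=\Lambda(zw)=\Lambda_{\set{A}}(z_{\set{A}}w_{\set{A}})$, because the $\CC^R$-component of $zw$ is $0$.
\item As $w_{\set{A}}$ ranges over all of $\set{A}$, nondegeneracy of $\Lambda_{\set{A}}$ forces $z_{\set{A}}=0$.
\end{enumerate}
So the stabilizer has no nonzero nilpotents. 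This also shows it meets $\set{A}\times0$ trivially. A reduced finite-dimensional commutative complex algebra is a product of copies of $\CC$, so the stabilizer is spanned by orthogonal idempotents. This is the structure the transform argument below uses.

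\emph{Growth bound, reduction to two sets.} Put $\set{P}_k=\prod_{i\le k}\set{U}_i$. If $z\set{P}_k\subseteq\set{P}_k$, then $z\set{W}=(z\set{P}_k)\prod_{i>k}\set{U}_i\subseteq\set{W}$. Hence $\Stab(\set{P}_k\set{U}_{k+1})\subseteq\Stab(\set{W})=\CC1$ for every $k$. By induction on $k$, it therefore suffices to prove the two-set inequality
\[
\dim(\set{X}\set{Y})\ge\dim\set{X}+\dim\set{Y}-1
\]
for subspaces $\set{X},\set{Y}\ni1$ of $\set{B}$ with $\Stab(\set{X}\set{Y})=\CC1$. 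We apply it with $\set{X}=\set{P}_k$, $\set{Y}=\set{U}_{k+1}$, and then sum over $k$ to get $\dim\set{W}\ge\sum_i\dim\set{U}_i-(m-1)$. Each $\set{P}_k$ contains $1$.

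\emph{Two-set inequality.} I would argue by a minimal counterexample, minimizing $\dim\set{Y}$. Choose $y\in\set{Y}$ and form the Dyson/$e$-transform
\[
\set{X}'=\set{X}+y\set{X},\qquad \set{Y}'=\{v\in\set{Y}:\ v\set{X}\subseteq\set{X}\}+\dots,
\]
or Beck's variant of it. The transform keeps $\set{X}'\set{Y}'\subseteq\set{X}\set{Y}$ and $\dim\set{X}'+\dim\set{Y}'\ge\dim\set{X}+\dim\set{Y}$. Minimality then gives the inequality unless $\set{Y}\subseteq\Stab(\set{X})$ or the transform degenerates.

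In the degenerate case, $\Stab(\set{X})$ would contain a subalgebra larger than $\CC1$. That subalgebra would also stabilize $\set{X}\set{Y}$, contradicting $\Stab(\set{X}\set{Y})=\CC1$.

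\emph{Main obstacle.} The hardest step is making this transform work in a non-field algebra. Zero divisors in $\set{B}$ can make $\dim y\set{X}<\dim\set{X}$, so the dimension bookkeeping can fail. I would handle this by always choosing $y$ to be a unit in $\set{Y}$. Generic elements of $\set{Y}$ are units, since $1\in\set{Y}$ and units form a Zariski-open set. The step where some $y$-transform leaves $\set{Y}$ unchanged would be settled by the reducedness established above: it lets the idempotents of the stabilizer be split off, exactly as in \cite[Lemma~4.2 and Corollary~4.3]{holden2026exact}.
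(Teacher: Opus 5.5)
Your proof that $\Stab_{\set{B}}(\set{W})$ is reduced is correct. A nilpotent stabilizer element has zero $\CC^R$-part, $\set{W}$ projects onto $\set{A}$, and $\Lambda(z\set{W})=0$ together with the Frobenius property kills its $\set{A}$-part. The telescoping observation $\Stab(\set{P}_{k+1})\subseteq\Stab(\set{W})$ is also valid.

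The gap is in the growth bound. You reduce it to the two-set inequality $\dim(\set{X}\set{Y})\ge\dim\set{X}+\dim\set{Y}-1$ under the sole hypothesis $\Stab(\set{X}\set{Y})=\CC1$, and that statement is false in nonreduced algebras. Here is a counterexample.
\begin{enumerate}
\item Let $F=x_1^2y_1+x_1x_2y_2+x_2^2y_3$ and $G=x_1^2z_1+x_1x_2z_2+x_2^2z_3$. Let $\set{C}$ be the apolar algebra of $\Span(F,G)$: the polynomial ring in the eight operators $\partial_{x_1},\ldots,\partial_{z_3}$ modulo those annihilating both $F$ and $G$.
\item Since $\dim\set{C}_i$ is the dimension of the span of the order-$(3-i)$ partials of $F$ and $G$, the Hilbert function is $(1,8,7,2)$. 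The second partials give all eight variables. The first partials span only seven quadrics, because $x_1^2,x_1x_2,x_2^2$ are shared.
\item Take $\set{X}=\set{Y}=\CC1\oplus\set{C}_1$. Then $\set{X}\set{Y}=\set{C}_0\oplus\set{C}_1\oplus\set{C}_2$ has dimension $16<9+9-1$.
\item This product is graded, so its stabilizer is graded. A nonzero stabilizing element of degree one would have to annihilate $\set{C}_2$, and one of degree two would have to annihilate $\set{C}_1$. Neither can happen, because the multiplication $\set{C}_1\times\set{C}_2\to\set{C}_3$ in the apolar algebra of a space of cubics has trivial kernel on both sides. Hence $\Stab(\set{X}\set{Y})=\CC1$.
\item Embedding $\set{C}$ in the Frobenius algebra $\set{C}\ltimes\set{C}^*$ leaves this stabilizer unchanged. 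So the failure persists even when $\set{A}$ is Frobenius.
\end{enumerate}
The hypotheses that $\Lambda$ annihilates $\set{W}$ and that omitted products project onto $\set{A}$ must therefore be used in the growth argument itself, not only to prove reducedness.

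The transform sketch does not close this gap. Your pair $\set{X}+y\set{X}$, $\{v\in\set{Y}:v\set{X}\subseteq\set{X}\}+\cdots$ is not fully specified, and nothing forces $\dim\set{X}'+\dim\set{Y}'\ge\dim\set{X}+\dim\set{Y}$ for it. The linear Dyson transform is $\set{X}+a\set{Y}$, $\set{Y}\cap a^{-1}\set{X}$ for a unit $a\in\set{X}$.

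More fundamentally, the transformed pair only satisfies $\set{X}'\set{Y}'\subseteq\set{X}\set{Y}$. Its stabilizer can therefore exceed $\CC1$, and your minimal-counterexample hypothesis cannot be applied to it. Controlling such enlarged stabilizers is the whole content of a Kneser-type theorem, and the example above shows that this control is unavailable without further input.

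Your closing appeal to splitting off idempotents has two problems. It is vacuous once you have assumed the stabilizer is $\CC1$. It is also unsupported for the intermediate products, because your reducedness argument needs the space to project onto $\set{A}$, and transformed products need not do so.

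The paper itself does not prove this step. It imports the bound from \cite[Lemma~4.2 and Corollary~4.3]{holden2026exact}. A complete argument along your lines would have to carry the Frobenius annihilation and the omitted-product surjectivity through every transformation step.
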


\subsection{The partition associated with a CP decomposition}
Let $\set{A}=\prod_{\alpha\in \set{I}}\set{A}_\alpha$ be the
decomposition of a commutative Frobenius algebra into
local factors, with functional $\lambda$.
Suppose $\set{V}_1,\ldots,\set{V}_m\subseteq\set{A}$
contain units and
\begin{equation}\label{eq:omitted-products}
 \prod_{i\ne j}\set{V}_i=\set{A}\ (1\le j\le m).
\end{equation}
For a nonempty $\set{J}\subseteq \set{I}$, write
$\set{A}_{\set{J}}=\prod_{\alpha\in \set{J}}\set{A}_\alpha$ and let
$\pi_{\set{J}}$ be the projection onto it. Put
$M_{\set{J}}=\dim\set{A}_{\set{J}}$ and $d_i(\set{J})=\dim\pi_{\set{J}}\set{V}_i.$ 

\begin{proposition}[Partition lower bound]\label{prop:partition}
The multilinear form
$\tens{T}(v_1,\ldots,v_m)=\lambda(v_1\cdots v_m)$
on $\prod_{i=1}^m\set{V}_i$ satisfies
\begin{equation}\label{eq:partition}
 r(\tens{T})\ge
 \min_{\set{P}}\sum_{\set{J}\in\set{P}}
 \left(\sum_{i=1}^m d_i(\set{J})-M_{\set{J}}-(m-2)\right),
\end{equation}
where the minimum is over partitions of the index set $\set{I}$
of local factors.
\end{proposition}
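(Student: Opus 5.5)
Take a minimum CP decomposition $\tens{T}=\sum_{h=1}^R\phi_{h,1}\otimes\cdots\otimes\phi_{h,m}$ with $R=r(\tens{T})$, where each $\phi_{h,i}$ is a linear functional on $\set{V}_i$. The plan is to realize this decomposition inside the enlarged algebra $\set{B}=\set{A}\times\CC^R$ so that \cref{lem:growth} applies. Define $\set{U}_i\subseteq\set{B}$ as the image of the graph-type map $v\mapsto(v,\phi_{1,i}(v),\ldots,\phi_{R,i}(v))$ on $\set{V}_i$. Since $\set{V}_i$ contains a unit, first rescale so that $1\in\set{V}_i$; the decomposition can be renormalized so that $\phi_{h,i}(1)=1$. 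Terms with some $\phi_{h,i}(1)=0$ would need a small perturbation or separate handling, and I expect this normalization to be a technical nuisance. Under it, $\set{U}_i$ contains $1_{\set{B}}$. Let $\Lambda(x,y)=\lambda(x)-\sum_h y_h$. Then $\Lambda$ annihilates every product $u_1\cdots u_m$ with $u_i\in\set{U}_i$, since $\lambda(v_1\cdots v_m)=\sum_h\prod_i\phi_{h,i}(v_i)$. Hence $\Lambda$ annihilates $\set{W}=\prod_i\set{U}_i$. The restriction of $\Lambda$ to $\set{A}$ is $\lambda$, which is Frobenius, and \eqref{eq:omitted-products} gives the required projection property. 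Also $\dim\set{U}_i=\dim\set{V}_i$.

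Next, use \cref{lem:growth} to conclude that $\Stab_{\set{B}}(\set{W})$ is reduced, hence spanned by orthogonal idempotents. Each such idempotent, projected to $\set{A}$, is a sum of local-factor idempotents indexed by some $\set{J}\subseteq\set{I}$, together with a subset of the $R$ coordinates. The minimal idempotents give a partition of $\set{I}$; the reducedness forces the $\set{A}$-part to be a union of whole local factors. This yields $\set{P}$, with each block $\set{J}$ paired with a set $\set{R}_{\set{J}}$ of rank-one indices, plus possibly blocks with only $\CC^R$ coordinates, which are harmless. Multiplying by each idempotent $e_{\set{J}}$ splits $\set{W}$ and all the $\set{U}_i$. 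On each piece $e_{\set{J}}\set{B}=\set{A}_{\set{J}}\times\CC^{\set{R}_{\set{J}}}$ the stabilizer is $\CC e_{\set{J}}$, and the hypotheses persist: projections of omitted products still cover $\set{A}_{\set{J}}$, and $\lambda$ restricted to $\set{A}_{\set{J}}$ is Frobenius. \cref{lem:growth} then gives
\[
\dim e_{\set{J}}\set{W}\ge\sum_i\dim e_{\set{J}}\set{U}_i-(m-1)\ge\sum_i d_i(\set{J})-(m-1).
\]

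Finally, compare dimensions. Since $\Lambda$ is nonzero on $e_{\set{J}}\set{B}$ (it is Frobenius on the $\set{A}_{\set{J}}$ part), $\Lambda$ annihilates $e_{\set{J}}\set{W}$. This gives $\dim e_{\set{J}}\set{W}\le M_{\set{J}}+|\set{R}_{\set{J}}|-1$. Combining with the lower bound yields
\[
|\set{R}_{\set{J}}|\ge\sum_i d_i(\set{J})-M_{\set{J}}-(m-2).
\]
Summing over the blocks of $\set{P}$ gives $R\ge\sum_{\set{J}\in\set{P}}(\cdots)$, which is at least the minimum in \eqref{eq:partition}.

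The main obstacle is the idempotent-splitting step. Its two parts are showing that the reduced stabilizer's idempotents respect the local-factor decomposition of $\set{A}$, and checking that the omitted-product hypothesis and the unit condition survive passage to each block. I would first try to argue that the $\set{A}$-component of any idempotent in $\set{B}$ is an idempotent of $\set{A}$, hence a sum of local unit idempotents; this part is automatic. The remaining care is in the normalization $\phi_{h,i}(1)\ne0$. To handle it, I would use a generic unit in place of $1$ (a generic element of each $\set{V}_i$ is a unit), so that only finitely many conditions need to be avoided.
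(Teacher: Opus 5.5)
Your proposal is correct and is essentially the paper's proof. It uses graph spaces in $\set{A}\times\CC^R$ rescaled by a generic unit of each graph, which is exactly how the paper settles your normalization concern; this needs every factor functional of a minimum decomposition to be nonzero, which minimality guarantees. It then gets reducedness of the stabilizer from \cref{lem:growth}, splits along its primitive idempotents, and compares the blockwise bound from \cref{lem:growth} with the annihilating functional.

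The one minor difference is that you discard blocks consisting only of decomposition coordinates instead of ruling them out. That is legitimate for a lower bound and lets you bypass the paper's surjectivity of $\set{W}$ onto $\CC^R$.
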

\begin{proof}
\emph{Step 1: a graph representation.}
Choose a minimum CP decomposition of length $R$, written as
$\tens{T}=\sum_{\ell=1}^R\bigotimes_{i=1}^m\beta_{\ell i}$.
All factor functionals are nonzero.
In $\set{B}=\set{A}\times\CC^R$ take the graph spaces
\[
 \widetilde{\set{U}}_i
 =\{(v,\beta_{1i}(v),\ldots,\beta_{Ri}(v)):
                                      v\in\set{V}_i\}.
\]
Their product is annihilated by
$\Lambda_0(x,\vect{z})=\lambda(x)-\sum_{\ell=1}^Rz_\ell$.
Each graph contains a unit $\gamma_i$: vanishing at any residue
of $\set{A}$ or any coordinate of $\CC^R$ defines a proper linear
subspace of $\set{V}_i$, and finitely many such subspaces cannot
cover it. We call these the moment and decomposition coordinates.
Set
$\set{U}_i=\gamma_i^{-1}\widetilde{\set{U}}_i$ and 
$\Lambda(x)=\Lambda_0(\gamma_1\cdots\gamma_m x).$ 
Then $1\in\set{U}_i$ and $\Lambda$ annihilates
$\set{W}=\prod_i\set{U}_i$.
The restriction of $\Lambda$ to $\set{A}$ remains
Frobenius and its coefficient on every decomposition
coordinate is nonzero.
Multiplication by units preserves each projected
dimension $d_i(\set{J})$ and the hypotheses on omitted products.

\emph{Step 2: surjective projections.}
The moment projection of $\set{W}$ is surjective by
\eqref{eq:omitted-products}.
The projection onto $\CC^R$ is also surjective.
Indeed, the simple tensors in a minimum CP decomposition
are linearly independent: a dependence would remove one
term after rescaling the others.
Dually, their simultaneous evaluation map has image
$\CC^R$. The normalization changes its coordinates
only by nonzero scalar factors.
By \cref{lem:growth}, the stabilizer
$\set{D}=\Stab_{\set{B}}(\set{W})$ is reduced.

\emph{Step 3: the stabilizer blocks.}
For a primitive idempotent $e$ of $\set{D}$ put
$\set{B}_e=e\set{B}$ and $\set{W}_e=e\set{W}$.
Every block contains both moment factors and decomposition
coordinates. Otherwise the surjectivity onto the surviving
side would imply $\set{W}_e=\set{B}_e$, contradicting
$\Lambda(\set{W}_e)=0$: on a block consisting of decomposition
coordinates, the functional has nonzero coefficients; on a block
consisting of moment factors, its restriction is Frobenius. 
The moment parts therefore form a partition $\set{P}$
of $\set{I}$. Its block for $\set{J}$ is
$\set{A}_{\set{J}}\times\CC^{R_{\set{J}}}$, with $\sum_{\set{J}}R_{\set{J}}=R$.
Moreover, we have
\[
 \set{W}_e=\prod_{i=1}^m(e\set{U}_i),\ 
 \Stab_{\set{B}_e}(\set{W}_e)=e\set{D}=\CC e.
\]
For the stabilizer equality, extend any block
stabilizer by zero: it stabilizes $\set{W}$ and hence
belongs to $e\set{D}$. The other inclusion is immediate.

\emph{Step 4: the cost of a block.}
The spaces $e\set{U}_i$ contain the unit $e$ of
$\set{B}_e$, and every omitted product projects onto
$\set{A}_{\set{J}}$. The restricted functional is Frobenius
on the moment part.
Therefore \cref{lem:growth} applies to these projected spaces.
Since $\dim(e\set{U}_i)\ge d_i(\set{J})$, we obtain
\[
 \sum_{i=1}^m d_i(\set{J})-(m-1)
 \le\dim\set{W}_e\le M_{\set{J}}+R_{\set{J}}-1.
\]
The upper inequality uses the nonzero annihilating
functional on $\set{B}_e$.
Rearrange and sum over the actual partition to obtain
a lower bound for $R$. Taking the minimum over all
partitions proves \eqref{eq:partition}.
\end{proof}

The graph and stabilizer construction adapts the partition
framework of \cite[Section~5]{holden2026exact}.
When $\set{V}_i=\set{A}$ for every $i$, \eqref{eq:partition}
reduces to
$r(\tens{T})\ge(m-1)\dim\set{A}-(m-2)|\set{I}|$,
with the minimum attained at the partition into individual
local factors.
Under the Frobenius identification $\set{A}\simeq\set{A}^*$,
this is the multilinear Alder--Strassen lower bound for
$(m-1)$-fold multiplication
\cite[Theorem~19]{blaser2023multilinear}.
Keeping the separate dimensions $d_i(\set{J})$ allows
restricted and unequal input spaces, as needed for the
uniform restriction bound proved below.

\section{Polynomial subspaces and input restrictions}
\label{sec:restriction}

\subsection{A dimension estimate for block projections}
Use the coefficient pairing from \cref{lem:coefficient-pairing}
on $\set{B}=\CC[t]/(g)$, where $g$ is monic of degree $M$.

\begin{lemma}[Block count for a polynomial subspace]\label{lem:window}
Let $0\le a_0<M$, let $\set{S}\subseteq\CC[t]_{<M-a_0}$ have dimension $k$, and
put $z=M-a_0-k$. Partition the local factors of $\set{B}$ into
$\ell$ nonempty parts. If
\[
 \set{L}=\bigoplus_{\set{J}}\pi_{\set{J}}\set{S},\ 
 \eta=\dim\set{L}-k,
\]
then
$\eta\ge a_0$ or 
$\ell\le a_0+z.$ 
\end{lemma}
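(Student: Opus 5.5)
The plan is to dualize with the coefficient pairing of \cref{lem:coefficient-pairing} on $\set{B}=\CC[t]/(g)$. We assume $\eta<a_0$ and deduce $\ell\le a_0+z$. For each part $\set{J}$ let $e_{\set{J}}\in\set{B}$ be the idempotent with $e_{\set{J}}\set{B}=\set{B}_{\set{J}}$, so that $\pi_{\set{J}}$ is multiplication by $e_{\set{J}}$. Then $\set{L}=\sum_{\set{J}}e_{\set{J}}\set{S}$, and $\set{L}$ is stable under multiplication by every $e_{\set{J}}$. Since the pairing is $(u,v)\mapsto\lambda_0(uv)$ in a commutative algebra, multiplication by $e_{\set{J}}$ is self-adjoint. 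Hence
\[
\set{L}^\perp=\{w\in\set{B}: e_{\set{J}}w\in\set{S}^\perp\ \text{for all }\set{J}\}.
\]
This space is again stable under all $e_{\set{J}}$. By nondegeneracy,
\[
\dim\set{L}^\perp=M-k-\eta=a_0+z-\eta .
\]

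Next I use the window orthogonality \eqref{eq:window-orthogonal}. Because $\set{S}\subseteq\CC[t]_{<M-a_0}$, we get $\set{S}^\perp\supseteq\CC[t]_{<a_0}$. From $\set{S}\subseteq\set{L}$ we also have $\set{L}^\perp\subseteq\set{S}^\perp$, and $\dim\set{S}^\perp-\dim\set{L}^\perp=\eta$. Consequently
\[
\set{W}=\set{L}^\perp\cap\CC[t]_{<a_0}
\]
has codimension at most $\eta$ in $\CC[t]_{<a_0}$. Since $\eta+1\le a_0$, the intersection $\set{W}\cap\CC[t]_{<\eta+1}$ has dimension at least $1$. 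So there is a nonzero $w\in\set{L}^\perp$ represented by a polynomial of degree at most $\eta<M$.

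The key counting step is as follows. Let $g=\prod_{\set{J}}g_{\set{J}}$, where $g_{\set{J}}$ is the product of the primary factors of $g$ belonging to the local factors in $\set{J}$. Then $e_{\set{J}}w=0$ exactly when $g_{\set{J}}$ divides $w$. The $g_{\set{J}}$ are pairwise coprime and each has degree at least $1$. Hence the number of parts on which $w$ vanishes is at most $\deg w\le\eta$, because $w\neq0$ as a polynomial. For the remaining at least $\ell-\eta$ parts, the elements $e_{\set{J}}w$ are nonzero, lie in distinct blocks (so they are linearly independent), and belong to $\set{L}^\perp$ by its idempotent stability. Therefore $a_0+z-\eta=\dim\set{L}^\perp\ge\ell-\eta$, which gives $\ell\le a_0+z$.

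The main obstacle is the choice of $w$ in the second step. A naive choice of any nonzero element of $\set{W}$ only bounds the number of vanishing blocks by $a_0-1$, which is too weak. The point is to exploit the codimension $\eta$ to find an element of degree at most $\eta$, so that its loss of vanishing blocks exactly cancels the deficit $\eta$ in $\dim\set{L}^\perp$.
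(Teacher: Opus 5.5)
Your proof is correct and follows the paper's argument. You assume $\eta<a_0$, use the window orthogonality to get a subspace of $\CC[t]_{<a_0}$ orthogonal to $\set{L}$ of codimension at most $\eta$, bound the number of vanishing parts by a degree count against the pairwise coprime part polynomials, and count the nonzero block components inside $\set{L}^\perp$. The one minor variation is that the paper takes the gcd $q$ of the whole kernel $\set{N}$ and uses $\deg q\le a_0-\tau\le\eta$, whereas you pick a single nonzero kernel element of degree at most $\eta$ by intersecting with $\CC[t]_{<\eta+1}$; both give the same inequality $\ell-\eta\le\dim\set{L}^\perp=a_0+z-\eta$.
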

\begin{proof}
Suppose $\eta<a_0$. By \eqref{eq:window-orthogonal}, pairing
with $\CC[t]_{<a_0}$ vanishes on $\set{S}$, so the map
$\CC[t]_{<a_0}\to\set{L}^*$ has rank at most $\eta$.
Its kernel $\set{N}$ therefore has dimension
$\tau\ge a_0-\eta>0$.
Let $q=\gcd(g,\set{N})$, the monic greatest common divisor
of $g$ and a basis of $\set{N}$. Since all elements of
$\set{N}$ are divisible by $q$ and have degree less than $a_0$,
we have
$\deg q\le a_0-\tau.$ 
If $\pi_{\set{J}}\set{N}=0$, the polynomial defining that whole part
divides $q$. These part polynomials are pairwise coprime and
nonconstant. Consequently at most $a_0-\tau$ parts have zero
projection, and
$\sum_{\set{J}}\dim\pi_{\set{J}}\set{N}\ge\ell-a_0+\tau.$ 
Different algebra factors are orthogonal for $\lambda_0$.
Since $\set{L}$ is the direct sum of its projections,
the orthogonality $\set{N}\perp\set{L}$ implies
$\bigoplus_{\set{J}}\pi_{\set{J}}\set{N}\subseteq\set{L}^{\perp}$.
Nondegeneracy now gives
\[
 a_0+z-\eta=M-\dim\set{L}
 \ge\sum_{\set{J}}\dim\pi_{\set{J}}\set{N}
 \ge\ell-a_0+\tau\ge\ell-\eta.
\]
Cancellation proves $\ell\le a_0+z$.
\end{proof}

\subsection{The restriction bound}
Return to the moment data in \cref{lem:moment} and the
rank value $U$ in \eqref{eq:U}.

\begin{proposition}[Uniform CP rank restriction bound]\label{prop:restriction}
Let $m\ge3$.
For every subspace $\set{K}\subseteq\set{V}$ of codimension $u$
in any one input mode, we have
\begin{equation}\label{eq:restriction}
 r\bigl(\tens{H}|_{\set{K}\times\set{V}\times\cdots\times\set{V}}\bigr)
 \ge U-(m-1)u.
\end{equation}
By \cref{prop:upper}, the right side equals
$r(\tens{H})-(m-1)u$.
\end{proposition}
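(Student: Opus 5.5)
We apply \cref{prop:partition} to the restricted form. Take $\set{V}_1=\set{K}$ and $\set{V}_i=\set{V}$ for $i\ge2$, all inside $\set{A}=\CC[t]/(g_H)=\prod_{\alpha}\set{A}_\alpha$, with local factors $\set{A}_\alpha=\CC[t]/((t-\zeta_\alpha)^{\mu_\alpha})$.

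\emph{Preliminary reductions.} The hypotheses of \cref{prop:partition} need units in each $\set{V}_i$ and \eqref{eq:omitted-products}. For $i\ge2$ we have $1\in\set{V}$. For $\set{K}$, a generic element of $\set{K}$ avoids the finitely many proper subspaces where a residue vanishes, so it is a unit, provided $\set{K}$ is not contained in one such subspace. If $\set{K}$ is contained in one, we first reduce: dropping the corresponding local factor (or equivalently using the fact that the bound is trivial when $u$ is large) should lose at most a controlled amount. I would handle this by noting that such $\set{K}$ has $u\ge1$, and compare with the rank after removing a factor via \cref{lem:window}. For the omitted products, those omitting mode $1$ are $\set{V}^{m-1}=\set{A}$ by \cref{lem:moment}. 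Those omitting another mode are $\set{K}\set{V}^{m-2}$. For $m\ge4$ this contains $\gamma\set{V}^{m-2}=\gamma\set{A}=\set{A}$. For $m=3$, however, $\set{K}\set{V}$ may fail to be all of $\set{A}$. In that case I would instead apply \cref{prop:partition} after replacing $\set{A}$ by the subalgebra-compatible quotient. The safer route is to verify surjectivity directly: $\set{V}^{2}=\set{A}$ already needs $\rho\le 2b-1$, and $\set{K}\set{V}$ has codimension at most $u$. The loss of $u$ is then absorbed in the budget, since the right side allows a deficit of $(m-1)u=2u$.

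\emph{Main estimate.} For a partition $\set{P}$ of the local factors into $\ell$ parts, we must show
\[
\sum_{\set{J}}\Bigl(\textstyle\sum_i d_i(\set{J})-M_{\set{J}}-(m-2)\Bigr)\ge U-(m-1)u .
\]
Here $\sum_{\set{J}}M_{\set{J}}=\rho$. For $\set{V}=\CC[t]_{<b}$, apply \cref{lem:window} with $M=\rho$, $a_0=a$, $\set{S}=\set{V}$, $k=b$ and $z=0$. It gives either $\eta\ge a$, i.e.\ $\sum_{\set{J}} d_i(\set{J})\ge b+a=\rho$, or $\ell\le a$. In addition, trivially $\sum_{\set{J}} d_i(\set{J})\ge \min(b,\,\cdot)$, and each part with $\mu$-mass $M_{\set{J}}$ has $d_i(\set{J})\ge\min(b,M_{\set{J}})$. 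For $\set{K}$, apply \cref{lem:window} with $k=b-u$ and $z=u$. This gives $\sum_{\set{J}} d_1(\set{J})\ge b-u+a$ unless $\ell\le a+u$.

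\emph{Case analysis.} If both window alternatives give full projection sums, the total is at least $(m-1)\rho-u-\rho\cdot 1-(m-2)\ell+\dots$; more precisely it is $m\rho-u-\rho-(m-2)\ell$ with $\ell\le\sigma$. This yields $(m-1)\rho-(m-2)\sigma-u\ge U-(m-1)u$. Otherwise $\ell\le a+u$, and we use only $\sum d_i\ge k$. The total is then at least $(m-1)b-u-\rho-(m-2)(a+u)+\dots$, which after substituting $\rho=a+b$ should be compared with $(m-1)b-a-(m-2)-(m-1)u$. This is the first entry of $U$. The mismatch in the count of $(m-2)$ terms, and the need for $\ell\ge1$ bookkeeping, is where I expect the real difficulty. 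Mixed cases, where one mode is in one alternative and another mode in the other, must also be tracked. I would first try treating $\ell=1$ separately, where the bound $\sum d_i-\rho-(m-2)$ is direct, and then do induction over the parts using per-part versions of \cref{lem:window}. This balancing of the $\ell$-dependent penalty against the window gain is the main obstacle.
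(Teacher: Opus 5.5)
There are genuine gaps, both in your reductions and in the main estimate.

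\emph{The reductions.} You cannot apply \cref{prop:partition} to $\set{K}$ directly inside $\set{A}=\CC[t]/(g_H)$. Suppose the polynomials in $\set{K}$ share a root with $g_H$. Then $f=\gcd(g_H,\set{K})$ is a nonunit, $\set{K}$ contains no unit, and every product $\set{K}\set{V}^{m-2}$ lies in the proper ideal $f\set{A}$. So \eqref{eq:omitted-products} fails for every $m$, not only for $m=3$. Dropping a local factor does not repair this, since a common root of multiplicity less than $\mu_\alpha$ only shrinks that factor. Nor can a codimension deficit in an omitted product be ``absorbed in the budget'': \cref{prop:partition} has no slack, and its hypothesis is used exactly, in \cref{lem:growth} and in the surjectivity of the moment projection.

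The missing idea is to divide out $f$. Put $g_f=g_H/f$ and $M=\rho-\deg f$. The functional $\lambda_f(q)=\lambda(fq)$ is Frobenius on $\set{B}=\CC[t]/(g_f)$. Let $\set{S}$ be the space of quotients $p/f$ with $p\in\set{K}$, and let $\set{V}_0$ be the image of $\CC[t]_{<b}$ in $\set{B}$. The restriction is then the pullback of $\lambda_f(sv_2\cdots v_m)$ on $\set{S}\times\set{V}_0\times\cdots\times\set{V}_0$. Now $\set{S}\subseteq\CC[t]_{<M-a}$ has dimension $k=b-u$ and contains a unit, and the window parameter is $z=u-\deg f$. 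The unrestricted inputs surject onto $\set{V}_0$ with linear sections, so CP rank is preserved exactly.

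For $m=3$ you must still show $\set{S}\set{V}_0=\set{B}$, and this can fail when $k$ is small. The paper first handles $k\le a+1$ separately: there the injective first contraction has rank $k\ge 2k-a-1\ge U-2u$. For $k>a+1$ it uses the growth of $\set{S}+t\set{S}+\cdots+t^j\set{S}$ to get $\dim(\set{S}\set{V}_0)\ge\min\{M,k+b-1\}=M$.

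\emph{The main estimate.} Your case analysis does not close, and the inequalities you propose cannot close it. In the branch $\ell\le a+u$ you have $\sum_{\set{J}}d_1(\set{J})\ge b-u$, $\sum_{\set{J}}d_i(\set{J})\ge b$ for $i\ge2$, and $\ell\le a+u$. These give only $(b-u)+(m-1)b-\rho-(m-2)(a+u)=(m-1)(b-a-u)$. That is short of the first entry of $U$ minus $(m-1)u$ by $(m-2)(a-1)$. When $\sigma=1$ and $a\ge2$ it is also below the second entry. For instance, take $m=3$, $n=5$, $\rho=7$ in \eqref{eq:antidiagonal-family} and $u=0$: your chain gives $6$, whereas $U=7$. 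Applying \cref{lem:window} to $\set{V}$ does not help.

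What you are missing is that $\dim\pi_{\set{J}}\set{V}_0=\min\{b,t_{\set{J}}\}$ exactly, where $t_{\set{J}}=\dim\set{B}_{\set{J}}$. So you should split by part length rather than by window alternatives.
\begin{enumerate}
\item Some part has $t_{\set{J}}\ge b$. Then $\dim\pi_{\set{J}}\set{S}\ge k-(M-t_{\set{J}})$, so this part alone costs at least $k+(m-1)b-M-(m-2)\ge U-(m-1)u$. Every other part has positive cost. A short part costs $\dim\pi_{\set{J}}\set{S}+(m-2)(t_{\set{J}}-1)\ge1$, because $\set{S}$ contains a unit. Another long part costs at least $1$ by \eqref{eq:first-threshold}.
\item All parts are short. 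Then the unrestricted modes contribute exactly $(m-1)t_{\set{J}}$ per part. With $\eta=\sum_{\set{J}}\dim\pi_{\set{J}}\set{S}-k$, the total cost is $k+\eta+(m-2)M-(m-2)\ell$. Now apply \cref{lem:window} to $\set{S}$ alone. If $\eta\ge a$, then $\ell\le\sigma$ and $\deg f\le u$ give the second entry of $U$ minus $(m-1)u$. Otherwise $\ell\le a+z$, and since $M-a-z=k$ the total is at least $(m-1)k+\eta\ge U-(m-1)u$.
\end{enumerate}
No induction over parts or mixed-case bookkeeping is needed.
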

\begin{proof}
\emph{Step 1: zero and small inputs.}
Put $k=b-u$.
If $k=0$, the bound is nonpositive since
$U-(m-1)b\le-a-(m-2)$.
When $m=3$ and $k\le a+1$, the first contraction
of the restricted tensor is injective, by conciseness
on $\set{V}$, so its flattening rank is $k$.
On the other hand
$U-2u\le2k-a-1\le k$, proving the assertion.
In what follows $k>0$, and for $m=3$ we also have
$k>a+1$.

\emph{Step 2: removing the common divisor.}
Let
$f=\gcd(g_H,\set{K}), d_f=\deg f, 
 g_f=g_H/f$ and  $M=\rho-d_f.$ 
The greatest common divisor can be computed from any
basis of $\set{K}$. Since every element is divisible
by $f$, we have $k\le b-d_f$ and $d_f\le u$.
Division by $f$ identifies $\set{K}$ with a
$k$-dimensional space
\[
 \set{S}\subseteq\CC[t]_{<b-d_f}
       =\CC[t]_{<M-a}\subseteq\set{B}=\CC[t]/(g_f).
\]
The last inclusion is injective because $b-d_f\le M$.
Set $z=b-d_f-k=u-d_f$.
Since $k>0$, the window parameter satisfies $a<M$. 
The space $\set{S}$ contains a unit modulo $g_f$.
At each root of $g_f$, the multiplicity in $f$ is the
minimum of the multiplicities in $g_H$ and in the
polynomials of $\set{K}$.
That minimum is strictly less than the multiplicity
in $g_H$, so some divided element is nonzero at the
root. Avoiding these finitely many vanishing hyperplanes gives
a unit in $\set{S}$.

Define $\lambda_f(q)=\lambda(fq)$ on $\set{B}$.
This is well defined and Frobenius even if $f$ and
$g_f$ have common roots. Indeed, the annihilator of
$f$ in $\CC[t]/(g_H)$ is the ideal generated by $g_f$.
If $\lambda(fqv)=0$ for all $v$ modulo $g_f$,
it also vanishes for all $v$ modulo $g_H$.
Nondegeneracy gives $fq=0$ modulo $g_H$, hence
$q=0$ modulo $g_f$. 
Let $\set{V}_0$ be the image of $\CC[t]_{<b}$ in
$\set{B}$.
The original restriction is the pullback of
$\lambda_f(sv_2\cdots v_m)$ on
$\set{S}\times\set{V}_0\times\cdots\times\set{V}_0$.
The last $m-1$ input maps are surjective and have
linear sections. Their pullback therefore preserves
CP rank exactly.

\emph{Step 3: verifying all omitted products.}
For $m\ge4$, \cref{lem:moment} gives
$\set{V}_0^{m-2}=\set{B}$ after passing to the
quotient.
Since $1\in\set{V}_0$ and $\set{S}$ contains a unit,
every product omitting one of the $m$ inputs spans
$\set{B}$. 
For $m=3$ one has $\set{V}_0^2=\set{B}$.
If $M\le b$, then we have $\set{V}_0=\set{B}$ and the
unit in $\set{S}$ gives $\set{S}\set{V}_0=\set{B}$.
If $M>b$, consider
\[
 \set{S},\  \set{S}+t\set{S},\ \ldots,\ 
 \set{S}+t\set{S}+\cdots+t^j\set{S}.
\]
A stalled space is $t$-invariant and contains a
unit. It therefore contains that unit times every
element of $\CC[t]/(g_f)$, so equals $\set{B}$.
The dimensions increase by at least one until that
happens. Consequently
\[
 \dim(\set{S}\set{V}_0)\ge\min\{M,k+b-1\}=M,
\]
using $k>a+1$ and $M=b+a-d_f$.
Thus \cref{prop:partition} applies in every case
still under consideration.

\emph{Step 4: partitions containing a long part.}
Fix any partition of the whole local factors of
$\set{B}$.
For a part $\set{J}$, let $t_{\set{J}}=\dim\set{B}_{\set{J}}$ and
$d_S(\set{J})=\dim\pi_{\set{J}}\set{S}$.
Polynomial division gives
$\dim\pi_{\set{J}}\set{V}_0=\min\{b,t_{\set{J}}\}$.
Its cost in \eqref{eq:partition} is
\begin{equation}\label{eq:part-cost}
 d_S(\set{J})+(m-1)\min\{b,t_{\set{J}}\}-t_{\set{J}}-(m-2).
\end{equation} 
If $t_{\set{J}}\ge b$, then we have $d_S(\set{J})\ge k-(M-t_{\set{J}})$,
so this part alone has cost at least
\begin{align*}
 k+(m-1)b-M-(m-2)
 =(m-1)b-a-(m-2)-u+d_f
 \ge U-(m-1)u.
\end{align*}
All other parts have nonnegative cost.
A short part has cost
$d_S(\set{J})+(m-2)(t_{\set{J}}-1)\ge1$, because
$\set{S}$ contains a unit.
For another long part, use $d_S(\set{J})\ge1$
and $t_{\set{J}}\le\rho$ to bound its cost below by
\[
 1+(m-1)b-\rho-(m-2)
 =1+\bigl((m-1)b-a-(m-2)\bigr)-b\ge1,
\]
where the last inequality is \eqref{eq:first-threshold}.

\emph{Step 5: partitions with only short parts.}
Suppose all $t_{\set{J}}<b$.
Let $\ell$ be the number of parts, let $\sigma_f$
be the number of distinct roots of $g_f$, and set
$\eta=\sum_{\set{J}}d_S(\set{J})-k\ge0$.
Summing \eqref{eq:part-cost} gives
\[
 k+(m-2)M+\eta-(m-2)\ell.
\]
If $\eta\ge a$, use $\ell\le\sigma_f\le\sigma$
to obtain
\begin{align*}
 &k+(m-2)M+\eta-(m-2)\ell
 \ge k+(m-2)M+a-(m-2)\sigma_f\\
 &=(m-1)\rho-(m-2)\sigma-(m-1)u +(m-2)(u-d_f)+(m-2)(\sigma-\sigma_f)
 \ge U-(m-1)u.
\end{align*}
If $\eta<a$, \cref{lem:window} gives
$\ell\le a+z$. The cost is then at least
\[
 k+(m-2)(M-a-z)+\eta=(m-1)k+\eta
 \ge U-(m-1)u.
\]
Here $M-a-z=k$ and
$U\le(m-1)b-a-(m-2)\le(m-1)b$.

The cost of every partition has been bounded.
Taking the minimum in \cref{prop:partition} proves
\eqref{eq:restriction}.
Symmetry gives the same bound in every mode.
\end{proof}

\subsection{Proofs of the exact-rank results}

We can now complete the proofs of
\cref{thm:main,cor:symmetric,cor:pair}.

\begin{proof}[Proof of \cref{thm:main}]
A zero summand is immediate.
Otherwise pass to actual supports of the Hankel tensor.
In the moment presentation its tensor factor is
$\set{V}^*$. For any $\set{C}\subseteq\set{V}^*$,
quotienting that factor restricts its input to
$\set{K}=\set{C}^{\perp}\subseteq\set{V}$,
of codimension $\dim\set{C}$.
By \cref{prop:restriction}, we have
\[
 r(\tens{H}_{i,\set{C}})
 \ge r(\tens{H})-(m-1)\dim\set{C}
\]
in every mode. Thus \cref{prop:criterion} applies
in any $m-1$ modes, proving CP rank additivity.
Passage to supports preserves the ranks of the
individual tensors and their direct sum.
\end{proof}

\begin{proof}[Proofs of \cref{cor:symmetric,cor:pair}]
By \cref{prop:upper}, we have $\Delta_c(\tens{H})=0$, while
\cref{thm:main} gives $\Delta_r(\tens{H},\tens{S})=0$.
Thus the right side of \eqref{eq:gap-identity} is $\Delta_c(\tens{S})$.
If $\Delta_c(\tens{S})=0$, both gaps on the left vanish by
nonnegativity, proving \cref{cor:symmetric}.
For a Hankel partner, \cref{prop:upper} supplies this rank
equality and yields \cref{cor:pair}.
\end{proof}

\section{Border rank of two Hankel summands}\label{sec:border}

This section combines the binary border upper bound
\eqref{eq:border-upper} with compatible matrix lower bounds. 
The single-tensor certificates are known: the even-order one is
in \cite[Lemma~4.1 and Theorem~4.2]{nie2019hankel}, and the fixed
odd-order identity below is in the proof of
\cite[Theorem~6.1]{holden2026exact}. 
% We give the latter in our
% notation and then combine the certificates for direct sums.

\subsection{Hankel matrix certificates}
For a third-order tensor $\tens{T}\in\CC^3\otimes\CC^u\otimes\CC^v$
with slices $\matr{M}_0,\matr{M}_1,\matr{M}_2$, define
\begin{equation}\label{eq:koszul-matrix}
\matr{K}(\tens{T})\eqdef
\begin{pmatrix}
0&\matr{M}_2&-\matr{M}_1\\
-\matr{M}_2&0&\matr{M}_0\\
\matr{M}_1&-\matr{M}_0&0
\end{pmatrix}.
\end{equation}
The standard Koszul flattening bound gives
\begin{equation}\label{eq:koszul-border-bound}
\rank\matr{K}(\tens{T})\le2\br(\tens{T})
\end{equation}
by \cite[Theorem~2.1]{landsberg2015koszul}, with first-factor
dimension three and exterior degree one. 
% Grouping modes and
% applying linear maps to the grouped factors cannot increase border
% rank: each original simple tensor maps to a simple tensor or zero,
% and the maps are continuous.

\begin{proposition}[Hankel matrix certificates]\label{prop:border-certificate}
Let $\tens{H}\in\set{H}_m(n)$ with $m\ge3$.
If $m=2q$, its $q\mid q$ flattening has rank $\rho(\tens{H})$.
If $m=2q+1$, after grouping the modes as $1,q,q$ there are
linear maps on the three factors with image $\tens{H}'$ in a
tensor space whose first factor is $\CC^3$, such that
\begin{equation}\label{eq:koszul-exact}
\rank\matr{K}(\tens{H}')=2\rho(\tens{H}).
\end{equation}
\end{proposition}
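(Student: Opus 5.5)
\emph{Even order.} For $m=2q$ the plan is to compute the $q\mid q$ flattening directly. Its entry at the row multi-index $(i_1,\ldots,i_q)$ and column multi-index $(j_1,\ldots,j_q)$ is $h_{i_1+\cdots+i_q+j_1+\cdots+j_q}$. So it depends only on the row sum $i\in\{0,\ldots,q(n-1)\}$ and the column sum $j\in\{0,\ldots,q(n-1)\}$. Rows with equal index sum coincide, and so do such columns. Deleting the duplicates leaves the Hankel matrix $(h_{i+j})_{0\le i,j\le q(n-1)}$. Since $D=2q(n-1)$, we have $\lfloor D/2\rfloor=\lceil D/2\rceil=q(n-1)$, so this matrix is exactly the middle catalecticant in \eqref{eq:rho-middle}. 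Hence the flattening has rank $\rho(\tens{H})$. The cases $n=1$ and $\tens{H}=\tens{0}$ are immediate.

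\emph{Odd order, choice of maps.} For $m=2q+1$, group the modes as $1,q,q$. As above, collapse the repeated rows and columns of the two grouped factors; this is a linear map on each grouped factor. The result is the third-order tensor with entries $h_{e+i+j}$, where $0\le e\le n-1$ and $0\le i,j\le q(n-1)$. On the first factor $\CC^n$, take the linear map to $\CC^3$ that keeps the coordinates $e=0$ and $e=n-1$ and sends the rest to zero. The resulting $\tens{H}'$ then has slices
\[
\matr{M}_0=(h_{i+j}),\qquad \matr{M}_1=(h_{n-1+i+j}),\qquad \matr{M}_2=0 .
\]
I deliberately set $\matr{M}_2=0$. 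This avoids needing a third independent multiplier such as $t^2$, which would not exist when $n=2$, and it makes the rank computation block-triangular.

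\emph{Odd order, rank computation.} With $\matr{M}_2=0$, the matrix \eqref{eq:koszul-matrix} has nonzero blocks only in its third block column, namely $(-\matr{M}_1;\matr{M}_0)$, and in its third block row, namely $(\matr{M}_1,-\matr{M}_0)$. These two pieces occupy disjoint row and column blocks, so
\[
\rank\matr{K}(\tens{H}')=\rank\begin{pmatrix}\matr{M}_1\\ \matr{M}_0\end{pmatrix}
+\rank\begin{pmatrix}\matr{M}_1&\matr{M}_0\end{pmatrix}.
\]
Consider the concatenation $(\matr{M}_0\ \matr{M}_1)$. Its columns are indexed by pairs $(e,j)$ with $e\in\{0,n-1\}$ and the block-$e$ column depending on $e+j$. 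Identifying duplicate columns, its column sums range over all of $\{0,\ldots,(q+1)(n-1)\}$, because the two windows $\{0,\ldots,q(n-1)\}$ and $\{n-1,\ldots,(q+1)(n-1)\}$ overlap or abut. Hence
\[
\rank\begin{pmatrix}\matr{M}_0&\matr{M}_1\end{pmatrix}=\rank\,(h_{i+j})_{0\le i\le q(n-1),\ 0\le j\le (q+1)(n-1)} .
\]
Since $q(n-1)+(q+1)(n-1)=D$, this is the catalecticant with row degree $q(n-1)$ and column degree $(q+1)(n-1)$. In the two cases of the parity of $D$ it is either the middle catalecticant or its transpose-equivalent, since the rank is the same whenever the row and column degrees bracket $D/2$. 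In terms of \cref{lem:moment}, both degree windows surject onto $\set{A}$ because $\rho-1\le\lfloor D/2\rfloor$, and the Frobenius pairing is nondegenerate. So its rank is $\rho(\tens{H})$. The stacked matrix $(\matr{M}_1;\matr{M}_0)$ is the transpose of the same construction in the other factor and also has rank $\rho$. This gives \eqref{eq:koszul-exact}.

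\emph{Main obstacle.} The step I expect to need most care is precisely this identification of the enlarged Hankel matrices with the middle catalecticant. One must check that the index windows $\{0,\ldots,q(n-1)\}$ and $\{0,\ldots,(q+1)(n-1)\}$ both reach degree at least $\rho-1$. This is needed so that the pairing $(u,v)\mapsto\lambda(uv)$ factors through surjections onto $\set{A}=\CC[t]/(g_H)$, exactly as in the proof of \cref{lem:moment}. One should also handle the degenerate cases $n=1$ and $\tens{H}=\tens{0}$ separately; there the claim $\rank\matr{K}=2\rho$ follows by inspection. As a consistency check, the upper bound $\rank\matr{K}(\tens{H}')\le2\br(\tens{H}')\le2\rho$ from \eqref{eq:koszul-border-bound} and \eqref{eq:border-upper} agrees with the computed value.
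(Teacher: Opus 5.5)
Your even-order argument is correct and matches the paper's. The odd-order argument has a genuine gap: with $\matr{M}_2=0$ and only the shifts $0$ and $n-1$, the certificate cannot reach $2\rho(\tens{H})$ in general.

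\emph{Where it fails.} The faulty step is the claim that $(h_{i+j})_{0\le i\le q(n-1),\,0\le j\le(q+1)(n-1)}$ has rank $\rho$ because its degrees ``bracket $D/2$''. Bracketing is not enough. The catalecticant pairing on $\CC[t]_{\le j}\times\CC[t]_{\le D-j}$ has rank $\min\{\rho,j+1,D-j+1\}$, so both windows must have degree at least $\rho-1$. Your row window has degree $q(n-1)$. But $\rho-1$ can be as large as $\lfloor D/2\rfloor=q(n-1)+\lfloor(n-1)/2\rfloor$, and the inequality $\rho-1\le\lfloor D/2\rfloor$ does not make $\CC[t]_{\le q(n-1)}$ surject onto $\set{A}$. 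This is exactly the check you flagged as the main obstacle. It fails whenever $n\ge3$ and $\rho>q(n-1)+1$, for instance for generic Hankel tensors.

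\emph{Concrete counterexamples.} Take $m=n=3$ and $\tens{H}^{(4)}$ from \eqref{eq:antidiagonal-family}, so $\rho=4$. Your $\matr{M}_0,\matr{M}_1$ are $3\times3$, so both pieces of your block formula have rank at most $3$, and $\rank\matr{K}(\tens{H}')\le6<8$. In the case $m=5$, $n=3$, $\rho=6$ of \cref{ex:compressed}, you get at most $10<12$.

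\emph{Structural reason.} Your own block formula shows that a zero slice turns $\rank\matr{K}$ into a sum of two ordinary flattening ranks, each at most $\min\{\rho,q(n-1)+1\}$. Such a certificate can therefore never beat the $q\mid q+1$ flattening bound, and beating that bound is the whole purpose of the Koszul certificate.

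\emph{What is missing.} You need a genuinely intermediate third slice. The paper uses shifts $0$, $c=\lfloor(n-1)/2\rfloor$ and $n-1$, and proves the factorization $\matr{K}(\tens{H}')=\matr{E}\bigl(\begin{smallmatrix}0&\matr{C}\\-\matr{C}^{\mathsf T}&0\end{smallmatrix}\bigr)\matr{E}^{\mathsf T}$. Here $\matr{C}$ is the true middle catalecticant, with windows of lengths $q(n-1)+c+1$ and $q(n-1)+(n-1-c)+1$, and $\matr{E}$ is injective. This gives rank $2\rank\matr{C}=2\rho$.

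Your concern about $n=2$ is unnecessary: repeated coordinate selections (there $c=0$) are legitimate linear maps. Your two-slice construction works exactly when $\rho\le q(n-1)+1$. That holds automatically for $n\le2$, but not for $n\ge3$.
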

\begin{proof}
For $m=2q$, selecting one multi-index of each possible index sum
in either group gives the middle binary catalecticant. All other
rows and columns repeat these, so the ranks agree. 
For $m=2q+1$, define the local parameters by
\[
\ell=n-1,\ \  N=q\ell+1,\ \ 
c=\lfloor\ell/2\rfloor,\ \  \delta_+=\ell-c.
\] 
In either group of $q$ modes select one coordinate of each index
sum $0,\ldots,N-1$; in the singleton factor apply
$\vect{z}\mapsto(z_0,z_c,z_\ell)^{\mathsf T}$.
Set $s_0=0$, let $s_1=c$, and let $s_2=\ell$.
The resulting tensor $\tens{H}'$ has slices
$\matr{M}_\nu=(h_{i+k+s_\nu})_{0\le i,k<N}$ for $\nu=0,1,2$,
so 
$$\matr{K}(\tens{H}')=\matr{K}(\matr{M}_0,\matr{M}_1,\matr{M}_2).$$
Repeated coordinate selections for $n=1,2$ are valid linear maps.

Write $\matr{C}=(h_{i+j})_{0\le i<N+c,\,0\le j<N+\delta_+}$
for the middle binary catalecticant. Define
\begin{equation}\label{eq:fixed-map}
\matr{E}:\CC^{N+c}\oplus\CC^{N+\delta_+}\to\CC^{3N},\ \ (\vect{x},\vect{y})\mapsto\bigl((x_{i+c})_{i=0}^{N-1},\,
(-x_i+y_{i+\delta_+})_{i=0}^{N-1},\,(-y_i)_{i=0}^{N-1}\bigr)^{\mathsf T}.
\end{equation}
Block multiplication, using $c+\delta_+=\ell$, gives
\begin{equation}\label{eq:fixed-identity}
\matr{K}(\tens{H}')=
\matr{E}\begin{pmatrix}0&\matr{C}\\-\matr{C}^{\mathsf T}&0\end{pmatrix}
\matr{E}^{\mathsf T}.
\end{equation}
Moreover, the map $\matr{E}$ is injective. Indeed, the equality $\matr{E}(\vect{x},\vect{y})=0$
first gives $x_i=0$ for $i\ge c$ and $y_i=0$ for $i<N$.
For $i<c$, the middle equations give $x_i=y_{i+\delta_+}=0$,
because $i+\delta_+<\ell<N$. They then force all remaining $y_i$ to
vanish, since $\delta_+\le N$.
An injective congruence preserves matrix rank, so
\eqref{eq:fixed-identity} has rank $2\rank\matr{C}=2\rho(\tens{H})$.
The identity holds for every moment vector, including zero.
\end{proof}

The single-tensor equality for all complex Hankel tensors is already
\cite[Theorem~6.1]{holden2026exact}. The fixed identity above also
supplies compatible sharp certificates for the following direct-sum
argument, including repeated roots and unequal dimensions.

\subsection{Combining the matrix lower bounds}
The next elementary construction is stated for finitely many
summands so that the later finite-sum consequence requires no
new matrix argument.

\begin{lemma}[A common first factor]\label{lem:joint-koszul}
Let $\tens{T}_1,\ldots,\tens{T}_k$ have the same order and fix
a grouping of their modes into three nonempty groups.
Suppose factorwise linear maps send each grouped tensor to
$\tens{T}'_\nu\in\CC^3\otimes\set{U}_\nu\otimes\set{W}_\nu$.
Then
\begin{equation}\label{eq:joint-koszul}
\br\left(\bigoplus_{\nu=1}^k\tens{T}_\nu\right)
\ge\frac12\sum_{\nu=1}^k\rank\matr{K}(\tens{T}'_\nu).
\end{equation}
\end{lemma}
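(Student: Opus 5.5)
The plan is to assemble the given factorwise maps into a single factorwise map on the direct sum that lands in a tensor space with first factor $\CC^3$, and then apply the Koszul bound \eqref{eq:koszul-border-bound} once.

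\textbf{Step 1: one grouped map for the whole sum.} Group the modes of each $\tens{T}_\nu$ into the three fixed groups. After grouping, the direct sum $\bigoplus_\nu\tens{T}_\nu$ is a tensor in $\bigl(\bigotimes_{i\in G_1}\bigoplus_\nu\set{A}_{\nu,i}\bigr)\otimes\cdots$. Each grouped factor contains $\bigoplus_\nu\bigotimes_{i\in G}\set{A}_{\nu,i}$ as a direct summand, and that summand has a linear retraction. Because all mixed blocks vanish, the grouped direct sum lies in the subspace spanned by these diagonal pieces; it is the grouped direct sum of the grouped tensors.

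Grouping and applying linear maps factorwise cannot increase border rank. Each simple tensor is sent to a simple tensor or to zero, the maps are continuous, and grouping sends rank-one tensors to rank-one tensors.

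On the first grouped factor, take the sum over $\nu$ of the given maps into $\CC^3$, composed with the projections onto the $\nu$th diagonal pieces, so that all the $\nu$ share one common target $\CC^3$. On the second and third factors, take the direct sums of the given maps, landing in $\bigoplus_\nu\set{U}_\nu$ and $\bigoplus_\nu\set{W}_\nu$. The image of the direct sum is then
\[
\tens{T}'=\sum_\nu\tens{T}'_\nu\in\CC^3\otimes\Bigl(\bigoplus_\nu\set{U}_\nu\Bigr)\otimes\Bigl(\bigoplus_\nu\set{W}_\nu\Bigr),
\]
with $\br(\tens{T}')\le\br\bigl(\bigoplus_\nu\tens{T}_\nu\bigr)$.

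\textbf{Step 2: slices are block diagonal.} Each slice $\matr{M}_j$ of $\tens{T}'$ ($j=0,1,2$) is the block-diagonal matrix $\mathrm{diag}(\matr{M}_{j,\nu})_\nu$, where $\matr{M}_{j,\nu}$ are the slices of $\tens{T}'_\nu$. Permute the rows and columns of $\matr{K}(\tens{T}')$ in \eqref{eq:koszul-matrix}, grouping by $\nu$ within each of the three row and column blocks. This turns $\matr{K}(\tens{T}')$ into $\mathrm{diag}(\matr{K}(\tens{T}'_\nu))_\nu$, so
\[
\rank\matr{K}(\tens{T}')=\sum_\nu\rank\matr{K}(\tens{T}'_\nu).
\]

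\textbf{Step 3: conclude.} Apply \eqref{eq:koszul-border-bound} to $\tens{T}'$, which gives $\rank\matr{K}(\tens{T}')\le2\br(\tens{T}')$. Combining this with Steps 1 and 2 yields \eqref{eq:joint-koszul}.

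The only point needing care is Step 1: one must check that the combined first-factor map is well defined on the whole grouped factor, including mixed summands. Extending it by zero on the complement of the diagonal pieces handles this, since the direct sum has no components there.
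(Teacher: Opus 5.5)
Your proposal is correct and follows the paper's proof essentially step for step. You annihilate mixed-source blocks on the grouped factors, sum the prescribed maps into one common $\CC^3$ on the first factor while keeping the $\set{U}_\nu$ and $\set{W}_\nu$ separate, note that the resulting slices are block diagonal so that $\matr{K}$ becomes $\bigoplus_\nu\matr{K}(\tens{T}'_\nu)$ after a permutation, and then apply \eqref{eq:koszul-border-bound} once, exactly as the paper does.
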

\begin{proof}
Group the modes of the direct sum. On the first grouped factor,
annihilate mixed-source blocks and sum the prescribed maps on
the pure blocks into one common $\CC^3$.
On the other two factors, again annihilate mixed-source blocks
and retain the images in separate spaces $\set{U}_\nu$ and
$\set{W}_\nu$. These are linear maps on the grouped factors.
If the slices of $\tens{T}'_\nu$ are $\matr{M}_{\nu,j}$ for
$j=0,1,2$, the resulting tensor $\widehat{\tens{T}}$ has slices
$\bigoplus_\nu\matr{M}_{\nu,j}$.
After row and column permutations, the matrix $\matr{K}(\widehat{\tens{T}})$ is
$\bigoplus_\nu\matr{K}(\tens{T}'_\nu)$.
Thus its matrix rank is the sum of the individual matrix ranks,
whereas $\br(\widehat{\tens{T}})$ does not exceed the border rank of the
original direct sum. Then we apply \eqref{eq:koszul-border-bound}.
\end{proof}

The common first factor keeps the divisor in
\eqref{eq:koszul-border-bound} equal to two. In particular, the
argument applies to arbitrary limiting CP decompositions;
they need not respect the summand blocks.

\begin{proof}[Proof of \cref{thm:border}]
The upper bound \eqref{eq:border-upper} and subadditivity give
\begin{equation}\label{eq:joint-upper}
\br(\tens{H}_1\oplus\tens{H}_2)\le\bs(\tens{H}_1\oplus\tens{H}_2)
\le\rho(\tens{H}_1)+\rho(\tens{H}_2).
\end{equation}
For $m=2q$, the $q\mid q$ flattening has the two individual
flattenings as diagonal blocks and has zero mixed blocks.
By \cref{prop:border-certificate}, its rank is
$\rho(\tens{H}_1)+\rho(\tens{H}_2)$, giving the reverse inequality.
For $m=2q+1$, apply \cref{lem:joint-koszul} to the $1,q,q$
compressions supplied by \cref{prop:border-certificate}.
Their Koszul ranks are $2\rho(\tens{H}_1)$ and $2\rho(\tens{H}_2)$,
so the same lower bound follows. Zero summands contribute zero
throughout. This proves both equalities in \eqref{eq:border-finite}.
\end{proof}

\begin{proof}[Proofs of \cref{cor:hankel-border-ranks,cor:border-symmetric}]
Take $\tens{H}_1=\tens{H}$ and $\tens{H}_2=0$ in \cref{thm:border}. Invariance under adjoining
zero coordinates, as noted in \cref{sec:criterion}, gives
\cref{cor:hankel-border-ranks} and hence
$\underline{\Delta}_c(\tens{H})=0$.
Under the hypotheses of \cref{cor:border-symmetric}, the right
side of \eqref{eq:border-gap-identity} therefore vanishes.
Nonnegativity forces both gaps on the left to vanish, and the
single-tensor formula gives the stated common value.
When $\tens{S}$ is Hankel, its hypotheses follow from
\cref{thm:border,cor:hankel-border-ranks}.
\end{proof}

\section{Koszul bounds for third-order partners}\label{sec:cubic-bounds}

We first apply the common-factor construction to partners of small
border rank. We then obtain a five-slice Hankel bound for the
symmetric partners considered in \cref{sec:cubic-symmetric}.
Throughout these two sections, we fix $\tens{H}\in\set{H}_3(n)$
and write $\rho=\rho(\tens{H})$.

\subsection{The common matrix bound and low border rank}
For a third-order tensor $\tens{T}$, let $\tens{T}'$ range over
the tensors obtained by permuting its factors and mapping the
first factor linearly to $\CC^3$. Define its three-slice Koszul
bound by 
\[
\kappa(\tens{T})\eqdef
\max_{\tens{T}'}\lceil\rank\matr{K}(\tens{T}')/2\rceil.
\]
The maximum exists because the possible matrix ranks form a finite
set of integers. By \eqref{eq:koszul-border-bound}, we have $\kappa(\tens{T})\le\br(\tens{T})$.
Moreover, \cref{prop:border-certificate,lem:joint-koszul} give
$$\br(\tens{H}\oplus\tens{T})\ge\rho+\kappa(\tens{T}).$$
Thus a sharp three-slice bound for the partner proves ordinary
border-rank additivity. If the partner is a symmetric tensor $\tens{S}$ and
$\kappa(\tens{S})=\bs(\tens{S})$, the same bound and symmetric
subadditivity prove equality of both border ranks of the direct sum.

\begin{proposition}[Partners of border rank at most three]
\label{prop:small-border-partner}
Every complex third-order tensor $\tens{T}$ with
$\br(\tens{T})\le3$ satisfies $\kappa(\tens{T})=\br(\tens{T})$.
\end{proposition}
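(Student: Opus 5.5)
Since $\kappa(\tens{T})\le\br(\tens{T})$ always holds by \eqref{eq:koszul-border-bound}, it suffices to exhibit, for each tensor with $\br(\tens{T})=r\le3$, a permutation of factors and a linear map of the first factor to $\CC^3$ whose Koszul matrix has rank at least $2r-1$; the ceiling then gives $\kappa(\tens{T})\ge r$. We first replace $\tens{T}$ by its concise restriction to its supports, which preserves $\br$ and can only help $\kappa$ (any map on the support extends by a retraction). Let $(d_1,d_2,d_3)$ be the support dimensions. The plan is to split by $r$ and use the classification of tensors of border rank at most three.

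For $r=0$ there is nothing to prove. For $r=1$, a nonzero simple tensor maps to $e_0\otimes\vect u\otimes\vect w$. Then $\matr{M}_0\ne0$, so $\rank\matr K\ge2$ and $\kappa=1$. For $r=2$, a concise tensor is either $\CC^2\otimes\CC^2\otimes\CC^2$ with border rank two, or degenerate with some $d_i=1$. In the degenerate case $\tens{T}$ is a matrix of rank two, and $\matr K$ with $\matr{M}_0$ of rank two already has rank four. In the $2\times2\times2$ case, $\matr{M}_0,\matr{M}_1$ are two $2\times2$ slices with $\matr{M}_2=0$. The matrix $\matr K$ then contains blocks $\matr{M}_0,\matr{M}_1$ in a triangular pattern, so its rank is at least $\rank\matr M_0+\rank\begin{pmatrix}\matr M_0&\matr M_1\end{pmatrix}\ge 3$. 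Since any tensor of border rank two has some slice of rank two after a change of basis, this gives $\kappa\ge2$.

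For $r=3$, we argue by the maximal flattening rank. If some flattening has rank three, put that mode last and choose three slices of the first factor whose span still gives this rank. More directly, with $\matr M_0$ a generic slice of rank $s$, $\rank\matr K\ge 2\rank\matr M_0$ as soon as the blocks combine well, so a slice of rank three gives $\rank\matr K\ge6$. If every slice has rank at most two but $\br=3$, the tensor is in one of the finitely many degenerate normal forms of border rank three with concise dimensions at most $(3,3,3)$, for instance the $W$-type tensor $a_0b_0c_1+a_0b_1c_0+a_1b_0c_0$ in $(\CC^2)^{\otimes3}$ variants and $\CC^3\otimes\CC^3\otimes\CC^3$ tensors whose slice span is a space of bounded rank two. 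We use the Buczyńska--Buczyński/Landsberg classification of border rank three tensors, or the normal forms of concise tensors in $\CC^3\otimes\CC^3\otimes\CC^3$ and $\CC^2\otimes\CC^3\otimes\CC^3$, and verify case by case that some choice of factor order and three slices gives $\rank\matr K\ge5$. For the $2\times2\times2$ $W$-state, a direct computation with slices $\begin{pmatrix}0&1\\1&0\end{pmatrix},\begin{pmatrix}1&0\\0&0\end{pmatrix},0$ gives rank five. Spaces of bounded rank two are compression spaces or $\Lambda^2\CC^3$. In the latter, the skew-symmetric $\CC^3\otimes\CC^3\otimes\CC^3$ tensor, the three slices already give rank at least five. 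For compression spaces we permute factors so that the partner mode has full slice rank.

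The main obstacle is the $r=3$ case without a rank-three slice. It requires an exhaustive and reliable list of concise border-rank-three normal forms with $d_i\le3$ (note that concise border rank three forces all $d_i\le3$), together with a rank computation of $\matr K$ for each form under a suitable factor permutation. We would organize this by the dimension triple, $(1,3,3)$, $(2,2,2)$, $(2,2,3)$, $(2,3,3)$, $(3,3,3)$, and handle each using the known orbit lists from the literature.
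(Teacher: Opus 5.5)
\textbf{There is a genuine gap: the case $r=3$ is not actually proved.}

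Several parts of your plan are correct. The reduction to exhibiting $\rank\matr{K}\ge 2r-1$ is right, and so are the cases $r\le2$. The bound $\rank\matr{K}\ge2\rank\matr{M}_0$ holds unconditionally, because block rows and columns two and three contain $\left(\begin{smallmatrix}0&\matr{M}_0\\-\matr{M}_0&0\end{smallmatrix}\right)$. So a rank-three slice in any direction settles $r=3$. This covers $(1,3,3)$ and, as one checks, each of the four concise $3\times3\times3$ normal forms of \cite{buczynski2014third}.

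The remaining cases at $r=3$ are left as an unexecuted ``case by case'' check, and the heuristics you offer for them fail.
\begin{enumerate}
\item In the format $(2,2,3)$, every slice in every direction has rank at most two.
\item In the format $(2,3,3)$, the slices along either three-dimensional factor are $2\times3$ matrices. So for a singular pencil, no slice in any direction has rank three, and permuting factors until some slice has full rank is impossible. An example is the concise pencil $\begin{pmatrix}s&t&0\\0&0&s\\0&0&t\end{pmatrix}$. It has border rank three, since every $2\times3\times3$ tensor has border rank at most three and $d_3=3$.
\end{enumerate}

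Some supporting claims are also wrong.
\begin{itemize}
\item The $W$-tensor has border rank two, not three. With your slices, $\rank\matr{K}=2+2=4$, not $5$; a rank of $5$ would violate \eqref{eq:koszul-border-bound}.
\item The skew tensor whose slice space is $\bigwedge\nolimits^2\CC^3$ has $\rank\matr{K}=9$, hence border rank five, so it does not belong in a border-rank-three list.
\item The format $(2,2,2)$ cannot occur at border rank three.
\end{itemize}

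\textbf{The missing idea is a two-slice rank identity.} If $\matr{M}_2=0$, the first two block columns of \eqref{eq:koszul-matrix} map into the third block row, and the last block column maps into the first two block rows. Hence
\[
\rank\matr{K}(\matr{M}_0,\matr{M}_1,0)=\rank\binom{\matr{M}_0}{\matr{M}_1}+\rank(\matr{M}_0\ \matr{M}_1).
\]
If a factor of dimension at most two is placed first, this equals the sum of the other two support dimensions. That immediately gives ranks $2$ and $4$ for $r=1,2$.

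For $r=3$, every $2\times2\times2$ tensor has border rank at most two, so $\br(\tens{T})=3$ forces $d_3=3$. Conciseness then leaves only the formats $(1,3,3)$, $(2,2,3)$, $(2,3,3)$ when $d_1\le2$. The identity gives Koszul ranks $6,5,6$ for these, with no orbit classification at all, and this is how the paper argues.

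Only the concise $3\times3\times3$ case needs the Buczy\'nski--Landsberg normal forms. There, direct substitution, or your rank-three-slice observation, gives Koszul rank six.
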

\begin{proof}
For two slices $\matr{M}_0,\matr{M}_1$, with the third slice zero,
\eqref{eq:koszul-matrix} gives
\[
\rank\matr{K}(\matr{M}_0,\matr{M}_1,0)
=\rank\binom{\matr{M}_0}{\matr{M}_1}
+\rank(\matr{M}_0\ \matr{M}_1).
\]
Indeed, the first two block columns and the last block column
have images in disjoint block rows. The two ranks on the right
are the support dimensions of the remaining factors.

The zero tensor is immediate. Otherwise pass to supports and
write their dimensions in increasing order as $d_1\le d_2\le d_3$.
Each $d_i\le\br(\tens{T})$. For border rank one all three
dimensions are one, and the two-slice formula gives rank two.
For border rank two, at least two dimensions are two, so the same
formula gives rank four after embedding the first factor in $\CC^3$.
Suppose now that $\br(\tens{T})=3$.
Then $d_3=3$: every complex $2\times2\times2$ tensor has border
rank at most two, since a dense set of matrix pencils has an
invertible slice and two distinct generalized eigenvalues.
If $d_1\le2$, the possible formats are
$\{(1,3,3),(2,2,3),(2,3,3)\}$. The two-slice formula gives ranks six, five,
and six, respectively, whose half-ranks all round up to three. 
It remains to treat the concise $3\times3\times3$ case.
Write $[ijk]=\vect{a}_i\otimes\vect{b}_j\otimes\vect{c}_k$ in
bases of the three supports. By
\cite[Theorem~1.2]{buczynski2014third}, the four normal forms,
up to permutations of the factors, are
\begin{align*}
&[111]+[222]+[333],\ \ 
[211]+[121]+[112]+[333],\\ 
&[122]+[212]+[221]+[113]+[131]+[311],\ \ [212]+[221]+[113]+[131]+[311].
\end{align*}
Substitution in \eqref{eq:koszul-matrix} gives rank six in each
case. Hence $\kappa(\tens{T})=3$, completing the proof.
\end{proof}

\subsection{A five-slice Hankel bound}
We use the higher Koszul map only for the remaining symmetric
partners. For $\tens{T}\in\set{A}\otimes\set{B}\otimes\set{C}$,
write $\matr{K}_{\set{A}}^{p}(\tens{T})$ for the linear map from
$\set{B}^*\otimes\bigwedge\nolimits^p\set{A}$ to
$\set{C}\otimes\bigwedge\nolimits^{p+1}\set{A}$ which sends
$\beta\otimes\vect{\omega}$ to $\beta(\vect{v})\vect{w}\otimes(\vect{u}\wedge\vect{\omega})$
when $\tens{T}=\vect{u}\otimes \vect{v}\otimes \vect{w}$, and extend linearly.
If $\dim\set{A}=q$, its rank on a nonzero simple tensor is
$\binom{q-1}{p}$; consequently \cite[Theorem~2.1]{landsberg2015koszul} 
\[
\rank\matr{K}_{\set{A}}^{p}(\tens{T})
\le\binom{q-1}{p}\br(\tens{T}).
\]
For $q=3$ and $p=1$ this agrees, up to choices of bases, with
\eqref{eq:koszul-matrix}. For $q=5$ and $p=2$ the divisor is six.

\begin{proposition}[Five-slice Hankel bound]\label{prop:five-slice}
There is a linear map on the first factor of $\tens{H}$ with image
$\widehat{\tens{H}}\in\CC^5\otimes\CC^n\otimes\CC^n$ such that
$\rank\matr{K}_{\CC^5}^{2}(\widehat{\tens{H}})\ge6\rho-2$.
Consequently, for every $\tens{T}\in\CC^5\otimes\set{B}\otimes\set{C}$,
one has 
\[
\br(\tens{H}\oplus\tens{T})\ge
\rho+\lceil(\rank\matr{K}_{\CC^5}^{2}(\tens{T})-2)/6\rceil.
\]
\end{proposition}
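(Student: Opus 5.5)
The plan follows the pattern of \cref{prop:border-certificate}. I first construct an explicit compression of $\tens{H}$ and bound the rank of its higher Koszul map from below through a congruence-type identity with the middle catalecticant. I then combine this with the partner exactly as in \cref{lem:joint-koszul}.

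\emph{Construction.} For $m=3$ we have $\tens{H}=(h_{i+j+k})$ with $0\le i,j,k<n$ and $D=3(n-1)$. Put $\ell=n-1$. On the first factor I apply the coordinate selection $\vect{z}\mapsto(z_{s_0},\dots,z_{s_4})$ with shifts $s_\nu$ spread roughly evenly over $[0,\ell]$, for instance $s_\nu\approx\nu\ell/4$ with $s_0=0$ and $s_4=\ell$. The slices are then the $n\times n$ Hankel matrices $\matr{M}_\nu=(h_{i+k+s_\nu})$.

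\emph{Lower bound.} In bases $\bigwedge^2\CC^5$ and $\bigwedge^3\CC^5$, the map $\matr{K}^2_{\CC^5}(\widehat{\tens{H}})$ is a $10n\times10n$ block matrix whose nonzero blocks are $\pm\matr{M}_\nu$. I will try to factor it, or a submatrix of it, as $\matr{E}\,(\text{block-diagonal copies of }\matr{C}\text{ and }\matr{C}^{\mathsf T})\,\matr{E}'$. Here $\matr{C}$ is the middle catalecticant of size about $(n+\ell/2)\times(n+\ell/2)$, and $\matr{E},\matr{E}'$ are shift-and-sign maps generalizing \eqref{eq:fixed-map}. There are six copies, and the maps should fail to be injective on a subspace of dimension at most $2$ after composing with $\matr{C}$. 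This would give rank at least $6\rho-2$.

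A more robust route avoids an exact identity. Restrict the domain to a Koszul subcomplex and use the moment model of \cref{lem:moment}. The Hankel slices act as multiplication by $t^{s_\nu}$ composed with the Frobenius pairing on $\set{A}=\CC[t]/(g_H)$, so the Koszul map becomes $\set{A}$-linear in a suitable sense. Its rank can then be computed as $\binom{4}{2}\dim\set{A}$ minus a correction coming from the homology of the Koszul complex of $(t^{s_0},\dots,t^{s_4})$ acting on a module of rank one. That homology is concentrated at the ends and should contribute a defect of at most $2$.

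\emph{Direct sum.} For the consequence, given $\tens{T}\in\CC^5\otimes\set{B}\otimes\set{C}$, I apply the common-first-factor construction of \cref{lem:joint-koszul} with $\CC^5$ in place of $\CC^3$. Mixed blocks are killed, and the two first-factor maps are summed into one $\CC^5$. The Koszul map of the resulting tensor is a direct sum after permutation, so
\[
6\,\br(\tens{H}\oplus\tens{T})\ge 6\rho-2+\rank\matr{K}^2_{\CC^5}(\tens{T}).
\]
Dividing by $6$ and using integrality of $\br$ yields $\br(\tens{H}\oplus\tens{T})\ge\rho+\lceil(\rank\matr{K}^2_{\CC^5}(\tens{T})-2)/6\rceil$, since $\lceil(6\rho-2+r)/6\rceil=\rho+\lceil(r-2)/6\rceil$. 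This step is routine.

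The main obstacle is the rank estimate $6\rho-2$. The degree windows must be chosen so that every one of the six copies of the catalecticant sees a full window of length at least $\rho$. When $n$ is small relative to $\rho$, for example $\rho>n$ with $a>0$, the $n\times n$ slices are too short. In that regime I expect to need the moment algebra description, together with the surjectivity $\set{V}^2=\set{A}$ from \cref{lem:moment}, to recover full rank up to the boundary defect $2$. Small cases such as $n\le2$ would be checked directly.
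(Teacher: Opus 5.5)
You have the construction right (five balanced shifts $s_0=0,\dots,s_4=\ell$), and your direct-sum step matches the paper: a common $\CC^5$, a block-diagonal Koszul map, divisor $\binom42=6$, and $\lceil(6\rho-2+r)/6\rceil=\rho+\lceil(r-2)/6\rceil$.

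The gap is the inequality $\rank\matr{K}^2_{\CC^5}(\widehat{\tens{H}})\ge6\rho-2$ itself, which is the whole content of the proposition. Your first route only asserts that a suitable factorization should exist. The mechanism in your second route cannot work as stated. Since $s_0=0$, the sequence $(t^{s_0},\dots,t^{s_4})$ contains the unit $1$, so its Koszul complex on any $\set{A}$-module is exact. An $\set{A}$-linear count would therefore predict rank exactly $6\rho$, with no correction.

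The defect is nevertheless real. Take $n=3$ and $\rho=4$; then the first factor of $\tens{H}$ is $\CC^3$. For any compression to $\CC^5$, the map $\matr{K}^2_{\CC^5}$ splits over the (at most three-dimensional) first-factor support into these pieces:
\begin{itemize}
\item one copy each of the degree-$0$ and degree-$2$ Koszul maps, of rank at most $3$ each;
\item two copies of the three-slice map $\matr{K}$, of rank at most $2\br(\tens{H})=8$ each.
\end{itemize}
So the rank is at most $22=6\rho-2$. The loss comes from confining the inputs to $\CC[t]_{<n}$, which is not an $\set{A}$-module. What must be controlled is a graded statement about truncated degree windows, not Koszul homology.

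The missing idea is a homogeneous factorization of the Koszul pairing that generalizes \eqref{eq:fixed-identity}.
\begin{itemize}
\item Identify $\CC^n$ with binary forms $\set{R}_\ell$ of degree $\ell=n-1$, and put $f_i=x^{\ell-s_i}y^{s_i}$ with $s_i=\delta_1+\cdots+\delta_i$.
\item Let $\matr{Q}$ be the $5\times4$ syzygy matrix with columns $y^{\delta_i}\vect{e}_{i-1}-x^{\delta_i}\vect{e}_i$; its signed maximal minors are the $f_i$.
\item The form $\operatorname{vol}(\vect{f}\wedge\cdot\wedge\cdot)$ equals $(\bigwedge^2\matr{Q})\matr{J}(\bigwedge^2\matr{Q})^{\mathsf T}$. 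Hence the Koszul pairing is $L((\matr{\Psi}\vect{u})^{\mathsf T}\matr{J}(\matr{\Psi}\vect{v}))$, with $\matr{\Psi}=(\bigwedge^2\matr{Q})^{\mathsf T}:\set{R}_\ell^{10}\to\bigoplus_{i<j}\set{R}_{\ell+\delta_i+\delta_j}$.
\item $\matr{\Psi}$ is surjective. Take the second exterior power of $0\to\mathcal O(-\ell)\to\mathcal O^5\to\bigoplus_i\mathcal O(\delta_i)\to0$, twist by $\mathcal O(\ell)$, and use $H^1(\mathbb P^1,\bigoplus_i\mathcal O(\delta_i))=0$.
\item The pairing then splits into six complementary catalecticant blocks $\set{R}_{\ell+\delta_i+\delta_j}\times\set{R}_{2\ell-\delta_i-\delta_j}$. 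Their ranks are $\min\{\rho,\ell+\delta_i+\delta_j+1,2\ell-\delta_i-\delta_j+1\}$.
\end{itemize}

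So, unlike in your first route, nothing is lost in a kernel: the reduction map is exactly surjective. The $-2$ arises only when $\ell=4k+2$. Then two blocks are off-middle (degrees $6k+2$ and $6k+4$), and each has rank $\rho-1$ when $\rho=6k+4$. The same catalecticant formula handles the compressed regime $\rho>n$ automatically, so no separate argument with $\set{V}^2=\set{A}$ or small-case checks are needed.
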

\begin{proof}
The case $\tens{H}=0$ follows directly from the Koszul bound,
so assume $\rho>0$. Put $\ell=n-1$ and
$\set{R}_j=\CC[x,y]_j$. The moments define
$L:\set{R}_{3\ell}\to\CC$ by $L(x^{3\ell-k}y^k)=h_k$.
The pairing $L(uv)$ on $\set{R}_j\times\set{R}_{3\ell-j}$ has rank
$\min\{\rho,j+1,3\ell-j+1\}$.
Indeed, after the binary coordinate change in \cref{sec:moment},
this is the Frobenius pairing of \cref{lem:moment} pulled back
through the two polynomial spaces. At least one degree is at
least $\rho-1$, so that space maps onto the moment algebra;
the other image has the stated dimension. Binary coordinate
changes preserve the ranks of these pairings.

Choose nonnegative integers $\delta_1,\ldots,\delta_4$ with sum
$\ell$ and pairwise differences at most one. Set $s_0=0$ and
$s_i=\delta_1+\cdots+\delta_i$. Select the five Hankel slices
with indices $s_0,\ldots,s_4$, allowing repeated selections as
in \cref{prop:border-certificate}. We prove the exact identity
\begin{equation}\label{eq:five-slice-rank}
\rank\matr{K}_{\CC^5}^{2}(\widehat{\tens{H}})
=\sum_{1\le i<j\le4}
\min\{\rho,\ell+\delta_i+\delta_j+1,
                 2\ell-\delta_i-\delta_j+1\}.
\end{equation} 
Put $f_i=x^{\ell-s_i}y^{s_i}$ for $0\le i\le4$ and define
the polynomial column vector $\vect{f}=(f_0,\ldots,f_4)^{\mathsf T}$.
Let $\matr{Q}$ be the $5\times4$ polynomial matrix whose $i$th column is
$y^{\delta_i}\vect{e}_{i-1}-x^{\delta_i}\vect{e}_i$.
Then $\matr{Q}^{\mathsf T}\vect{f}=0$, and its signed maximal minors
are $f_0,\ldots,f_4$. Since these have no common zero on
$\mathbb P^1$, they give the exact sequence
\[
0\to\mathcal O(-\ell)\to\mathcal O^5\to\set{E}\to0,
\] 
where $\set{E}=\bigoplus_{i=1}^4\mathcal O(\delta_i)$ is a vector bundle.
Taking the second exterior power and twisting by
$\mathcal O(\ell)$ gives an exact sequence with kernel $\set{E}$
and quotient $\bigoplus_{i<j}\mathcal O(\ell+\delta_i+\delta_j)$.
Here $\mathcal O(j)$ is the degree-$j$ line bundle on $\mathbb P^1$.
Because every $\delta_i\ge0$, one has $H^1(\mathbb P^1,\set{E})=0$.
Thus the coefficient map
\[
\matr{\Psi}=(\bigwedge\nolimits^2\matr{Q})^{\mathsf T}:
\set{R}_\ell^{10}\to\bigoplus_{i<j}\set{R}_{\ell+\delta_i+\delta_j}
\] 
is surjective.

Let $\matr{J}$ be the complementary-pair matrix on
$\bigwedge\nolimits^2\CC^4$: its $(\set{I},\set{I}^c)$ entry is the sign of the
concatenation $(\set{I},\set{I}^c)$, and its other entries are zero.
The exterior minor identity gives
$\matr{B}_{\vect{f}}=(\bigwedge\nolimits^2\matr{Q})\matr{J}
(\bigwedge\nolimits^2\matr{Q})^{\mathsf T}$, where $\matr{B}_{\vect{f}}$ represents
$(\vect{u},\vect{v})\mapsto\operatorname{vol}(\vect{f}\wedge \vect{u}\wedge \vect{v})$ on
$\bigwedge\nolimits^2\CC^5$.
To check the identity, intersecting row pairs give zero on both
sides; disjoint pairs give the corresponding signed maximal
minor of $\matr{Q}$.
Applying $L$ to its entries shows that the Hankel Koszul matrix
represents \[
(\vect{u},\vect{v})\mapsto L((\matr{\Psi} \vect{u})^{\mathsf T}
\matr{J}(\matr{\Psi} \vect{v}))
\] on $\set{R}_\ell^{10}$.
Surjectivity of $\matr{\Psi}$ preserves the rank of this pairing.
Each pair $\{i,j\}$ couples only to its complement, with polynomial
degrees $\ell+\delta_i+\delta_j$ and
$2\ell-\delta_i-\delta_j$. The preceding catalecticant rank
formula therefore proves \eqref{eq:five-slice-rank}.

Write $\ell=4k+\varepsilon$ with $0\le \varepsilon\le3$.
For $\varepsilon=0,1,3$, all six terms in \eqref{eq:five-slice-rank}
equal $\rho$. For $\varepsilon=2$, the pair sums have value $2k$ once,
value $2k+1$ four times, and value $2k+2$ once.
The apolar-degree bound gives $\rho\le\lfloor3\ell/2\rfloor+1=6k+4$
in this case. Thus four terms equal $\rho$ and two equal
$\min\{\rho,6k+3\}$. Only when $\rho=6k+4$ do two terms equal
$\rho-1$; otherwise all equal $\rho$.
Thus the rank is always at least $6\rho-2$.
Finally, identify the compressed first factor with the first
factor of $\tens{T}$ and retain the other factors in separate
blocks. As in \cref{lem:joint-koszul}, the resulting Koszul map
is the direct sum of the two maps. Its rank is at least
$6\rho-2+\rank\matr{K}_{\CC^5}^{2}(\tens{T})$.
Dividing by six and rounding up proves the assertion.
\end{proof}

\section{Symmetric third-order partners in at most five variables}
\label{sec:cubic-symmetric}

We now establish the partner bounds needed for the second condition
in \cref{thm:cubic-partners}. Let $\set{V}$ be the dual of the essential support of a symmetric
third-order tensor $\tens{S}$, so that $\tens{S}\in\Sym^3(\set{V}^*)$.
We associate to it the cubic form
\[
F_S(\vect{x})=\tens{S}(\vect{x},\vect{x},\vect{x}).
\]
The polar at $\vect{e}$ is the bilinear form
$\tens{S}(\vect{e},\cdot,\cdot)$, whose matrix is
$\frac16\operatorname{Hess}(F_S)(\vect{e})$.
We use $\br(F_S)$ and $\bs(F_S)$ for the corresponding ranks
of $\tens{S}$.
The main step is to show that a five-variable cubic whose
three-slice bound is at most six also has symmetric border rank
at most six, provided that it has an invertible polar.

\subsection{Cubic forms from small algebras}
We use Frobenius functionals in the sense of \cref{sec:moment}.
For a functional $\lambda$ on a commutative unital algebra,
write $\beta_\lambda(\vect{u},\vect{v})=\lambda(\vect{u} \vect{v})$.

\begin{lemma}[Small algebra bounds]\label{lem:small-algebra}
Let $\set{A}$ be a finite-dimensional commutative unital algebra
over $\CC$, and let $\lambda\in\set{A}^*$.
The following statements hold.
\begin{enumerate}
\item If $\dim\set{A}=t\le6$ and $\lambda$ is Frobenius,
the cubic $\vect{u}\mapsto\lambda(\vect{u}^3)$ has ordinary and symmetric
border rank $t$.
\item Suppose $\dim\set{A}=4$ and $\beta_\lambda$ has a
one-dimensional radical $\set{I}$. If $\eta,\mu\in\set{A}^*$
and $\eta|_{\set{I}}\ne0$, then, for every $c\in\CC$, the cubic
\begin{equation}\label{eq:corank-cubic}
F(\vect{u},z)=\lambda(\vect{u}^3)+3z\eta(\vect{u}^2)+3z^2\mu(\vect{u})+cz^3
\end{equation}
has symmetric border rank at most six.
\end{enumerate}
\end{lemma}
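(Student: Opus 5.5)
For part (i), the plan is to squeeze the border rank between matching upper and lower bounds.

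\emph{Upper bound.} I would use the fact that every commutative algebra of dimension at most six is smoothable. By Cartwright, Erman, Velasco and Viray, the Hilbert scheme of points in any affine space is irreducible for length at most seven. Hence $\set{A}$ is a flat limit of reduced algebras $\set{A}_\epsilon\simeq\CC^t$. I would lift $\lambda$ to functionals $\lambda_\epsilon$ on $\set{A}_\epsilon$. Concretely, fix a basis of $\set{A}$ and transport it along the flat family, so that $\lambda_\epsilon$ has the same coordinates. On $\CC^t$ the cubic $\vect{u}\mapsto\lambda_\epsilon(\vect{u}^3)=\sum_{\alpha}c_\alpha u_\alpha^3$ has symmetric rank at most $t$, since each coordinate $u_\alpha$ is a linear form in $\vect{u}$. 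Taking the limit gives $\bs\le t$, and hence $\br\le\bs\le t$.

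\emph{Lower bound.} Here I would use the Frobenius condition. A polar of the cubic at the unit $1$ is the bilinear form $\beta_\lambda$, which is nondegenerate of rank $t$. The flattening rank of the cubic is therefore at least $t$, which gives $\br\ge t$.

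For part (ii), the plan is to build an explicit degeneration of symmetric rank-six cubics converging to $F$. I would treat $\eta$ and $\mu$ through the radical direction. Let $\set{I}=\CC w$, and note that $\lambda(w\vect{u})=0$ for all $\vect{u}$. Then $\lambda$ descends to a Frobenius functional on a three-dimensional quotient of $\set{A}$, but only if $\set{I}$ is an ideal. It is one, because $\lambda(wv\vect{u})=\lambda(w(v\vect{u}))=0$, so $wv$ also lies in the radical.

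Next I would form the extended algebra $\set{A}'=\set{A}\oplus\CC z$, or a square-zero type extension. The aim is to realize $F$ as $\Lambda(\vect{x}^3)$ for a functional $\Lambda$ on an algebra of dimension at most six. To do this I would choose the multiplication of $z$ with $\set{A}$ so that $\Lambda(z\vect{u}^2)=\eta(\vect{u}^2)$, $\Lambda(z^2\vect{u})=\mu(\vect{u})$, and $\Lambda(z^3)=c$. A natural candidate is $\set{A}'=\set{A}[z]/(z^2-\ldots)$ of dimension $5$ or $6$.

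I would then show that $\Lambda$ is Frobenius, or at least that the resulting cubic is a limit of such cubics. Nondegeneracy should follow from $\eta|_{\set{I}}\ne0$: this pairing couples $w$ with $z$ and so kills the radical. Once $\Lambda$ is Frobenius on an algebra of dimension at most six, part (i) gives $\bs\le6$. If it is not Frobenius, I would restrict to the image under a Frobenius quotient, or perturb $\mu$ and $c$ and pass to the limit.

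The main obstacle is constructing an algebra structure of dimension at most six that reproduces arbitrary $\eta$, $\mu$ and $c$. I would first try the trivial extension $\set{A}\ltimes\set{A}^*$ restricted to a suitable six-dimensional subalgebra. Failing that, I would try a direct $\epsilon$-family of six cubes that matches $F$ order by order in $\epsilon$.
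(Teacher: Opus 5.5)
Part (i) is correct and follows the paper: smooth $\set{A}$, hold $\lambda$ constant in a transported basis, read off $t$ cubes on a reduced fiber, and get $\br\ge t$ from the polar $\beta_\lambda$ at the unit. Citing smoothability of all algebras of length at most seven, rather than irreducibility of the Gorenstein locus, is harmless. It even removes the need for $\lambda$ to be Frobenius in the upper bound.

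Part (ii) has a genuine gap. You never produce the algebra, and the kind of algebra you propose does not exist for generic data. In your set-up $\set{A}$ is a subalgebra of $\set{A}'$ and $F(\vect{u},z)=\Lambda((\vect{u}+z\zeta)^3)$ for a new element $\zeta$. Comparing coefficients of $z$ and polarizing forces $\Lambda|_{\set{A}}=\lambda$, $\Lambda(\zeta a)=\eta(a)$ and $\Lambda(\zeta^2a)=\mu(a)$ for all $a\in\set{A}$. Take $\set{A}=\CC^4$ with idempotents $e_0,\dots,e_3$ and $\set{I}=\CC e_0$, so $\lambda(e_0)=0\ne\lambda(e_i)$ for $i\ge1$. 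Then $\set{A}'\supseteq\bigoplus_i e_i\set{A}'$. If some $e_i\set{A}'=\CC e_i$, then $\zeta e_i=\alpha_ie_i$, which gives $\eta(e_i)=\alpha_i\lambda(e_i)$ and $\mu(e_i)=\alpha_i^2\lambda(e_i)$. For $i=0$ this contradicts $\eta(e_0)\ne0$. For $i\ge1$ it fails whenever $\mu(e_i)\ne\eta(e_i)^2/\lambda(e_i)$. So for generic $\mu$ every summand has dimension at least two, and $\dim\set{A}'\ge8$. This rules out $\set{A}[z]/(z^2-\cdots)$, which has dimension eight anyway, and every six-dimensional subalgebra of $\set{A}\ltimes\set{A}^*$ containing $\set{A}$. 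Whether $\Lambda$ is Frobenius is beside the point.

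The missing idea is that $\set{A}$ must be mapped into the larger algebra only linearly, not as a subalgebra. In the reduced case the paper completes three cubes, $y_i=x_i+(\eta_i/\lambda_i)z$ and $w=\sqrt{\eta_0}\,(x_0+\mu_0z/(2\eta_0))$. This gives $F=\sum_{i=1}^3\lambda_iy_i^3+3zw^2+3z^2L$, and the remainder is the limit of $((z+2tw+4t^2L)^3-2(z+tw+t^2L)^3+z^3)/(2t^2)$. Equivalently, $F$ is the cube functional of $\CC^3\times\CC[\epsilon]/(\epsilon^3)$ pulled back by a linear map sending $e_0$ to the nilpotent $\sqrt{\eta_0}\,\epsilon$.

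You also need a separate step for nonreduced $\set{A}$. The paper deforms the pair $(\set{A},\set{I})$ to reduced algebras while keeping $\lambda(\set{I}_t)=0$, $\eta(\set{I}_t)\ne0$ and a nondegenerate quotient pairing, and then concludes by closedness. With your smoothability input this step can be shortened. Take any smoothing with $\lambda,\eta,\mu$ held fixed. On a reduced fiber with idempotents $e_{i,t}$, semicontinuity of the rank of $\beta_\lambda$ leaves at most one vanishing $\lambda(e_{i,t})$. Completing cubes then leaves either four cubes plus $3z^2M$, or three cubes plus $3z(\eta_0x_0^2+zM)$. These remainders have symmetric border rank at most two and three respectively, so $\bs\le6$ on the fiber and hence in the limit.
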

\begin{proof}
For (i), a Frobenius algebra is Gorenstein. In these lengths,
the Gorenstein locus is irreducible by
\cite[Theorem~A]{casnati2015gorenstein}. Its nonempty open locus of
reduced schemes is therefore dense, so the algebra is smoothable.
Trivialize its underlying vector space in a smoothing and extend
$\lambda$ linearly. On a reduced fiber the cubic is a weighted
sum of $t$ coordinate cubes, giving $\bs\le t$ in the limit.
The polar at the unit is $\beta_\lambda$, so ordinary matrix
flattening gives $\br\ge t$.

For (ii), the radical $\set{I}$ is an ideal on which $\lambda$
vanishes, and the induced pairing on $\set{A}/\set{I}$ is Frobenius.
First take $\set{A}=\CC^4$, with the first primitive idempotent
spanning $\set{I}$. In these coordinates, the cubic has the form
\[
F=\sum_{i=1}^3\lambda_i x_i^3
+3z\sum_{i=0}^3\eta_i x_i^2
+3z^2\sum_{i=0}^3\mu_i x_i+cz^3,
\] 
where $\lambda_1\lambda_2\lambda_3\eta_0\ne0$.
Set $y_i=x_i+(\eta_i/\lambda_i)z$ for $1\le i\le3$ and
$w=\sqrt{\eta_0}(x_0+\mu_0z/(2\eta_0))$.
Completing the three cubes writes
$F=\sum_{i=1}^3\lambda_i y_i^3+3zw^2+3z^2L$
for a linear form $L$.
The remainder is the limit, as $t\to0$, of
\[
((z+2tw+4t^2L)^3-2(z+tw+t^2L)^3+z^3)/(2t^2).
\]
It therefore has symmetric border rank at most three, proving
the bound for the reduced algebra.

To pass to a general algebra, deform the pair $(\set{A},\set{I})$.
Exactly one local factor $\set{A}_*$ contains $\set{I}$; put
$h=\dim\set{A}_*$. The other factors are Gorenstein of length
at most three, hence are of the form $\CC[x]/(x^j)$.
If $h=1$, the distinguished factor is already reduced; if $h=2$,
it is $\CC[x]/(x^2)$ with $\set{I}=(x)$.
For $h=3,4$, the Gorenstein quotient is
$\set{A}_*/\set{I}\simeq\CC[x]/(x^{h-1})$.
Choose a lift $x$ in the maximal ideal and a generator $\vect{v}$ of
$\set{I}$. The maximal ideal annihilates its one-dimensional
ideal $\set{I}$, since its action there is nilpotent.
Thus $1,x,\ldots,x^{h-2},\vect{v}$ is a basis, with
$x^{h-1}=\theta\vect{v}$ and $x\vect{v}=\vect{v}^2=0$ for some $\theta\in\CC$.
These relations determine an associative multiplication for every
$\theta$. If $\theta=0$, deform it to a nonzero value while keeping the
functionals fixed. The ideal and its Frobenius quotient pairing
are unchanged. For $\theta\ne0$, the factor is $\CC[x]/(x^h)$.

It remains to split the roots while retaining the distinguished
ideal. Choose distinct $a_1,\ldots,a_h$ and set
\[
p_t(x)=\prod_{j=1}^h(x-ta_j),\ \ q_t(x)=\prod_{j=1}^{h-1}(x-ta_j).
\]
In $\set{A}_t=\CC[x]/(p_t)$, the ideal $\set{I}_t=(q_t)$ has
dimension one and the quotient is $\CC[x]/(q_t)$.
For $h\ge2$, use the basis
$1,x,\ldots,x^{h-2},q_t/\theta$, taking $\theta=1$ when $h=2$.
Keep the values of $\lambda,\eta,\mu$ fixed in this basis.
In particular, the extended functionals satisfy
$\lambda_t(q_t)=0$ and $\eta_t(\set{I}_t)\ne0$.
The quotient pairing remains nondegenerate for general $t$
because it is nondegenerate at $t=0$.
Split the roots in the other local factors in the same way.
The general algebra is reduced, and its cubic has symmetric
border rank at most six by the preceding calculation.
Closedness of this rank locus, first under root splitting and
then under the deformation of $\theta$, proves (ii).
\end{proof}

The following observation will be used in several branches of
the argument. 
It also identifies the associated Frobenius algebra. 
\begin{lemma}[Commuting polars]\label{lem:commuting-polars}
Let $\tens{S}$ be a concise symmetric cubic tensor on a space
of dimension $d\le6$, and suppose its polar at $\vect{e}$ is invertible.
Define the bilinear form $\langle \vect{x},\vect{y}\rangle=\tens{S}(\vect{e},\vect{x},\vect{y})$
and the operators $\matr{L}_{\vect{x}}$ by
$\langle\matr{L}_{\vect{x}} \vect{y},\vect{z}\rangle=\tens{S}(\vect{x},\vect{y},\vect{z})$.
If these operators commute, then $\br(\tens{S})=\bs(\tens{S})=d$.
\end{lemma}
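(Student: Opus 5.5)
The plan is to build a commutative Frobenius algebra structure on the underlying space $\set{V}$ from the commuting operators, and then apply \cref{lem:small-algebra}(i).

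\emph{Step 1: symmetry of the operators.} The form $\langle\cdot,\cdot\rangle$ is symmetric and nondegenerate, since it is the invertible polar of a symmetric tensor. Because $\tens{S}$ is symmetric in its last two slots, each $\matr{L}_{\vect{x}}$ is self-adjoint for $\langle\cdot,\cdot\rangle$. The definition also gives $\matr{L}_{\vect{e}}=\mathrm{id}$.

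\emph{Step 2: the product.} Define $\vect{x}\vect{y}=\matr{L}_{\vect{x}}\vect{y}$. Full symmetry of $\tens{S}$ gives
\[
\langle\matr{L}_{\vect{x}}\vect{y},\vect{z}\rangle=\langle\matr{L}_{\vect{y}}\vect{x},\vect{z}\rangle,
\]
so the product is commutative, and $\vect{e}$ is a unit. For associativity I would compare $(\vect{x}\vect{y})\vect{z}$ with $\vect{x}(\vect{y}\vect{z})$:
\[
(\vect{x}\vect{y})\vect{z}=\matr{L}_{\vect{z}}\matr{L}_{\vect{x}}\vect{y},\qquad
\vect{x}(\vect{y}\vect{z})=\matr{L}_{\vect{x}}\matr{L}_{\vect{z}}\vect{y}.
\]
These agree precisely because the operators commute. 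Bilinearity is clear, so $\set{A}=(\set{V},\cdot,\vect{e})$ is a commutative unital associative algebra of dimension $d$.

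\emph{Step 3: the functional.} Set $\lambda(\vect{x})=\langle\vect{e},\vect{x}\rangle$. Then
\[
\lambda(\vect{x}\vect{y})=\langle\vect{e},\matr{L}_{\vect{x}}\vect{y}\rangle=\tens{S}(\vect{e},\vect{x},\vect{y})=\langle\vect{x},\vect{y}\rangle,
\]
which is nondegenerate, so $\lambda$ is Frobenius. Moreover
\[
\lambda(\vect{x}\vect{y}\vect{z})=\langle\vect{x}\vect{y},\vect{z}\rangle=\tens{S}(\vect{x},\vect{y},\vect{z}),
\]
so $F_S(\vect{u})=\lambda(\vect{u}^3)$. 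Applying \cref{lem:small-algebra}(i) with $t=d\le6$ gives $\br(\tens{S})=\bs(\tens{S})=d$.

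\emph{Identifying the ambient space.} The one point needing care is which space is which. Here $\tens{S}\in\Sym^3(\set{V}^*)$ is a trilinear form on $\set{V}$, so the algebra lives on $\set{V}$. Conciseness is needed only so that the border ranks computed on $\set{V}$ equal those of the original tensor, which holds by the support invariance of \cref{sec:criterion}. Invertibility of the polar is what makes $\matr{L}_{\vect{x}}$ well defined. I do not expect a real obstacle; associativity via commutation is the key observation, and everything else is formal.
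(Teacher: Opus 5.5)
Your proof is correct and follows essentially the paper's argument. The paper builds the same commutative Frobenius algebra as the unital algebra generated by the commuting self-adjoint operators $\matr{L}_{\vect{x}}$, identifies it with the input space through the cyclic orbit map $a\mapsto a\vect{e}$ with $\lambda(a)=\langle\vect{e},a\vect{e}\rangle$, and then applies \cref{lem:small-algebra}(i) exactly as you do; your direct product $\vect{x}\vect{y}=\matr{L}_{\vect{x}}\vect{y}$, with associativity from $\matr{L}_{\vect{z}}\matr{L}_{\vect{x}}=\matr{L}_{\vect{x}}\matr{L}_{\vect{z}}$, is a more hands-on form of that identification, and it is the form the paper itself uses in the last case of the proof of \cref{prop:six-cubes}.
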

\begin{proof}
One has $\matr{L}_{\vect{e}}=\matr{I}$ and $\matr{L}_{\vect{x}} \vect{e}=\vect{x}$, and each
$\matr{L}_{\vect{x}}$ is self-adjoint for the nondegenerate bilinear form.
The unital algebra $\set{A}$ generated by the operators is
commutative, and $\vect{e}$ is cyclic. Thus the orbit map $a\mapsto a\vect{e}$
is an isomorphism from $\set{A}$ to the input space: if $a\vect{e}=0$,
commutativity gives $a(b\vect{e})=0$ for every $b\in\set{A}$.
Under this identification, the functional
$\lambda(a)=\langle \vect{e},a\vect{e}\rangle$ is Frobenius, since $\lambda(ab)=\langle a\vect{e},b\vect{e}\rangle$.
The cubic is $\lambda(a^3)$, so \cref{lem:small-algebra}(i)
gives the result.
\end{proof}

\subsection{The five-variable commuting extension}
\begin{proposition}[A six-cube border bound]\label{prop:six-cubes}
Let $\tens{S}\in\Sym^3(\set{V}^*)$ be concise, with $\dim\set{V}=5$.
Suppose that $\tens{S}$ has an invertible polar.
If $\kappa(\tens{S})\le6$, then $\bs(\tens{S})\le6$.
\end{proposition}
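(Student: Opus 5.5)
We work in the normal form given by the invertible polar. Fix $\vect{e}$ with $\langle\vect{x},\vect{y}\rangle=\tens{S}(\vect{e},\vect{x},\vect{y})$ nondegenerate, and define the self-adjoint operators $\matr{L}_{\vect{x}}$ as in \cref{lem:commuting-polars}, so that $\matr{L}_{\vect{e}}=\matr{I}$. If all $\matr{L}_{\vect{x}}$ commute, \cref{lem:commuting-polars} already gives $\bs(\tens{S})=5\le6$. The content of the proposition is therefore to extract commutation information from the hypothesis $\kappa(\tens{S})\le6$ and to build a commuting extension of size six.

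\emph{Step 1: from the Koszul bound to a commutator rank bound.} Take three slices in directions $\vect{e},\vect{x},\vect{y}$, and use the polar at $\vect{e}$ to identify the slices with $\matr{I},\matr{L}_{\vect{x}},\matr{L}_{\vect{y}}$. Block elimination on \eqref{eq:koszul-matrix} then gives
\[
\rank\matr{K}=2\cdot5+\rank[\matr{L}_{\vect{x}},\matr{L}_{\vect{y}}].
\]
So $\kappa(\tens{S})\le6$ forces $\rank[\matr{L}_{\vect{x}},\matr{L}_{\vect{y}}]\le2$ for all $\vect{x},\vect{y}$. This is the standard Strassen/Landsberg–Ottaviani computation. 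Each commutator is skew-adjoint for $\langle\cdot,\cdot\rangle$, so its rank is even; hence every commutator has rank $0$ or $2$.

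\emph{Step 2: a commuting extension by one variable.} We aim to show that the cubic is a specialization of a cubic of the form \eqref{eq:corank-cubic} on a $6$-dimensional space, or of $\lambda(\vect{u}^3)$ for a $6$-dimensional Frobenius algebra. In the first case we apply \cref{lem:small-algebra}(ii); in the second, \cref{lem:small-algebra}(i). Either way $\bs\le6$, and restriction to a hyperplane does not increase symmetric border rank.

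Concretely, the plan is to find a $4$-dimensional subspace $\set{W}\ni\vect{e}$ on which the compressed operators commute, so that $\set{W}$ becomes a $4$-dimensional algebra $\set{A}$ with functional $\lambda$. The one extra direction $z$ then enters only through the terms $\eta,\mu,c$ of \eqref{eq:corank-cubic}. Dually, one can try to enlarge the space by one dimension, $\set{V}\oplus\CC$, so that the operators extend to a commuting family of $6\times6$ matrices with a cyclic vector. Since each commutator has rank at most two, a rank-one correction on each side should suffice.

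I would treat this step by cases on the commutator span $\set{C}=\Span\{[\matr{L}_{\vect{x}},\matr{L}_{\vect{y}}]\}$. The main case is when the images of all commutators lie in a common $2$-dimensional subspace $\set{P}$, or all kernels contain a common $3$-dimensional subspace. A symmetric argument using self-adjointness, namely $\ker=\operatorname{im}^{\perp}$, reduces to one of these alternatives. This reduction is the familiar classification of linear spaces of rank-$\le2$ skew matrices: either all share a $2$-dimensional image, or all are supported on a common $3$-dimensional space, where rank-$2$ skew $3\times3$ matrices live. In the first alternative, the form restricted to $\set{P}^{\perp}$ and the quotient give the one-dimensional-radical situation of \cref{lem:small-algebra}(ii). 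In the second, a $3$-dimensional space carries a skew part, which we absorb with a single extra variable via the Frobenius pairing, landing in a $6$-dimensional algebra for \cref{lem:small-algebra}(i).

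\emph{Main obstacle.} The hard part is Step 2: constructing the extension explicitly and verifying that the resulting $6$-dimensional algebra is associative and Frobenius. Equivalently, one must verify that the hypothesis $\eta|_{\set{I}}\ne0$ of \cref{lem:small-algebra}(ii) holds; degenerate subcases where it fails must be shown to reduce to the commuting case or to fewer variables, where \cref{lem:commuting-polars} applies. I would first try to show directly that the hyperplane $\set{P}^{\perp}$ (or the kernel intersection) is closed under the compressed products, yielding $\set{A}$ with a one-dimensional radical. The case split would then be driven by the Jordan type of a general $\matr{L}_{\vect{x}}$.
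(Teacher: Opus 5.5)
\textbf{There is a genuine gap in Step~2; Step~1 is correct.} Your Step~1 is the paper's first reduction: $\rank\matr{K}(\matr{I},\matr{L}_{\vect{x}},\matr{L}_{\vect{y}})=10+\rank[\matr{L}_{\vect{x}},\matr{L}_{\vect{y}}]$, so every commutator has rank at most two.

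The hypothesis controls only the individual commutators. These are the images of \emph{decomposable} bivectors under the linear map $\matr{C}:\vect{x}\wedge\vect{y}\mapsto[\matr{L}_{\vect{x}},\matr{L}_{\vect{y}}]|_{\set{U}}$ on $\bigwedge\nolimits^2\set{U}$, where $\set{U}=\vect{e}^\perp$. It says nothing about the rank of elements of their span. So you cannot apply the classification of linear spaces of rank-$\le2$ skew matrices to $\Span\{[\matr{L}_{\vect{x}},\matr{L}_{\vect{y}}]\}$. (That classification, correctly stated, is ``a common vector in all images, or all images inside one $3$-space'', not ``a common $2$-dimensional image''.) The dichotomy in fact fails in the generic case the proposition is about. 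Take $\sum_{k=1}^6\ell_k^3$ with six general linear forms in five variables. One checks that $[\matr{L}_{\vect{x}},\matr{L}_{\vect{y}}]=\pm\,\matr{G}\vect{x}\wedge\matr{G}\vect{y}$ with $\matr{G}$ invertible on $\set{U}$. Hence the commutators span all of $\bigwedge\nolimits^2\set{U}$, including rank-four elements, and neither of your alternatives holds. At best your case split covers the degenerate situation in which the image of $\matr{C}$ consists of decomposables.

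\textbf{The missing idea.} Treat $\matr{C}$ as an endomorphism of $\bigwedge\nolimits^2\set{U}$ that preserves the Klein quadric $\vect{z}^{\mathsf T}\matr{J}\vect{z}=0$, so that $\matr{C}^{\mathsf T}\matr{J}\matr{C}=\gamma\matr{J}$. Total symmetry of $n_{ijk}$ then gives two further identities: $\matr{C}^{\mathsf T}=\matr{C}$ and the Bianchi identity $\operatorname{tr}(\matr{J}\matr{C})=0$.
\begin{itemize}
\item If $\gamma\ne0$, Chow's theorem gives $\matr{C}=\bigwedge\nolimits^2\matr{G}$ or $\matr{C}=\matr{J}\bigwedge\nolimits^2\matr{G}$. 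Symmetry and Bianchi leave only $\bigwedge\nolimits^2\matr{G}$ with $\matr{G}$ symmetric.
\item If $\gamma=0$, the image is a linear space of decomposables. It is either $\bigwedge\nolimits^2\set{E}$, which is again of the form $\bigwedge\nolimits^2\matr{G}$ with $\operatorname{rank}\matr{G}=3$, or it lies inside $\vect{v}\wedge\set{U}$.
\end{itemize}

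\textbf{Further missing constructions.} In the $\bigwedge\nolimits^2\matr{G}$ cases you still need an explicit construction, not ``a rank-one correction should suffice''. Put $\vect{w}_i=\sqrt{-1}\,\matr{G}\vect{e}_i$. The bordered matrices $\left(\begin{smallmatrix}\matr{L}_i&\vect{w}_i\\\vect{w}_i^{\mathsf T}&z_i\end{smallmatrix}\right)$ commute once the $z_i$ are solved from the Jacobi identity. Cyclicity of $(\vect{e},0)$ then yields a six-dimensional Frobenius algebra.

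In the common-factor case you anticipate only the subcase that lands in \cref{lem:small-algebra}(ii). The commuting hyperplane $\set{W}=\vect{v}^\perp$ gives three subcases:
\begin{itemize}
\item $\set{D}\vect{e}=\set{V}$: a five-dimensional Frobenius cubic plus one cube.
\item $\set{D}\vect{e}=\set{W}$ with $\langle\vect{v},\vect{v}\rangle\ne0$: an orthogonal splitting, giving $4+2$.
\item $\set{D}\vect{e}=\set{W}$ with $\vect{v}$ isotropic: only here does \cref{lem:small-algebra}(ii) apply, after you verify that $\set{W}$ is closed under the products and that $\eta(\vect{v})\ne0$.
\end{itemize}
Note also that \eqref{eq:corank-cubic} is a cubic in five variables. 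It is applied to $F_S$ itself, not to a six-dimensional extension.
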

\begin{proof}
Choose a vector $\vect{e}$ whose polar is invertible and which satisfies $\tens{S}(\vect{e},\vect{e},\vect{e})\ne0$;
these two nonempty open conditions can be imposed simultaneously.
Use the bilinear form and the operators $\matr{L}_{\vect{x}}$
defined in \cref{lem:commuting-polars}.
After rescaling and changing coordinates, its bilinear form is
standard and $\langle \vect{e},\vect{e}\rangle=1$.
Write $\set{V}=\CC \vect{e}\oplus\set{U}$, with $\set{U}=\vect{e}^\perp$.
Then the following identities hold:
\[
\matr{L}_{\vect{e}}=\matr{I},\ \ 
\matr{L}_{\vect{x}} \vect{e}=\vect{x},\ \ 
\matr{L}_{\vect{x}}^{\mathsf T}=\matr{L}_{\vect{x}},\ \ 
\matr{L}_{\vect{x}} \vect{y}=\matr{L}_{\vect{y}} \vect{x}.
\]
All transposes refer to this complex bilinear form, not to a
Hermitian form.
For an orthonormal basis $\vect{e}_1,\ldots,\vect{e}_4$ of $\set{U}$, set
$\matr{L}_i:=\matr{L}_{\vect{e}_i}$ and write
$\matr{L}_i=\left(\begin{smallmatrix}0&\vect{e}_i^{\mathsf T}\\
\vect{e}_i&\matr{N}_i\end{smallmatrix}\right)$.
The matrices $\matr{N}_i$ are symmetric and satisfy
$\matr{N}_i\vect{e}_j=\matr{N}_j\vect{e}_i$.
Each commutator kills $\vect{e}$ and restricts on $\set{U}$ to
$\matr{C}_{ij}=\vect{e}_i\vect{e}_j^{\mathsf T}-\vect{e}_j\vect{e}_i^{\mathsf T}
+[\matr{N}_i,\matr{N}_j]$.
Identify $\vect{u}\wedge \vect{v}$ with $\vect{u} \vect{v}^{\mathsf T}-\vect{v} \vect{u}^{\mathsf T}$ and
let $\matr{C}:\bigwedge\nolimits^2\set{U}\to\bigwedge\nolimits^2\set{U}$ send
$\vect{e}_i\wedge \vect{e}_j$ to $\matr{C}_{ij}$.
Let $\matr{J}$ be the complementary-pair matrix in the ordered
basis $12,13,14,23,24,34$, whose nonzero upper-triangular entries are
\[
J_{12,34}=1,\ \  J_{13,24}=-1,\ \  J_{14,23}=1,
\]
and whose remaining nonzero entries are their transposes.
Put $n_{ijk}=(\matr{N}_i)_{jk}$.
The symmetry of $\matr{N}_i$ and the identities
$\matr{N}_i\vect{e}_j=\matr{N}_j\vect{e}_i$
make $n_{ijk}$ symmetric in all three indices.
Writing $C_{kl,ij}$ for the entry of $\matr{C}$ in row $kl$
and column $ij$, we obtain
$C_{kl,ij}=\delta_{ki}\delta_{lj}-\delta_{kj}\delta_{li}
+\sum_{a=1}^4(n_{ika}n_{jla}-n_{jka}n_{ila})$,
where $\delta$ is the Kronecker delta.
This formula gives $C_{kl,ij}=C_{ij,kl}$ and
$C_{12,34}-C_{13,24}+C_{14,23}=0$.
Consequently, $\matr{C}^{\mathsf T}=\matr{C}$ and
$\operatorname{tr}(\matr{J}\matr{C})
=2(C_{12,34}-C_{13,24}+C_{14,23})=0$.
These are the pair symmetry and the first Bianchi identity. 

Block elimination gives
$\rank\matr{K}(\matr{I},\matr{L}_{\vect{x}},\matr{L}_{\vect{y}})
=10+\rank[\matr{L}_{\vect{x}},\matr{L}_{\vect{y}}]$.
Hence the hypothesis bounds every commutator rank by two.
Equivalently, the map $\matr{C}$ sends decomposable bivectors to
decomposable bivectors or zero.
The decomposable locus is the irreducible quadric
$\vect{z}^{\mathsf T}\matr{J}\vect{z}=0$; therefore
$\matr{C}^{\mathsf T}\matr{J}\matr{C}=\gamma\matr{J}$
for some scalar $\gamma$.
If $\gamma\ne0$, the map is invertible. Otherwise its image
is totally isotropic for the wedge pairing and has dimension
at most three. We treat the resulting possibilities below.

\emph{A symmetric exterior factor.}
First suppose $\matr{C}=\bigwedge\nolimits^2\matr{G}$ for a symmetric
endomorphism $\matr{G}$ of $\set{U}$ with rank at least three.
Set $\vect{w}_i=\sqrt{-1}\,\matr{G}\vect{e}_i$, viewed as vectors in $\set{V}$.
Then \[
[\matr{L}_i,\matr{L}_j]+\vect{w}_i\vect{w}_j^{\mathsf T}
-\vect{w}_j\vect{w}_i^{\mathsf T}=0.
\]
Put $\vect{d}_{ij}=\matr{L}_i\vect{w}_j-\matr{L}_j\vect{w}_i$; symmetry of
$\matr{G}$ gives $\vect{d}_{ij}\in\set{U}$.
The Jacobi identity yields
\[
\vect{d}_{ij}\mathbin{\odot}\vect{w}_k+\vect{d}_{jk}\mathbin{\odot}\vect{w}_i
+\vect{d}_{ki}\mathbin{\odot}\vect{w}_j=0,
\] 
where
$\vect{u}\mathbin{\odot}\vect{v}=\vect{u} \vect{v}^{\mathsf T}+\vect{v} \vect{u}^{\mathsf T}$.
These equations give scalars $z_i$ with
$\vect{d}_{ij}=z_i\vect{w}_j-z_j\vect{w}_i$.
First suppose that $\vect{w}_1,\ldots,\vect{w}_4$ are independent.
For distinct $i,j,k$, the coefficient of
$\vect{w}_k\mathbin{\odot}\vect{w}_k$ in the Jacobi equation
forces the $\vect{w}_k$ component of $\vect{d}_{ij}$ to vanish.
Thus $\vect{d}_{ij}\in\Span(\vect{w}_i,\vect{w}_j)$.
Using $\vect{d}_{ji}=-\vect{d}_{ij}$, write
$\vect{d}_{ij}=a_{ij}\vect{w}_i-a_{ji}\vect{w}_j$ for $i\ne j$.
The coefficient of $\vect{w}_i\mathbin{\odot}\vect{w}_k$
then gives $a_{ij}=a_{kj}$ whenever $i,j,k$ are distinct.
Hence $a_{ij}$ depends only on $j$; setting $z_j=-a_{ij}$
gives the required formula.

If the $\vect{w}_i$ span a three-dimensional space, make an
invertible change in the index space so that
$\vect{w}_1,\vect{w}_2,\vect{w}_3$ are independent and
$\vect{w}_4=0$.
Both the Jacobi equations and the desired formula are
preserved under this change.
Complete $\vect{w}_1,\vect{w}_2,\vect{w}_3$ to a basis of
$\set{U}$ by a vector $\vect{u}$.
In the equation for $123$, the coefficients of
$\vect{u}\mathbin{\odot}\vect{w}_k$, for $k=1,2,3$,
first place $\vect{d}_{12},\vect{d}_{23},\vect{d}_{31}$
in $\Span(\vect{w}_1,\vect{w}_2,\vect{w}_3)$.
The square coefficients then place each vector in its
corresponding two-vector span.
The preceding coefficient comparison therefore gives
$z_1,z_2,z_3$ for these three pairs.
The equations for $ij4$ reduce to
$\vect{d}_{j4}\mathbin{\odot}\vect{w}_i
=\vect{d}_{i4}\mathbin{\odot}\vect{w}_j$.
Comparing coefficients shows that
$\vect{d}_{i4}=c\vect{w}_i$ for $i=1,2,3$, with one common
scalar $c$.
Set $z_4=-c$ and transform back to the original index basis. 
Consequently the symmetric matrices
$\widetilde{\matr{L}}_i=\left(\begin{smallmatrix}
\matr{L}_i&\vect{w}_i\\\vect{w}_i^{\mathsf T}&z_i
\end{smallmatrix}\right)$ commute on $\CC^6$.
Together with the identity they generate a commutative algebra
with cyclic vector $(\vect{e},0)^{\mathsf T}$: the first orbit vectors span $\set{V}$,
and some $\widetilde{\matr{L}}_i\widetilde{\matr{L}}_j(\vect{e},0)^{\mathsf T}$
has a nonzero last coordinate because $\matr{G}\ne0$.
As in \cref{lem:commuting-polars}, the orbit identification
supplies a six-dimensional Frobenius algebra.
Its cubic restricts to the original cubic on $\set{V}$.
By \cref{lem:small-algebra}(i), this proves $\bs(\tens{S})\le6$.

\emph{Invertible and three-dimensional images.}
If $\matr{C}$ is invertible, Chow's theorem gives either
$\matr{C}=\bigwedge\nolimits^2\matr{G}$ or
$\matr{C}=\matr{J}\bigwedge\nolimits^2\matr{G}$, after absorbing a scalar
into $\matr{G}$; see \cite[Theorem~3.6]{ghorpade2013grassmann}.
Here we use its complex-linear case for the Grassmannian of
two-planes in $\set{U}$: no nontrivial field automorphism occurs,
and every nonzero complex scalar has a square root.
In the first case, symmetry implies
$\matr{G}^{\mathsf T}=\matr{G}$ or
$\matr{G}^{\mathsf T}=-\matr{G}$.
The latter is impossible, since
$\operatorname{tr}(\matr{J}\bigwedge\nolimits^2\matr{G})
=-4\operatorname{Pf}(\matr{G})\ne0$ for an invertible skew matrix.
In the second case, symmetry and the complementary-minor identity
give $\bigwedge\nolimits^2(\matr{G}^2)=\det(\matr{G})\matr{I}$.
Thus $\matr{G}^2=\mu\matr{I}$ with $\mu^2=\det(\matr{G})\ne0$.
The eigenvalues are $\pm\sqrt\mu$, with even negative
multiplicity, so $\operatorname{tr}(\bigwedge\nolimits^2\matr{G})$ is
$6\mu$ or $-2\mu$. This contradicts
$\operatorname{tr}(\matr{J}\matr{C})=0$.
The symmetric exterior factor is therefore the only possibility.

If instead $\rank\matr{C}=3$ and its image is
$\bigwedge\nolimits^2\set{E}$ for a three-dimensional subspace
$\set{E}\subseteq\set{U}$, choose a full-column-rank matrix
$\matr{P}:\CC^3\to\set{U}$ with image $\set{E}$.
Symmetry gives
$\matr{C}=(\bigwedge\nolimits^2\matr{P})\matr{B}(\bigwedge\nolimits^2\matr{P})^{\mathsf T}$
for an invertible symmetric $3\times3$ matrix $\matr{B}$.
There is an invertible symmetric $\matr{G}_0$ with
$\bigwedge\nolimits^2\matr{G}_0=\matr{B}$: under complementary-pair
identification this is $\det(\matr{G}_0)\matr{G}_0^{-1}=\matr{B}'$
for a symmetric $\matr{B}'$, solved by
$\matr{G}_0=t(\matr{B}')^{-1}$ with $t^2=\det\matr{B}'$.
Then $\matr{G}=\matr{P}\matr{G}_0\matr{P}^{\mathsf T}$ is symmetric
of rank three and $\bigwedge\nolimits^2\matr{G}=\matr{C}$.
The preceding construction again applies.

\emph{A commuting hyperplane.}
If $\matr{C}=0$, \cref{lem:commuting-polars} gives
$\bs(\tens{S})=5$.
In every remaining case there is a nonzero $\vect{v}\in\set{U}$ with
$\operatorname{im}\matr{C}\subseteq \vect{v}\wedge\set{U}$.
Indeed, a linear space of decomposable bivectors lies in
$\vect{v}\wedge\set{U}$ or in $\bigwedge\nolimits^2\set{E}$ with $\dim\set{E}=3$
\cite[Lemma~3.5]{ghorpade2013grassmann}; a subspace of the
second type of dimension at most two also has a common factor.
Symmetry now implies that $\matr{C}$ kills
$\bigwedge\nolimits^2(\vect{v}^\perp\cap\set{U})$.
Thus the operators $\matr{L}_{\vect{x}}$ commute for $\vect{x}\in\set{W}=\vect{v}^\perp
\subseteq\set{V}$. Their unital algebra $\set{D}$ has orbit
$\set{D}\vect{e}$ containing $\set{W}$, so this orbit is $\set{V}$ or $\set{W}$.

If $\set{D}\vect{e}=\set{V}$, let $\matr{A}_{\vect{x}}\in\set{D}$ be the unique
operator with $\matr{A}_{\vect{x}} \vect{e}=\vect{x}$.
The orbit identification gives a five-dimensional Frobenius
cubic $F_A$. For $\vect{y}\in\set{W}$, one has
$\matr{A}_{\vect{y}}=\matr{L}_{\vect{y}}$ and
$\matr{A}_{\vect{x}} \vect{y}=\matr{L}_{\vect{y}} \vect{x}=\matr{L}_{\vect{x}} \vect{y}$.
Hence the symmetric operator $\matr{L}_{\vect{x}}-\matr{A}_{\vect{x}}$ kills
$\set{W}$. Full cubic symmetry implies that $F_S-F_A$ is a scalar
multiple of the cube of the linear form $\vect{x}\mapsto\langle \vect{v},\vect{x}\rangle$.
By \cref{lem:small-algebra}(i), we obtain $\bs(F_S)\le5+1=6$.

Suppose $\set{D}\vect{e}=\set{W}$.
If $\langle \vect{v},\vect{v}\rangle\ne0$, the orthogonal splitting
$\set{V}=\set{W}\oplus\CC \vect{v}$ makes all terms with one $\vect{v}$
and two inputs in $\set{W}$ vanish.
The restriction to $\set{W}$ has commuting polars and an
invertible polar at $\vect{e}$, so its symmetric border rank is four
by \cref{lem:commuting-polars}.
The remaining cubic is the square of one linear form times
another, whose symmetric border rank is at most two.
Thus again $\bs(F_S)\le6$.

Finally suppose $\langle \vect{v},\vect{v}\rangle=0$.
Give $\set{A}=\set{W}$ the product $\vect{a}\vect{b}=\matr{L}_{\vect{a}}\vect{b}$ and unit $\vect{e}$.
It is commutative and associative, since
$(\vect{a}\vect{b})\vect{c}=\matr{L}_{\vect{c}}\matr{L}_{\vect{a}}\vect{b}
=\matr{L}_{\vect{a}}\matr{L}_{\vect{c}}\vect{b}=\vect{a}(\vect{b}\vect{c})$.
The functional $\lambda(\vect{a})=\langle \vect{e},\vect{a}\rangle$ satisfies
$\lambda(\vect{a}\vect{b})=\langle \vect{a},\vect{b}\rangle$, whose radical on $\set{W}$
is $\CC \vect{v}$.
Choose $\vect{u}\notin\set{W}$ and define
\[
\eta(\vect{a})=\langle \vect{u},\vect{a}\rangle,\ \ 
\mu(\vect{a})=\tens{S}(\vect{u},\vect{u},\vect{a}),\ \ 
c=\tens{S}(\vect{u},\vect{u},\vect{u}).
\]
Then $\eta(\vect{v})\ne0$, and the cubic on $\set{W}\oplus\CC \vect{u}$
has exactly the form \eqref{eq:corank-cubic}.
Part (ii) of \cref{lem:small-algebra} gives the final bound.
\end{proof}

\subsection{Sharp partner bounds and the third main theorem}
\begin{proposition}[Symmetric partner bounds]\label{prop:symmetric-partner}
Let $\tens{S}$ be a symmetric cubic tensor with $d(\tens{S})\le5$.
Then $\br(\tens{S})=\bs(\tens{S})\le8$.
If this common value is at most seven, it equals $\kappa(\tens{S})$.
If it is eight, then $d(\tens{S})=5$ and
$\rank\matr{K}_{\CC^5}^{2}(\tens{S})=45$ on its essential space.
\end{proposition}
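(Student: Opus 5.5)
We argue by cases on the essential dimension $d=d(\tens{S})$ and on the existence of an invertible polar, and we pass to the essential support throughout. This does not change $\br$, $\bs$ or $\kappa$. The general shape is a matching pair of bounds: an upper bound on $\bs(\tens{S})$ and a lower bound $\kappa(\tens{S})\le\br(\tens{S})$ that meet.

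\emph{Small dimension.} For $d\le4$ we use Seigal's equality $\br=\bs$ for $d\le4$ \cite{seigal2020ranks}, together with the classical bound on symmetric border rank of quaternary cubics, which is at most six. Sharpness of $\kappa$ would be checked on the orbit normal forms of ternary and quaternary cubics, via three-slice Koszul ranks. For $d\le3$ the flattening and Koszul bounds are classical. For $d=4$ we use generic three-dimensional compressions of the first factor. This step is bookkeeping on known classifications rather than new mathematics.

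\emph{Dimension five with an invertible polar.} Here $\kappa(\tens{S})\ge5$, because the flattening rank is $5$ and the block $\matr{K}(\matr{I},\matr{L}_{\vect{x}},\matr{L}_{\vect{y}})$ already has rank $10$. The block elimination identity in the proof of \cref{prop:six-cubes} gives
\[
\rank\matr{K}=10+\rank[\matr{L}_{\vect{x}},\matr{L}_{\vect{y}}].
\]
We then split on the commutators. If all commutators vanish, \cref{lem:commuting-polars} gives $\br=\bs=5=\kappa$. If every commutator has rank at most two but some is nonzero, then $\kappa=6$, since the Koszul matrix has rank $12$. In that case \cref{prop:six-cubes} gives $\bs\le6$, so $\br=\bs=6=\kappa$. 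If some commutator has rank $3$ or $4$ (it is skew, so the rank is even; rank $4$ gives Koszul rank $14$), then $\kappa\ge7$. For the upper bound we need $\bs\le7$. We would try to get it by subtracting a cube $\lambda\vect{\ell}^3$ chosen to make some commutator rank drop, reducing to the previous case with $\bs\le6$. Failing that, we fall back on the generic-rank bound of quinary cubics, which is $8$, and on the $\matr{K}^2_{\CC^5}$ map. The $10\times10$ map $\matr{K}^2_{\CC^5}$ has rank at most $45$ on $\binom{4}{2}\cdot\ldots$ spaces of size $50$. If its rank is $45$, the divisor-six bound gives $\br\ge\lceil45/6\rceil=8$. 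If its rank is $\le42$, we would aim to show $\bs\le7$ by a degeneration argument.

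\emph{Dimension five, every polar singular.} In this case the Hessian determinant vanishes identically. By Gordan--Noether for $n\le4$ variables plus one cone variable (the quinary exceptional cubics), $\tens{S}$ is either a cone or one of the Perazzo-type cubics. Cones reduce to $d\le4$, which contradicts conciseness. For the Perazzo forms $x_0q_0+x_1q_1+x_2q_2$ we compute $\kappa$ and $\bs$ directly.

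\emph{Main obstacle.} We expect the main obstacle to be the upper bound $\bs\le7$ in the case of a rank-four commutator, together with identifying exactly when the value is $8$ with $\matr{K}^2$ of rank $45$. We would first try extending the commuting construction of \cref{prop:six-cubes} to a seven-dimensional Frobenius algebra. The point is that \cref{lem:small-algebra}(i) only covers $t\le6$, so smoothability in length $7$ (Gorenstein, still irreducible by Casnati's result) would have to be invoked instead.
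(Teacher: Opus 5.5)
Your proposal has a genuine gap in the five-variable case with a rank-four commutator, which is exactly where the value eight lives. There $\kappa(\tens{S})=7$, and you set out to prove $\bs(\tens{S})\le7$. That target is false in general. By the Alexander--Hirschowitz exception, a general quinary cubic has $\bs=8$, and it has an invertible polar. By \cref{prop:six-cubes} it cannot have $\kappa\le6$, so it falls precisely into your rank-four case.

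Your fallbacks do not close this. The degeneration argument for $\rank\matr{K}^2_{\CC^5}\le42$ is unspecified, and it leaves ranks $43$ and $44$ untreated. A seven-dimensional Frobenius algebra cannot give $\bs\le7$ for cubics whose border rank is eight.

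The missing ingredient is Ottaviani's theorem. After quotienting $\matr{K}^2_{\CC^5}$ by its fixed five-dimensional kernel and restricting to its fixed $45$-dimensional image space, the resulting $45\times45$ determinant is a nonzero constant times the cube of the degree-$15$ equation of $\sigma_7(v_3(\mathbb P^4))$. With this in hand, split on the value of $\bs$ rather than on the commutator rank:
\begin{itemize}
\item If $\bs\le7$ and the polars do not commute, then $6\le\kappa\le\br\le\bs\le7$, and the subcase $\kappa=6$ is closed by \cref{prop:six-cubes}.
\item If $\bs=8$, the cubic lies off $\sigma_7$, so the determinant is nonzero. Hence $\rank\matr{K}^2_{\CC^5}(\tens{S})=45$ and $\br\ge\lceil45/6\rceil=8$.
\end{itemize}
This route never needs an upper bound $\bs\le7$ in the rank-four case.

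Your treatment of $d\le4$ is also not a proof as it stands. Checking $\kappa$ on orbit normal forms is unavailable for quaternary cubics, since they have four-dimensional moduli. Your bound of six is also weaker than the true value of five. You do not need normal forms, because the argument you already use in dimension five works uniformly:
\begin{itemize}
\item Fix an invertible polar $\matr{Q}$. If all the operators commute, \cref{lem:commuting-polars} gives $\br=\bs=d$, attained by the slices $(\matr{Q},0,0)$.
\item Otherwise, $\rank\matr{K}(\matr{Q},\matr{P},\matr{R})=2d+\rank(\matr{P}\matr{Q}^{-1}\matr{R}-\matr{R}\matr{Q}^{-1}\matr{P})$ forces $\kappa\ge d+1$. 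Alexander--Hirschowitz gives $\bs\le d+1$ for $d\le4$, so all values equal $d+1$.
\end{itemize}
This makes the appeal to Seigal unnecessary. You must, however, exclude the absence of an invertible polar when $d\le4$. That is Gordan--Noether (no concise cubic with vanishing Hessian in at most four variables), which you invoke only for $d=5$.

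Finally, in the vanishing-Hessian case with $d=5$, ``compute $\bs$ directly'' hides a nontrivial input. The cubic $x_0u^2+2x_1uv+x_2v^2$ is a wild form, and its upper bound $\bs\le5$ is a theorem of Huang--Micha\l ek--Ventura, not a routine computation. The matching lower bound comes from an explicit three-slice Koszul matrix of rank ten.
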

\begin{proof}
Pass to the essential space and write $d=d(\tens{S})$.
The zero tensor is immediate. The Alexander--Hirschowitz theorem
gives $\bs(\tens{S})\le d+1$ for $d\le4$, and
$\bs(\tens{S})\le8$ for $d=5$
\cite[Theorem~1.2]{brambilla2008alexander}.

First suppose there is an invertible polar $\matr{Q}$.
For any two other polars $\matr{P},\matr{R}$, block elimination gives
\[
\rank\matr{K}(\matr{Q},\matr{P},\matr{R})
=2d+\rank(\matr{P}\matr{Q}^{-1}\matr{R}
-\matr{R}\matr{Q}^{-1}\matr{P}).
\]
The last matrix is skew-symmetric, so a nonzero commutator gives
$\kappa(\tens{S})\ge d+1$.
If all commutators vanish, \cref{lem:commuting-polars} gives
$\br(\tens{S})=\bs(\tens{S})=d$, attained by the three slices
$(\matr{Q},0,0)$.
For $d\le4$, these alternatives and the preceding upper bound
prove all claims.
For $d=5$, each three-slice matrix is skew-symmetric of size
$15$, so $\kappa(\tens{S})\le7$.
If $\bs(\tens{S})\le7$ and the polars do not commute, then
$\kappa(\tens{S})\ge6$.
When $\kappa(\tens{S})=6$, \cref{prop:six-cubes} forces
$\bs(\tens{S})=6$; otherwise
$\kappa(\tens{S})=\br(\tens{S})=\bs(\tens{S})=7$.

The rank-eight case follows from Ottaviani's equation for the
seventh secant variety of the cubic Veronese variety in
$\mathbb P^4$. The full Koszul map vanishes on a fixed
five-dimensional subspace of its domain and takes values in a
fixed $45$-dimensional subspace of its target. Passing to the
corresponding quotient and subspace gives a $45\times45$ map
with the same rank as $\matr{K}_{\CC^5}^{2}(\tens{S})$.
Its determinant is a nonzero constant times the cube of the
defining equation
\cite[Theorem~1.1 and Lemma~3.1]{ottaviani2009invariant}.
Thus $\bs(\tens{S})=8$ gives Koszul rank $45$, and hence
$\br(\tens{S})\ge\lceil45/6\rceil=8$.

It remains to treat cubics with no invertible polar, equivalently
with identically vanishing Hessian. In at most four essential
variables no such nonzero concise cubic exists; see
\cite[Introduction]{gondim2015hessian}.
For five variables, first exclude a reducible concise cubic $\ell Q$.
Write $Q(\vect{x})=\vect{x}^{\mathsf T}\matr{A}\vect{x}$ with
$\matr{A}$ symmetric, and define $\rank Q=\rank\matr{A}$.
Conciseness forces $\rank Q\ge4$.
In rank four the cubic is equivalent to $x_0\sum_{i=1}^4x_i^2$,
which has nonzero Hessian determinant.
In rank five, write $\ell(\vect{x})=\vect{a}^{\mathsf T}\vect{x}$.
Direct differentiation gives
\[
\det\operatorname{Hess}(\ell Q)
=2^5\det(\matr{A})\ell^3
(4\ell^2-(\vect{a}^{\mathsf T}\matr{A}^{-1}\vect{a})Q),
\] 
which is nonzero.
Therefore the cubic is irreducible, and
\cite[Theorem~4.2]{gondim2015hessian} identifies it, up to
an invertible change of variables, with
$P=x_0u^2+2x_1uv+x_2v^2$.
Its symmetric border rank is five by
\cite[Proposition~7.5, with $n=4$]{huang2020wild}.
For an explicit three-slice bound, take the tensor of $3P$ and
the polars in the $x_0,u,v$ directions. With coordinates
$(x_0,x_1,x_2,u,v)$ numbered from zero, their matrices are
\[
\matr{M}_0=\matr{E}_{33},\ \ 
\matr{M}_1=\matr{E}_{03}+\matr{E}_{30}+\matr{E}_{14}+\matr{E}_{41},\ \ \matr{M}_2=\matr{E}_{13}+\matr{E}_{31}+\matr{E}_{24}+\matr{E}_{42},
\]
where $\matr{E}_{ij}$ denotes a matrix unit.
The kernel of \eqref{eq:koszul-matrix} consists of triples
\[
((a,0,0,0,0),(b,c,d,a,0),(c,d,f,0,a)).
\] 
It has dimension five, so the matrix rank is ten.
Thus $\kappa(\tens{S})=\br(\tens{S})=\bs(\tens{S})=5$,
which completes the remaining case.
\end{proof}

\begin{proof}[Proof of \cref{thm:cubic-partners}]
Under condition \textup{(i)}, \cref{prop:small-border-partner}
and the common matrix bound in \cref{sec:cubic-bounds} give
$\br(\tens{H}\oplus\tens{T})\ge\rho+\br(\tens{T})$.
Subadditivity gives equality.
Under condition \textup{(ii)}, \cref{prop:symmetric-partner}
gives $\br(\tens{T})=\bs(\tens{T})$.
If this value is at most seven, the same three-slice argument
applies. If it is eight, \cref{prop:five-slice} gives
\[
\br(\tens{H}\oplus\tens{T})\ge
\rho+\lceil(45-2)/6\rceil=\rho+8,
\]
again matching subadditivity.
Finally, if $\tens{T}$ in condition \textup{(i)} is symmetric,
then $d(\tens{T})\le\br(\tens{T})\le3$, so condition
\textup{(ii)} also holds.
In every symmetric case, \cref{cor:border-symmetric} now proves
the claimed symmetric equality.
\end{proof}

\section{Consequences and examples}
\label{sec:conclusion}

We first derive finite-sum consequences of the main results and
compare exact rank with border rank for complex Hankel tensors.
We then use a single family of examples to illustrate these
conclusions and the sharpness of the restriction bound. 

\subsection{Finite direct sums}
\begin{corollary}[Finite Hankel sums]
\label{cor:finite}
Let $\tens{H}_1,\ldots,\tens{H}_k$ be complex
Hankel tensors of the same order $m\ge3$,
and let $\tens{T}$ be any complex tensor of that order.
Then
\[
 r\left(\tens{T}\oplus\bigoplus_{\nu=1}^k\tens{H}_\nu\right)
 =r(\tens{T})+\sum_{\nu=1}^k r(\tens{H}_\nu).
\]
For a symmetric partner $\tens{S}$ with $\Delta_c(\tens{S})=0$,
the corresponding sum has symmetric rank
$s(\tens{S})+\sum_{\nu=1}^k s(\tens{H}_\nu)$.
For the Hankel summands alone, both border ranks satisfy
\begin{equation}\label{eq:finite-border}
\br\left(\bigoplus_{\nu=1}^k\tens{H}_\nu\right)
=\bs\left(\bigoplus_{\nu=1}^k\tens{H}_\nu\right)
=\sum_{\nu=1}^k\rho(\tens{H}_\nu).
\end{equation}
\end{corollary}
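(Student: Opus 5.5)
The CP-rank identity will follow by induction on $k$, using \cref{thm:main} at each step. The key point is that \cref{thm:main} puts no restriction on the partner $\tens{T}$: it only needs one Hankel summand of order $m$. The case $k=0$ is trivial. For the inductive step, set $\tens{T}_{k-1}=\tens{T}\oplus\bigoplus_{\nu<k}\tens{H}_\nu$. This is a complex tensor of order $m$, and after reordering the factor blocks (a permutation of coordinates in each mode, which preserves CP rank) we have $\tens{T}\oplus\bigoplus_{\nu\le k}\tens{H}_\nu=\tens{H}_k\oplus\tens{T}_{k-1}$. \Cref{thm:main} then gives $r(\tens{H}_k\oplus\tens{T}_{k-1})=r(\tens{H}_k)+r(\tens{T}_{k-1})$, and the induction hypothesis expands $r(\tens{T}_{k-1})$.

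For the symmetric statement I will induct in the same way, but I must carry the invariant $\Delta_c(\tens{S}_{k-1})=0$, where $\tens{S}_{k-1}=\tens{S}\oplus\bigoplus_{\nu<k}\tens{H}_\nu$. This holds for $k=1$ by hypothesis. \Cref{cor:symmetric}, applied with $\tens{H}=\tens{H}_k$ and partner $\tens{S}_{k-1}$, yields $s(\tens{H}_k\oplus\tens{S}_{k-1})=s(\tens{H}_k)+s(\tens{S}_{k-1})$. It also yields $r=s$ for the new sum, which is exactly $\Delta_c(\tens{S}_k)=0$, so the induction continues. The direct sum of symmetric tensors, with blocks reordered, is again symmetric, and the block permutation acts identically on all modes, so symmetric rank is preserved as well.

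For \eqref{eq:finite-border}, the upper bound comes from \eqref{eq:border-upper} together with subadditivity of $\bs$: we get $\br\le\bs\le\sum_\nu\rho(\tens{H}_\nu)$. For the lower bound I will split by parity of $m$. For $m=2q$, the $q\mid q$ flattening of the direct sum is block diagonal with blocks the individual flattenings, so by \cref{prop:border-certificate} its rank is $\sum_\nu\rho(\tens{H}_\nu)$. Matrix rank of a flattening bounds border rank from below, which closes this case. For $m=2q+1$, I will apply \cref{lem:joint-koszul}, which is stated for finitely many summands, to the $1,q,q$ compressions $\tens{H}'_\nu$ supplied by \cref{prop:border-certificate}. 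Each satisfies $\rank\matr{K}(\tens{H}'_\nu)=2\rho(\tens{H}_\nu)$, so the lemma gives $\br\ge\sum_\nu\rho(\tens{H}_\nu)$.

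Zero summands contribute zero throughout. There is no real obstacle here. The only points needing care are that block reorderings preserve all ranks involved, and that the Comon-gap invariant is propagated in the symmetric induction, which \cref{cor:symmetric} delivers directly.
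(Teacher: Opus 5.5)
Your proposal is correct and follows the paper's proof: you iterate \cref{thm:main} with the remaining sum as the unrestricted partner, and for border rank you combine \eqref{eq:border-upper} and subadditivity with the block-diagonal balanced flattening ($m$ even) or with \cref{lem:joint-koszul,prop:border-certificate} ($m$ odd). The only variation is in the symmetric part: you propagate $\Delta_c=0$ through repeated use of \cref{cor:symmetric}, whereas the paper observes directly that the CP identity gives $r(\tens{S}\oplus\bigoplus_\nu\tens{H}_\nu)=s(\tens{S})+\sum_\nu s(\tens{H}_\nu)$ as a lower bound for the symmetric rank, matched by concatenation; both arguments are valid.
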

\begin{proof}
Apply \cref{thm:main} successively, treating
the sum of the remaining components as the
unrestricted partner.
In the symmetric case the CP lower bound equals
the sum of the symmetric ranks, and concatenation
gives the matching upper bound.
For border rank, \eqref{eq:border-upper} gives the upper bound.
The lower bound is the sum of the balanced flattening ranks
when $m$ is even, and follows from
\cref{lem:joint-koszul,prop:border-certificate} when $m$ is odd.
\end{proof}

The exact-rank assertion is obtained by iteration because
\cref{thm:main} permits an arbitrary partner. The border assertion
uses the joint matrix construction instead: an intermediate
direct sum need not itself be Hankel.
All CP-rank conclusions persist under invertible changes
in the individual modes. The symmetric conclusions persist
when the same invertible map is used in every mode.

\subsection{Exact rank versus border rank}
\begin{corollary}[The rank difference]\label{cor:rank-difference}
Let $\tens{H}\ne\tens{0}$ belong to $\set{H}_m(n)$ with $m\ge3$.
Use the apolar parameters $\rho,\sigma$ from \cref{sec:moment}
and put $D=m(n-1)$. Then
\begin{equation}\label{eq:rank-difference}
r(\tens{H})-\br(\tens{H})=s(\tens{H})-\bs(\tens{H})
=\min\{D-2\rho+2,\ (m-2)(\rho-\sigma)\}.
\end{equation}
Consequently, exact rank equals border rank if and only if
\begin{equation}\label{eq:rank-border-equality}
D=2\rho-2\ \text{or}\ \sigma=\rho.
\end{equation}
\end{corollary}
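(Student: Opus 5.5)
The plan is to combine the exact-rank formula of \cref{prop:upper} with the border-rank formula of \cref{cor:hankel-border-ranks} and simplify. By \cref{prop:upper}, $r(\tens{H})=s(\tens{H})=U$. By \cref{cor:hankel-border-ranks}, $\br(\tens{H})=\bs(\tens{H})=\rho$. So both differences in \eqref{eq:rank-difference} equal $U-\rho$, and it remains to express $U-\rho$ in the stated form.

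Subtracting $\rho$ from each entry of \eqref{eq:U}, the second entry gives $(m-1)\rho-(m-2)\sigma-\rho=(m-2)(\rho-\sigma)$ directly. For the first entry, split according to \cref{lem:moment}.
\begin{enumerate}
\item If $\rho>n$, then $b=n$ and $a=\rho-n$. As noted after \cref{prop:upper}, the first entry is $m(n-1)-\rho+2=D-\rho+2$, so subtracting $\rho$ gives $D-2\rho+2$.
\item If $\rho\le n$, then $b=\rho$ and $a=0$. The first entry is $(m-1)\rho-(m-2)$, and the paper already observed that the second entry is no larger, so $U-\rho=(m-2)(\rho-\sigma)$.
\end{enumerate}
In case (ii) I must still check that $D-2\rho+2\ge(m-2)(\rho-\sigma)$, so that the minimum in \eqref{eq:rank-difference} is not changed. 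Since $\sigma\ge1$, it suffices to show $m(n-1)-2\rho+2\ge(m-2)(\rho-1)$, that is, $m(n-1)\ge m(\rho-1)$. This holds because $\rho\le n$.

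The scalar case $n=1$ needs a separate check. There $D=0$ and $\rho=\sigma=1$, so the right side of \eqref{eq:rank-difference} is $\min\{0,0\}=0$, which is consistent with $r=\br=1$.

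For \eqref{eq:rank-border-equality}, both entries of the minimum are nonnegative. The first is nonnegative by the apolar bound $\rho\le\lfloor D/2\rfloor+1$, and the second because $m\ge3$ and $\sigma\le\rho$. Hence the minimum vanishes if and only if one entry vanishes. This happens exactly when $D=2\rho-2$ or, since $m-2>0$, when $\sigma=\rho$.

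The only delicate point is the case bookkeeping relating $b$, $a$ and $n$, in particular verifying that the first entry never undercuts the second when $\rho\le n$. Everything else is direct substitution into earlier results.
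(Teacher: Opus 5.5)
Your proposal is correct and follows the paper's proof. Like the paper, you combine \cref{prop:upper} with \cref{cor:hankel-border-ranks}, subtract $\rho$, and derive the equivalence from the nonnegativity of both entries together with $m-2>0$; your explicit check that $D-2\rho+2\ge(m-2)(\rho-\sigma)$ when $\rho\le n$ only spells out the comparison the paper records after \cref{prop:upper}.
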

\begin{proof}
In the original dimension, \cref{prop:upper} reads
$$r(\tens{H})=s(\tens{H})
=\min\{D-\rho+2,(m-1)\rho-(m-2)\sigma\}.$$
Subtract the value $\rho$ supplied by
\cref{cor:hankel-border-ranks}.
Both entries of the resulting minimum are nonnegative,
and $m-2>0$, proving the equivalence.
The scalar convention in \cref{sec:moment} gives the same formulas.
\end{proof}

% \begin{remark}[Balanced degree and repeated roots]
% This is a consequence of the exact formula
% \cite[Theorem~1.1]{holden2026exact} and the border theorem,
% with no additional decomposition argument.
% The first condition in \eqref{eq:rank-border-equality} is the
% balanced apolar case; the second says that the chosen minimum
% apolar polynomial is squarefree. In the balanced case the choice
% of that polynomial need not be unique, but the first entry in
% \eqref{eq:rank-difference} is already zero, so the answer is
% independent of the choice.
% \end{remark}

\subsection{Examples from one moment family}
Let $m\ge3$ and $n\ge2$. For an integer
$1\le\rho\le\lfloor m(n-1)/2\rfloor+1$, define
\begin{equation}\label{eq:antidiagonal-family}
H^{(\rho)}_{i_1\ldots i_m}=
\begin{cases}
1,&i_1+\cdots+i_m=\rho-1,\\
0,&\text{otherwise},
\end{cases}
\ \  0\le i_1,\ldots,i_m<n.
\end{equation}
Its middle binary catalecticant has an antidiagonal identity
block of size $\rho$ and zeros elsewhere, so its rank is
$\rho$. Moreover, the polynomial $g_H=t^\rho$ is apolar and is therefore of
minimum degree. Thus $\sigma=1$ and $b=\min(n,\rho)$.
Put $D=m(n-1)$. Then \cref{prop:upper,cor:hankel-border-ranks} give
\begin{equation}\label{eq:antidiagonal-ranks}
r(\tens{H}^{(\rho)})=s(\tens{H}^{(\rho)})
=\min\{D-\rho+2,(m-1)\rho-(m-2)\},
\ \  \br(\tens{H}^{(\rho)})=\bs(\tens{H}^{(\rho)})=\rho.
\end{equation}
The following examples are direct specializations of this calculation.

\begin{example}[An unbounded rank difference]\label{ex:large-difference}
Set $m=3$ and $\rho=n$ in \eqref{eq:antidiagonal-family}, so
$H^{(n)}_{ijk}=1$ exactly when $i+j+k=n-1$.
By \eqref{eq:antidiagonal-ranks}, we have
$$r(\tens{H}^{(n)})=s(\tens{H}^{(n)})=2n-1,\ \ \br(\tens{H}^{(n)})=\bs(\tens{H}^{(n)})=n.$$ 
The difference is $n-1$, whereas \cref{thm:main} still gives
$\Delta_r(\tens{H}^{(n)},\tens{T})=0$ for every complex third-order tensor
$\tens{T}$. Thus the difference between exact rank and border
rank is distinct from the Strassen CP rank gap.
\end{example}

\begin{example}[The coefficient $m-1$ is sharp]\label{ex:sharp}
For $n=\rho=2$, the tensor in \eqref{eq:antidiagonal-family} is
\[
\tens{H}^{(2)}=\sum_{i=1}^m
\vect{e}_0^{\otimes(i-1)}\otimes\vect{e}_1
\otimes\vect{e}_0^{\otimes(m-i)}.
\]
Its moment parameters are $(b,\rho,a,\sigma)=(2,2,0,1)$, and
\eqref{eq:antidiagonal-ranks} gives
$r(\tens{H}^{(2)})=s(\tens{H}^{(2)})=m$ and
$\br(\tens{H}^{(2)})=\bs(\tens{H}^{(2)})=2$.
Restricting one input to $\CC\vect{e}_1^*$ leaves a tensor of
CP rank one. A codimension-one restriction therefore loses
exactly $m-1$ in CP rank, showing that the coefficient in
\cref{prop:restriction} cannot be decreased.
\end{example}

\begin{example}[Compressed nonreduced moment models]\label{ex:compressed}
Suppose $n<\rho\le\lfloor m(n-1)/2\rfloor+1$ in
\eqref{eq:antidiagonal-family}.
The input space $\CC[t]_{<n}$ is then a proper subspace of
$\CC[t]/(t^\rho)$.
The second entry of the minimum in \eqref{eq:antidiagonal-ranks}
exceeds the first by $m(\rho-n)$; hence
$$r(\tens{H}^{(\rho)})=s(\tens{H}^{(\rho)})=m(n-1)-\rho+2$$ 
and
$\br(\tens{H}^{(\rho)})=\bs(\tens{H}^{(\rho)})=\rho$.
Both direct-sum theorems therefore apply with a compressed
input space and a repeated apolar root.
For $m=5$ and $n=3$ with $\rho=6$, the usual $2\mid3$ flattening
has rank $5$, whereas the compressed Koszul matrix has rank
$12$ and certifies border rank $6$.
\end{example}

\section{Conclusions}
\label{sec:outlook}

We have established CP rank additivity for a complex Hankel tensor
and an arbitrary complex tensor of the same order, for all orders
at least three. We have also shown that ordinary and symmetric
border ranks coincide and are additive for direct sums of
complex Hankel tensors. In order three, border-rank additivity
also holds for arbitrary partners of border rank at most three
and for symmetric partners with at most five essential variables.
These sufficient conditions leave the case of an unrestricted
partner open. 
This leads to the following question,
which, as explained in \cref{subsec:bord_rank_part}, includes the problem
posed in \cite[Problem~2]{buczynski2020additivity} as a special case.

\begin{problem}\label{prob:border-additivity}
Let $m\ge3$ and $\tens{H}\in\set{H}_m(n)$.
Does $\br(\tens{H}\oplus\tens{T})
=\br(\tens{H})+\br(\tens{T})$ hold for every complex tensor
$\tens{T}$ of order $m$?
\end{problem}

\noindent\textbf{Acknowledgments.} 
The author acknowledges the assistance of ChatGPT 6 Astra throughout the mathematical development and preparation of this manuscript. 
The author takes full responsibility for the content of this paper. 
This work was supported by the Guangdong Basic and Applied Basic
Research Foundation (2026A1515050001, 2026A1515010188).

\bibliographystyle{siamplain}
\bibliography{ReferenceTensor}

\begin{thebibliography}{10}

\bibitem{barrilli2026eigensolvers}
{\sc E.~Barrilli and B.~Mourrain}, {\em Eigensolvers for polynomial roots and
  tensor decomposition}, 2026, \url{https://arxiv.org/abs/2608.19818v2}.
\newblock Preprint, arXiv:2608.19818v2.

\bibitem{bechere2026rational}
{\sc M.~Bechere, S.~Kuhlmann, and B.~Mourrain}, {\em Symmetric tensor
  decomposition on rational varieties}, 2026,
  \url{https://arxiv.org/abs/2606.25712v3}.
\newblock Preprint, arXiv:2606.25712v3.

\bibitem{beck2017additive}
{\sc V.~Beck and C.~Lecouvey}, {\em Additive combinatorics methods in
  associative algebras}, Confluentes Mathematici, 9 (2017), pp.~3--27.

\bibitem{bernardi2026hankel}
{\sc A.~Bernardi, J.~Jelisiejew, and O.~{Reig Fit{\'e}}}, {\em {Hankel} and
  multiplication tensor completions for cactus rank}, 2026,
  \url{https://arxiv.org/abs/2606.30600v1}.
\newblock Preprint, arXiv:2606.30600v1.

\bibitem{bini1979approximate}
{\sc D.~Bini, M.~Capovani, F.~Romani, and G.~Lotti}, {\em {$O(n^{2.7799})$}
  complexity for {$n\times n$} approximate matrix multiplication}, Information
  Processing Letters, 8 (1979), pp.~234--235.

\bibitem{bini1980approximate}
{\sc D.~Bini, G.~Lotti, and F.~Romani}, {\em Approximate solutions for the
  bilinear form computational problem}, SIAM Journal on Computing, 9 (1980),
  pp.~692--697.

\bibitem{blaser2023multilinear}
{\sc M.~Bl{\"a}ser, H.~Mayer, and D.~Shringi}, {\em On the multilinear
  complexity of associative algebras}, in 40th International Symposium on
  Theoretical Aspects of Computer Science (STACS 2023), P.~Berenbrink,
  P.~Bouyer, A.~Dawar, and M.~M. Kant{\'e}, eds., vol.~254 of Leibniz
  International Proceedings in Informatics (LIPIcs), Dagstuhl, Germany, 2023,
  Schloss Dagstuhl -- Leibniz-Zentrum f{\"u}r Informatik, pp.~12:1--12:18,
  \url{https://doi.org/10.4230/LIPIcs.STACS.2023.12}.

\bibitem{brachat2010symmetric}
{\sc J.~Brachat, P.~Comon, B.~Mourrain, and E.~Tsigaridas}, {\em Symmetric
  tensor decomposition}, Linear Algebra and its Applications, 433 (2010),
  pp.~1851--1872.

\bibitem{brambilla2008alexander}
{\sc M.~C. Brambilla and G.~Ottaviani}, {\em On the {Alexander-Hirschowitz}
  theorem}, Journal of Pure and Applied Algebra, 212 (2008), pp.~1229--1251.

\bibitem{buczynski2014third}
{\sc J.~Buczy{\'n}ski and J.~M. Landsberg}, {\em On the third secant variety},
  Journal of Algebraic Combinatorics, 40 (2014), pp.~475--502.

\bibitem{buczynski2020additivity}
{\sc J.~Buczy{\'n}ski, E.~Postinghel, and F.~Rupniewski}, {\em On {Strassen}'s
  rank additivity for small three-way tensors}, SIAM Journal on Matrix Analysis
  and Applications, 41 (2020), pp.~106--133.

\bibitem{carlini2015symmetricstrassen}
{\sc E.~Carlini, M.~V. Catalisano, and L.~Chiantini}, {\em Progress on the
  symmetric {Strassen} conjecture}, Journal of Pure and Applied Algebra, 219
  (2015), pp.~3149--3157.

\bibitem{carlini2018ecomputability}
{\sc E.~Carlini, M.~V. Catalisano, L.~Chiantini, A.~V. Geramita, and Y.~Woo},
  {\em Symmetric tensors: rank, {Strassen}'s conjecture and
  {$e$}-computability}, Annali della Scuola Normale Superiore di Pisa, Classe
  di Scienze (5), 18 (2018), pp.~363--390,
  \url{https://doi.org/10.2422/2036-2145.201602_004}.

\bibitem{carlini2012monomials}
{\sc E.~Carlini, M.~V. Catalisano, and A.~V. Geramita}, {\em The solution to
  the {Waring} problem for monomials and the sum of coprime monomials}, Journal
  of Algebra, 370 (2012), pp.~5--14.

\bibitem{carlini2017waringloci}
{\sc E.~Carlini, M.~V. Catalisano, and A.~Oneto}, {\em {Waring} loci and the
  {Strassen} conjecture}, Advances in Mathematics, 314 (2017), pp.~630--662.

\bibitem{casarotti2020comon}
{\sc A.~Casarotti, A.~Massarenti, and M.~Mella}, {\em On {Comon}'s and
  {Strassen}'s conjectures}, Mathematics, 6 (2018),
  \url{https://doi.org/10.3390/math6110217}.
\newblock Article 217.

\bibitem{casnati2015gorenstein}
{\sc G.~Casnati, J.~Jelisiejew, and R.~Notari}, {\em Irreducibility of the
  {Gorenstein} loci of {Hilbert} schemes via ray families}, Algebra \& Number
  Theory, 9 (2015), pp.~1525--1570.

\bibitem{christandl2021border}
{\sc M.~Christandl, F.~Gesmundo, M.~Micha{\l}ek, and J.~Zuiddam}, {\em Border
  rank nonadditivity for higher order tensors}, SIAM Journal on Matrix Analysis
  and Applications, 42 (2021), pp.~503--527.

\bibitem{colbrook2026generic}
{\sc M.~J. Colbrook}, {\em Generic ranks of odd-order {Hankel} tensors from
  three compressed slices}, 2026,
  \url{https://github.com/ajt60gaibb/OpenProblemsInNLA/blob/e62f9db721430d40c94e103249bc79586d1d97b3/references/colbrook-recovered-tensors-2026-09-11/manuscripts/TR-13.tex}.
\newblock Repository manuscript, TR-13, Open Problems in Numerical Linear
  Algebra.

\bibitem{comon2008symmetric}
{\sc P.~Comon, G.~Golub, L.-H. Lim, and B.~Mourrain}, {\em Symmetric tensors
  and symmetric tensor rank}, SIAM Journal on Matrix Analysis and Applications,
  30 (2008), pp.~1254--1279.

\bibitem{de2008tensor}
{\sc V.~De~Silva and L.-H. Lim}, {\em Tensor rank and the ill-posedness of the
  best low-rank approximation problem}, SIAM Journal on Matrix Analysis and
  Applications, 30 (2008), pp.~1084--1127.

\bibitem{ding2015fast}
{\sc W.~Ding, L.~Qi, and Y.~Wei}, {\em Fast {Hankel} tensor--vector product and
  its application to exponential data fitting}, Numerical Linear Algebra with
  Applications, 22 (2015), pp.~814--832.

\bibitem{ding2017inheritance}
{\sc W.~Ding, L.~Qi, and Y.~Wei}, {\em Inheritance properties and
  sum-of-squares decomposition of {Hankel} tensors: Theory and algorithms}, BIT
  Numerical Mathematics, 57 (2017), pp.~169--190.

\bibitem{edmonds1968partition}
{\sc J.~Edmonds}, {\em Matroid partition}, in 50 Years of Integer Programming
  1958--2008: From the Early Years to the State-of-the-Art, Springer, Berlin,
  Heidelberg, 2010, pp.~199--217,
  \url{https://doi.org/10.1007/978-3-540-68279-0_7}.

\bibitem{feig1984directsum}
{\sc E.~Feig and S.~Winograd}, {\em On the direct sum conjecture}, Linear
  Algebra and its Applications, 63 (1984), pp.~193--219.

\bibitem{ghorpade2013grassmann}
{\sc S.~R. Ghorpade and K.~V. Kaipa}, {\em Automorphism groups of {Grassmann}
  codes}, Finite Fields and Their Applications, 23 (2013), pp.~80--102.

\bibitem{goemans2009union}
{\sc M.~X. Goemans}, {\em Lecture 12: Matroid intersection polytope and matroid
  union}.
\newblock MIT 18.438, Advanced Combinatorial Optimization, 2009,
  \url{https://math.mit.edu/~goemans/18438F09/lec12.pdf}.
\newblock October 27. Scribe: Chung Chan.

\bibitem{gondim2015hessian}
{\sc R.~Gondim and F.~Russo}, {\em On cubic hypersurfaces with vanishing
  {Hessian}}, Journal of Pure and Applied Algebra, 219 (2015), pp.~779--806.

\bibitem{hitchcock1927expression}
{\sc F.~L. Hitchcock}, {\em The expression of a tensor or a polyadic as a sum
  of products}, Journal of Mathematics and Physics, 6 (1927), pp.~164--189.

\bibitem{holden2026exact}
{\sc S.~Holden}, {\em Exact rank of complex {Hankel} tensors}, 2026,
  \url{https://github.com/ajt60gaibb/OpenProblemsInNLA/blob/5d79588e225376c81f1a664f3ec128a482bf7103/tensor-computations/TR-14/solution.tex}.
\newblock Repository manuscript, TR-14, Open Problems in Numerical Linear
  Algebra.

\bibitem{huang2020wild}
{\sc H.~Huang, M.~Micha{\l}ek, and E.~Ventura}, {\em Vanishing {Hessian}, wild
  forms and their border {VSP}}, Mathematische Annalen, 378 (2020),
  pp.~1505--1532, \url{https://doi.org/10.1007/s00208-020-02080-8}.

\bibitem{jaja1986validity}
{\sc J.~Ja’Ja’ and J.~Takche}, {\em On the validity of the direct sum
  conjecture}, SIAM Journal on Computing, 15 (1986), pp.~1004--1020.

\bibitem{kolda2009tensor}
{\sc T.~G. Kolda and B.~W. Bader}, {\em Tensor decompositions and
  applications}, SIAM {Review}, 51 (2009), pp.~455--500.

\bibitem{kruskal1977three}
{\sc J.~B. Kruskal}, {\em Three-way arrays: rank and uniqueness of trilinear
  decompositions, with application to arithmetic complexity and statistics},
  Linear Algebra and its Applications, 18 (1977), pp.~95--138.

\bibitem{landsberg2012tensors}
{\sc J.~M. Landsberg}, {\em Tensors: Geometry and Applications}, vol.~128 of
  Graduate Studies in Mathematics, American Mathematical Society, 2012.

\bibitem{landsberg2015koszul}
{\sc J.~M. Landsberg and G.~Ottaviani}, {\em New lower bounds for the border
  rank of matrix multiplication}, Theory of Computing, 11 (2015), pp.~285--298,
  \url{https://doi.org/10.4086/toc.2015.v011a011}.

\bibitem{li2026comonrankgap}
{\sc J.~Li}, {\em Upper bounds on the {Comon} rank gap of real symmetric
  third-order tensors}, arXiv preprint arXiv:2610.14002,  (2026),
  \url{https://arxiv.org/abs/2610.14002}.

\bibitem{luque2003hankel}
{\sc J.-G. Luque and J.-Y. Thibon}, {\em {Hankel} hyperdeterminants and
  {Selberg} integrals}, Journal of Physics A: Mathematical and General, 36
  (2003), pp.~5267--5292.

\bibitem{mandziuk2026symmetrization}
{\sc T.~Ma{\'n}dziuk and E.~Ventura}, {\em Symmetrization maps and minimal
  border rank {Comon}'s conjecture}, Selecta Mathematica, 32 (2026),
  \url{https://doi.org/10.1007/s00029-026-01182-6}.
\newblock Article 86.

\bibitem{mourrain2018moments}
{\sc B.~Mourrain}, {\em Polynomial--exponential decomposition from moments},
  Foundations of Computational Mathematics, 18 (2018), pp.~1435--1492.

\bibitem{nie2019hankel}
{\sc J.~Nie and K.~Ye}, {\em Hankel tensor decompositions and ranks}, SIAM
  Journal on Matrix Analysis and Applications, 40 (2019), pp.~486--516.

\bibitem{ottaviani2009invariant}
{\sc G.~Ottaviani}, {\em An invariant regarding {Waring}'s problem for cubic
  polynomials}, Nagoya Mathematical Journal, 193 (2009), pp.~95--110.

\bibitem{papy2005exponential}
{\sc J.-M. Papy, L.~De~Lathauwer, and S.~Van~Huffel}, {\em Exponential data
  fitting using multilinear algebra: the single-channel and multi-channel
  case}, Numerical Linear Algebra with Applications, 12 (2005), pp.~809--826.

\bibitem{qi2015hankel}
{\sc L.~Qi}, {\em {Hankel} tensors: Associated {Hankel} matrices and
  {Vandermonde} decomposition}, Communications in Mathematical Sciences, 13
  (2015), pp.~113--125.

\bibitem{rupniewski2024additivity}
{\sc F.~Rupniewski}, {\em Strassen's rank additivity for small tensors,
  including tensors of rank less or equal 7}, Linear Algebra and its
  Applications, 698 (2024), pp.~448--478.

\bibitem{schonhage1981partial}
{\sc A.~Sch{\"o}nhage}, {\em Partial and total matrix multiplication}, SIAM
  Journal on Computing, 10 (1981), pp.~434--455.

\bibitem{seigal2020ranks}
{\sc A.~Seigal}, {\em Ranks and symmetric ranks of cubic surfaces}, Journal of
  Symbolic Computation, 101 (2020), pp.~304--317.

\bibitem{shitov2019directsum}
{\sc Y.~Shitov}, {\em Counterexamples to {Strassen}'s direct sum conjecture},
  Acta Mathematica, 222 (2019), pp.~363--379.

\bibitem{shitov2023higher}
{\sc Y.~Shitov}, {\em Higher rank substitutions for tensor decompositions. {I}.
  {Direct} sum conjectures}, 2023, \url{https://rxiv.org/pdf/2307.0100v1.pdf}.
\newblock Preprint, viXra:2307.0100v1.

\bibitem{strassen1973division}
{\sc V.~Strassen}, {\em Vermeidung von {Divisionen}}, Journal f{\"u}r die reine
  und angewandte Mathematik, 264 (1973), pp.~184--202.

\bibitem{teitler2015sufficient}
{\sc Z.~Teitler}, {\em Sufficient conditions for {Strassen}'s additivity
  conjecture}, Illinois Journal of Mathematics, 59 (2015), pp.~1071--1085.

\bibitem{xu2016hankel}
{\sc C.~Xu}, {\em {Hankel} tensors, {Vandermonde} tensors and their
  positivities}, Linear Algebra and its Applications, 491 (2016), pp.~56--72,
  \url{https://doi.org/10.1016/j.laa.2015.02.012}.

\end{thebibliography}
\end{document}